\theoremstyle{plain}
\newtheorem{THM}{Theorem}[section]
\newtheorem{PROP}[THM]{Proposition}
\newtheorem{LEMMA}[THM]{Lemma}
\newtheorem{CLAIM}[THM]{Claim}
\newtheorem*{QUES}{Question}
\theoremstyle{definition}
\newtheorem{DFN}[THM]{Definition}
\newtheorem{REM}[THM]{Remark}
\newcommand{\FAMILY}[4]{ \mathcal{G}(#1,#2,#3,#4) }
\newcommand{\Ex}{\mathbb{E}}
\newcommand{\cP}{\mathcal{P}}
\newcommand{\BBE}{\mathbb{E}}
\DeclareMathOperator{\Bi}{Bi}
\title{Corr\'adi and Hajnal's theorem for sparse random graphs}
\date{}
\author{
  J{\'o}zsef Balogh\thanks{Department of Mathematics, University of Illinois, 1409 W Green Street, Urbana, IL~61801, USA; and Department of Mathematics, University of California, San Diego, 9500 Gilman Drive, La Jolla, CA~92093, USA. E-mail address: jobal@math.uiuc.edu. This material is based upon work supported by NSF
CAREER Grant DMS-0745185, and OTKA Grant K76099.}
  \and
  Choongbum Lee\thanks{Department of Mathematics, UCLA, Los Angeles, CA~90095, USA. E-mail address: choongbum.lee@gmail.com. Research supported in part by Samsung Scholarship.}
  \and
  Wojciech Samotij\thanks{School of Mathematical Sciences, Tel Aviv University, Tel Aviv~69978, Israel; and Trinity College, Cambridge~CB2~1TQ, UK. E-mail address: samotij@post.tau.ac.il. Research supported in part by ERC Advanced Grant DMMCA.}
}
\begin{document}

\maketitle

\begin{abstract}
In this paper we extend a classical theorem of Corr{\'a}di and Hajnal into the setting of sparse random graphs.
We show that if $p(n) \gg (\log n / n)^{1/2}$, then asymptotically almost surely every subgraph of $G(n,p)$ with minimum degree at least $(2/3 + o(1))np$ contains a triangle packing that covers all but at most $O(p^{-2})$ vertices. Moreover, the assumption on $p$ is optimal up to the $(\log n)^{1/2}$ factor and the presence of the set of $O(p^{-2})$ uncovered vertices is indispensable.
The main ingredient in the proof, which might be of independent interest, is an embedding theorem which says
that if one imposes certain natural regularity conditions
on all three pairs in a balanced $3$-partite graph, then this graph contains a perfect triangle packing.
\end{abstract}

\section{Introduction}

\subsection{Triangle packings in subgraphs of random graphs}

Let $H$ be a fixed graph on $h$ vertices, let $G$ be a graph on $n$ vertices. An arbitrary collection of vertex-disjoint copies of $H$ in $G$ is called an \emph{$H$-packing} in $G$. A \emph{perfect $H$-packing} (an \emph{$H$-factor}) is an $H$-packing that covers all vertices of the host graph. In other words, $G$ has an {\em $H$-factor} (contains a perfect $H$-packing) if $n$ is divisible by $h$ and $G$ contains $n/h$ vertex-disjoint copies of $H$. It has been long known that for every graph $H$, if the minimum degree of $G$ is sufficiently large, then $G$ contains an $H$-factor. For example, by the Dirac's Theorem on Hamiltonian cycles~\cite{Dirac}, if $H$ is a path of length $h-1$, then $\delta(G) \geq n/2$ guarantees that $G$ has an $H$-factor. Corr{\'a}di and Hajnal~\cite{CoHa} proved that $\delta(G) \geq 2n/3$ is sufficient to guarantee a $K_3$-factor and Hajnal and Szemer{\'e}di~\cite{HaSz} showed that $\delta(G) \geq (1-1/k)n$ suffices to guarantee a $K_k$-factor for an arbitrary $k$. Moreover, all these results are easily seen to be best possible.

Finding a similar optimal condition on the minimum degree that guarantees an $H$-factor for an arbitrary graph $H$ has turned out to be significantly harder. The first result in this direction was obtained by Alon and Yuster~\cite{AlYu-almost}, who showed that $\delta(G) \geq (1-1/\chi(H))n$ implies the existence of $n/h - o(n)$ vertex-disjoint copies of $H$ in $G$. Later, the same authors~\cite{AlYu-factor} showed that $\delta(G) \geq (1 - 1/\chi(H))n + o(n)$ guarantees an $H$-factor. Finally, Koml{\'o}s, S{\'a}rk{\"o}zy, and Szemer{\'e}di~\cite{KoSaSz} showed that merely $\delta(G) \geq (1-1/\chi(H))n + c(H)$, where $c(H)$ is a (small) constant depending only on $H$, suffices. Moreover, it was observed in~\cite{AlYu-factor} that there are graphs $H$ for which the above constant $c(H)$ cannot be omitted. Recently, K{\"u}hn and Osthus~\cite{KuOs} replaced $\chi(H)$ in the above inequality by another parameter $\chi^*(H)$, which depends on the relative sizes of the color classes in the optimal colorings of $H$ and satisfies $\chi(H) - 1 < \chi^*(H) \leq \chi(H)$. Furthermore, they proved that the ratio $(1 - 1/\chi^*(H))$ in the lower bound for $\delta(G)$ is optimal for every $H$. For further information on $H$-factors in graphs with large minimum degree, we refer the reader to~\cite{KuOs09, KuOs}.

An independent direction of research concerned with $H$-factors has been determining the thresholds for the edge probability $p$ for the property that the Erd{\H{o}}s-R{\'e}nyi random graph $G(n,p)$ contains an $H$-factor. The case $H = K_2$ was solved by Erd{\H{o}}s and R{\'e}nyi~\cite{ErRe}, who proved that $\log n/n$ is the threshold for the existence of a perfect matching in $G(n,p)$. The solution for the case when $H$ is a path is a direct consequence of the result of P{\'o}sa~\cite{Posa}. Alon and Yuster~\cite{AlYu-Gnp} and, independently, Ruci{\'n}ski~\cite{Rucinski} determined the threshold for every $H$ whose fractional arboricity\footnote{The fractional arboricity of a graph $H$ is the quantity $\max\left\{\frac{|E(H')|}{|V(H')|-1}\right\}$, where the maximum is taken over all subgraphs $H'$ of $H$ with $|V(H')| > 1$.} is larger than its minimum degree. Later, partial results for the case $H = K_3$ were obtained by Krivelevich~\cite{Krivelevich} and Kim~\cite{Kim} (a related work of
Krivelevich, Sudakov, and Szab{\'o} \cite{KrSuSz} studied this case when the
host graph is a sparse pseudo-random regular graph).
Finally, Johansson, Kahn, and Vu~\cite{JoKaVu} determined the thresholds for all strictly balanced $H$ and determined them up to a sub-polynomial factor for arbitrary $H$.

Much less is known about common extensions of the results of the above two types. To make it precise, we would like to know whether it is true that for sufficiently large $p$, a.a.s.~every spanning subgraph of $G(n,p)$ with sufficiently large minimum degree has an $H$-factor. Questions like these can be naturally expressed in the framework of {\em resilience}, also called {\em fault tolerance}. Following Sudakov and Vu~\cite{SuVu}, we state the following definition.

\begin{DFN}
  Let $\cP$ be a monotone increasing graph property. The {\em local resilience} of a graph $G$ with respect to $\cP$ is the minimum number $r$ such that by deleting at most $r$ edges at each vertex of $G$, one can obtain a graph without $\cP$.
\end{DFN}

Using this terminology, one can restate, e.g., the aforementioned theorem of Corr{\'a}di and Hajnal~\cite{CoHa} by saying that the local resilience of the complete graph $K_n$ with respect to the property of having a triangle-factor is (at least) $n/3$.

Rephrasing our previous question, we would like to determine the local resilience of the random graph $G(n,p)$ with respect to the property of containing an $H$-factor for some fixed graph $H$. Sudakov and Vu~\cite{SuVu} showed that it is $(1/2+o(1))$ when $H = K_2$ and $p \gg \log n/n$ or when $H$ is a path and $p \gg (\log n)^4/n$; Lee and Sudakov~\cite{LeSu} showed that the assumption $p \gg \log n /n$ suffices also in the latter case. Recently, Huang, Lee, and Sudakov~\cite{HuLeSu} addressed this problem for an arbitrary $H$ in the case when the edge probability $p$ is a constant.

\begin{THM}
  Let $H$ be a fixed graph on $h$ vertices, let $p \in (0,1]$, and let $\gamma$ be a positive real.
  \begin{enumerate}
  \item
    If $H$ has a vertex that is not contained in any triangle, then a.a.s.~every spanning subgraph $G \subset G(n,p)$ with $\delta(G) \geq (1 - 1/\chi(H) + \gamma)np$ has a perfect $H$-packing, provided that $n$ is divisible by $h$.
  \item
    If every vertex of $H$ is contained in a triangle, then a.a.s.~every spanning subgraph $G \subset G(n,p)$ with $\delta(G) \geq (1 - 1/\chi(H) + \gamma)np$ contains an $H$-packing covering all but at most $Dp^{-2}$ vertices of $G$, where $D$ is a constant that depends only on $\chi(H)$.
  \end{enumerate}
\end{THM}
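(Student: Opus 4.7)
The plan is to reduce the problem to the Hajnal--Szemer\'edi theorem via the regularity method, then use the blow-up lemma to build $H$-factors inside cluster-cliques. Since $p$ is constant, edge counts between linear-sized subsets of $G(n,p)$ concentrate tightly around their expectation a.a.s., so one can apply Szemer\'edi's regularity lemma to the spanning subgraph $G$ itself (not to $G(n,p)$) with density threshold $d = p/2$, producing a reduced graph $R$ on $k$ equal-sized clusters $V_1,\dots,V_k$ of size $m\approx n/k$. The minimum degree hypothesis $\delta(G)\ge(1-1/\chi(H)+\gamma)np$ combined with the edge-count concentration yields $\delta(R)\ge(1-1/\chi(H)+\gamma/2)k$, so Hajnal--Szemer\'edi provides a $K_{\chi(H)}$-factor in $R$. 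Up to an exceptional set $V_0$ of size $o(n)$, this partitions $V(G)$ into groups of $\chi(H)$ clusters pairwise spanning $\varepsilon$-regular pairs of density at least $p/2$.

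Inside each such group, discard a small subset of vertices to achieve super-regularity, and then invoke the blow-up lemma to find an $H$-factor on the retained vertices. This is valid because the induced subgraph on the group is $\chi(H)$-partite with all pairs $\varepsilon$-regular of density $\Omega(p)$, and the blow-up lemma guarantees an $H$-embedding into such a structure for any fixed $H$ whose color classes fit into the cluster sizes. Assembling these $H$-factors across all groups produces an $H$-packing that covers all but $o(n)$ vertices of $G$.

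It remains to absorb the leftover vertices (those in the exceptional set $V_0$ together with the vertices trimmed during super-regularization); this is where parts (1) and (2) diverge. For part (1), fix a vertex $v^*$ of $H$ that lies in no triangle of $H$. Any candidate absorbing vertex $u$ then only needs to be adjacent to a copy of $H - v^*$ inside some cluster-clique, and since $\chi(H - v^*) \le \chi(H)$ and $u$ has roughly $np/\chi(H)$ neighbors in each color class, such slots are abundant: set aside a small reservoir at the start, build the near-perfect packing on the remainder, and absorb each leftover vertex one by one into a prepared slot. This yields a perfect $H$-packing when $h \mid n$. For part (2), every vertex of $H$ lies in a triangle of $H$, so absorbing a vertex $u$ forces $u$ to lie in a triangle of $G$ whose other two vertices sit in prescribed clusters. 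The main obstacle is to design a triangle-based absorber flexible enough to accommodate every exceptional vertex that has such a compatible triangle available; this can be arranged for all but $O(p^{-2})$ vertices, the bound arising from the count of triangle-deficient local configurations in a random graph of density $p$ and from the flexibility budget used by the reservoir. The remaining at most $Dp^{-2}$ vertices, for which no compatible triangle slot exists, are left uncovered, matching the statement of the theorem.
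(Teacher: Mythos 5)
The statement you are proving is the theorem of Huang, Lee, and Sudakov; this paper only \emph{cites} it from~\cite{HuLeSu} as background and does not prove it, so there is no in-paper proof to compare against. Your outline --- regularity lemma applied directly to the dense subgraph $G$, Hajnal--Szemer\'edi on the reduced graph to extract a $K_{\chi(H)}$-factor of clusters, the blow-up lemma inside each such group, and a final absorption stage --- is the natural strategy and is broadly consistent with the original reference.

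Two steps of the sketch contain genuine gaps, however. First, in part~(1), absorbing the leftover vertices ``one by one'' into copies of $H - v^*$ drawn from a pre-set reservoir does not, as written, produce a perfect $H$-packing: each absorption consumes $h-1$ reservoir vertices, and at the end you must still find an $H$-factor on the residual reservoir, whose size and structure (in particular divisibility by $h$, and the color-class proportions compatible with $\chi(H)$) you have not controlled. The standard fix is an absorbing set $A$ with the stronger \emph{set}-absorbing property that $G[A \cup S]$ has an $H$-factor for every admissible small $S$, after which one packs $V \setminus A$ almost perfectly; your proposal neither constructs such an $A$ nor argues why sequential one-by-one absorption terminates with a factor. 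Second, the $Dp^{-2}$ bound in part~(2) is asserted rather than derived. The mechanism that actually yields it (compare Lemma~\ref{lemma_leftovertriangles} of the present paper, which generalizes Lemma~6.4 of~\cite{HuLeSu}) is a uniform statement of the form: a.a.s., for every spanning subgraph $G' \subseteq G(n,p)$ with the required minimum degree and every small prescribed set $T$, all but $O(p^{-2})$ vertices lie in a triangle of $G'$ avoiding $T$. It is this uniformity over $T$ that lets one absorb leftover vertices iteratively while keeping the exceptional set at $O(p^{-2})$; gesturing at ``triangle-deficient local configurations'' supplies neither the uniformity nor the quantitative count. You also do not address the balancing of cluster sizes inside each $K_{\chi(H)}$-group needed so that the blow-up lemma yields an $H$-\emph{factor} rather than merely a near-factor, though that omission is more routine.
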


Moreover, it was shown in~\cite{HuLeSu} that in the case when each vertex of $H$ belongs to some triangle, the $Dp^{-2}$ error term cannot be removed as a.a.s.~$G(n,p)$ has a spanning subgraph with large minimum degree such that at least $\Omega(p^{-2})$ of its vertices are not contained in a triangle (and hence they are not contained in a copy of $H$). For other results on local resilience of random graphs with respect to the property of containing spanning or nearly spanning subgraphs, see~\cite{BaCsSa,BSKrSu,BoKoTa,DeKoMaSt,FrKr,KrLeSu}.

In this paper, we extend the result of Huang, Lee, and Sudakov to the sparse random graph setting in the case $H = K_3$. A rather straightforward argument using the conjecture of Kohayakawa, {\L}uczak, and R\"{o}dl~\cite[Conjecture~23]{KoLuRo}, which is known to be true for triangles, shows that if $p \gg n^{-1/2}$, then a.a.s.~every subgraph of $G(n,p)$ whose minimum degree exceeds $(2/3+o(1))np$ contains a triangle-packing that covers all but at most $\varepsilon n$ vertices, where $\varepsilon$ is an arbitrary positive constant (see Remark~\ref{rem_almost-spanning-packing}). Our main theorem proves that under the same assumptions, one can make the set of uncovered vertices significantly smaller. More precisely, we prove the following statement.

\begin{THM} \label{thm_main}
  For all positive $\gamma$, there exist constants $C$ and $D$ such that if $p \ge C(\log n /n)^{1/2}$, then a.a.s.~every subgraph $G \subset G(n,p)$ with $\delta(G) \ge (2/3 + \gamma)np$ contains a triangle packing that covers all but at most $Dp^{-2}$ vertices.
\end{THM}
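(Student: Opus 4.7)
My plan is to combine a sparse regularity lemma with the embedding theorem mentioned in the abstract, in the spirit of the analogous argument of Huang, Lee, and Sudakov~\cite{HuLeSu} in the dense case.

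First, I would apply a sparse version of the Szemer\'edi regularity lemma, adapted to subgraphs of $G(n,p)$, to produce an equitable partition $V(G) = V_0 \cup V_1 \cup \dots \cup V_k$ with $|V_0|$ small and all but an $\varepsilon$-fraction of the pairs $(V_i, V_j)$ being $(\varepsilon,p)$-regular. Form a reduced graph $R$ on $[k]$ by joining $i$ to $j$ whenever $(V_i, V_j)$ is regular of density at least $(2/3 + \gamma/2)p$. The minimum degree hypothesis $\delta(G) \ge (2/3+\gamma)np$, together with standard regularity bookkeeping, transfers to $\delta(R) \ge (2/3 + \gamma/4)k$, so the Corr\'adi--Hajnal theorem yields a perfect triangle packing $\mathcal{T}$ in $R$ (after discarding at most two clusters to make $k$ divisible by~$3$).

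Second, for each triangle $\{i_1,i_2,i_3\} \in \mathcal{T}$ the induced tripartite graph on $V_{i_1} \cup V_{i_2} \cup V_{i_3}$ is balanced, has all three pairs $(\varepsilon,p)$-regular of density at least $(2/3+\gamma/2)p$, and inherits from $G(n,p)$ the pseudo-randomness needed for the embedding theorem. A short cleanup step removes a few atypical vertices from each triple (those with anomalously low degree into one of the other two parts or with anomalously small co-neighborhoods) and rebalances the three parts. The embedding theorem then provides a perfect triangle packing inside each cleaned-up triple, and the union over all triples yields a triangle packing of $G$ missing only the vertices in $V_0$ together with the cleanup set.

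The main obstacle is controlling the size of the uncovered set by $O(p^{-2})$ rather than by $o(n)$. A naive application of sparse regularity produces $|V_0| = \varepsilon n \gg p^{-2}$, and a crude cleanup step would similarly introduce an $\varepsilon$-proportion of losses per cluster. Achieving the sharp $O(p^{-2})$ bound likely requires an absorbing argument: reserve in advance a small family of "absorbing" triangle gadgets (of total size $O(p^{-2})$) with the property that any sufficiently small leftover set can be covered by rearranging some reserved triangles, and then verify that after running the regularity + embedding machinery on the complement, the residual set is small enough to be absorbed. Verifying that the a.a.s.\ properties of $G(n,p)$ support such an absorbing structure down to the scale $p^{-2}$, and thereby matching the lower bound $\Omega(p^{-2})$ implicit in the introduction, is the technical heart of the proof; this is also where the assumption $p \ge C(\log n / n)^{1/2}$ is essentially used, since it is precisely the threshold at which the required concentration holds for all sufficiently large subsets of $G(n,p)$.
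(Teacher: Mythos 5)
Your high-level strategy — sparse regularity to get a cluster graph, Corr\'adi--Hajnal on the reduced graph to pair clusters into triangles, then an embedding theorem inside each regular triple — is indeed the skeleton of the paper's proof. But two of the steps you sketch as technical details are in fact where the real work lies, and in both places you either leave a genuine gap or diverge from an argument that actually works.

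First, the mechanism for reducing the residual set from $\varepsilon n$ to $O(p^{-2})$. You propose reserving an absorbing family of ``triangle gadgets'' in advance and rearranging them at the end. This is never carried out, and it is not what the paper does. The paper's device is Lemma~\ref{lemma_leftovertriangles}: in $G(n,p)$, a.a.s.\ for every spanning subgraph with $\delta \ge (2/3)np$ and every forbidden set $T$ of size at most $\varepsilon n$, all but $O(p^{-2})$ vertices lie in a triangle avoiding $T$. This permits a simple \emph{greedy} absorption of the exceptional set — pull off triangles one at a time, each hitting one exceptional vertex and avoiding the vertices already used — until only $O(p^{-2})$ exceptional vertices remain (Theorem~\ref{thm_cleanstage1}). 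The proof of Lemma~\ref{lemma_leftovertriangles} is a direct counting argument using Propositions~\ref{prop_randomgraphproperties1}, \ref{prop_nontypicalvertices}, and~\ref{prop_randomgraphproperties2}; there is no reservoir of pre-built gadgets. If you want the absorbing route you would need to actually construct a $p^{-2}$-scale absorber and prove it works, which is nontrivial and, in light of the paper's Lemma~\ref{lemma_leftovertriangles}, unnecessary.

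Second, and more seriously, your ``cleanup step removes a few atypical vertices'' targets super-regularity, but the embedding theorem (Theorem~\ref{thm_triangleblowup}) demands the strictly stronger notion of \emph{strong}-super-regularity: every vertex must be $\varepsilon$-good, i.e., its neighbourhood must contain few edges whose endpoints have few common neighbours back in the third part. This condition does not follow from typicality alone, and removing vertices with ``anomalously small co-neighborhoods'' is not the same thing. The paper needs an entire intermediate stage (Theorem~\ref{thm_decomposition}, Section~\ref{section_cleanup2}) to carve out the non-good vertices, cover them with a small triangle packing using a random-matching lemma (Lemma~\ref{lemma_randommatching}), and rebalance to land in a strong-super-regular triple — losing only $O(p^{-1})$ vertices per triple in the process. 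This whole step 3 of the paper's outline is absent from your plan; without it the embedding theorem cannot be invoked, and it is precisely where the super-regular $\to$ strong-super-regular refinement — which you gloss over as ``inherits the pseudo-randomness needed'' — must be proved.
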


Clearly, the ratio $2/3$ in the statement of Theorem~\ref{thm_main} is best possible as for every positive $\gamma$, a.a.s.~$G(n,p)$ has a subgraph $G$ with $\delta(G) \geq (2/3 - \gamma)np$ whose largest triangle packing covers no more than $(1-\gamma)n$ vertices (e.g., we may let $G$ be the intersection of $G(n,p)$ with the complete $3$-partite graph with color classes of sizes $(1+\gamma)n/3$, $n/3$, and $(1-\gamma)n/3$). Furthermore, even though it was proved in~\cite{JoKaVu} that $p \gg n^{-2/3}(\log n)^{1/3}$ guarantees that $G(n,p)$ a.a.s.~has a triangle-factor, the lower bound on $p$ in Theorem~\ref{thm_main} cannot be relaxed by more than the $(\log n)^{1/2}$ factor as if $p \ll n^{-1/2}$, then a.a.s.~one can remove all triangles from $G(n,p)$ by deleting only $o(np)$ edges incident to every vertex. Finally, the presence of the exceptional set of $Dp^{-2}$ is indispensable, see Proposition~\ref{prop_removetriangles} and~\cite[Proposition 6.3]{HuLeSu}.

\subsection{Embedding theorem for sparse regular triples}

One of the main ingredients in the proof of Theorem~\ref{thm_main} is an embedding
theorem for large triangle packings in sparse regular triples. Before we state this result
(Theorem~\ref{thm_triangleblowup} below), we recall a few basic definitions and
briefly summarize what is known about embedding large graphs into regular triples.

Let $G$ be a graph on a vertex set $V$. Given a pair of disjoint subsets $V_1, V_2 \subset V$,
let $e(V_1, V_2)$ denote the number of edges of $G$ with one endpoint in $V_1$ and the other endpoint in $V_2$,
and let the \emph{density} $d(V_1, V_2)$ of the pair $(V_1, V_2)$ be the quantity $e(V_1, V_2)/(|V_1||V_2|)$.
The pair $(V_1, V_2)$ is called \emph{$(\varepsilon,p)$-regular} if for all $V_1'
\subset V_1$ and $V_2' \subset V_2$ with $|V_1'| \ge \varepsilon
|V_1|$ and $|V_2'| \ge \varepsilon |V_2|$, we have
$\left| d(V_1, V_2) - d(V_1', V_2') \right| \le \varepsilon p$.
An $(\varepsilon,1)$-regular pair is simply called \emph{$\varepsilon$-regular}.
The concept of regularity, first developed by Szemer{\'e}di~\cite{Szemeredi}, proved to be
of extreme importance in modern combinatorics and played a central r{\^o}le in proofs of a range
of results in extremal graph theory, Ramsey theory, and others. For example,
it is well-known that every triple of sets $(V_1, V_2, V_3)$ such that $(V_i, V_j)$ is
$\varepsilon$-regular
and has sufficiently large density for all distinct $i$, $j$ contains a triangle.
An $\varepsilon$-regular pair $(V_1, V_2)$ is called \emph{$(\delta,\varepsilon)$-super-regular} if it
satisfies the additional condition that every vertex in $V_1$ has at least $\delta |V_2|$ neighbours
in $V_2$ and, vice versa, every vertex in $V_2$ has at least $\delta |V_1|$ neighbours in $V_1$.
Koml{\'o}s, S{\'a}rk{\"o}zy, and Szemer{\'e}di~\cite{KoSaSz97} proved that
super-regular triples are even more powerful than mere regular triples.
For instance, every triple $(V_1, V_2, V_3)$ such that $|V_1| = |V_2| = |V_3|$ and $(V_i, V_j)$ is
$(\delta, \varepsilon)$-super-regular and has sufficiently large density for all distinct $i$, $j$
contains not only a single triangle, but also a family of vertex-disjoint triangles that cover all
vertices of the triple.

However, if $p \ll 1$, then the power of $(\varepsilon,p)$-regular pairs turns out to be significantly weaker.
For example, {\L}uczak (see \cite{KoRo03}) observed that there are $(\varepsilon, p)$-regular triples which do not contain
even a single triangle. Still, Kohayakawa, {\L}uczak, and R\"{o}dl  \cite{KoLuRo} proved that most $(\varepsilon,p)$-regular
triples contain a triangle provided that $p$ is sufficiently large
and conjectured that an analogous result holds for arbitrary graphs (see the survey~\cite{GeSt}).

It is not much of a surprise that even less is known about embedding large graphs into sparse regular pairs. B{\"o}ttcher, Kohayakawa, and Taraz~\cite{BoKoTa} proved that if the regular pair is a subgraph of a random graph and each part has size $n$, then (asymptotically almost surely) one can embed into the pair all bipartite graphs with bounded maximum degree whose color classes both have size at most $(1-\eta)n$, where $\eta$ is a fixed positive real.
Since in an $(\varepsilon, p)$-regular pair $(V_1, V_2)$, each set $V_i$ can have as many as $c_\varepsilon|V_i|$ isolated vertices, one cannot hope to embed spanning graphs into the pair without imposing some further restrictions.
Let us now consider sparse regular triples. Observe that imposing merely a minimum degree condition as in the dense case is not sufficient since we can remove all triangles that contain a fixed vertex by deleting all edges in its neighbourhood (this will not effect regularity of the triple since the neighbourhoods of this vertex have size $o(n)$).
We suggest one possible strengthening of the notion of super-regularity, which we call \emph{strong-super-regularity}, and show that a sparse strong-super-regular triple in a subgraph of a random graph contains a collection of vertex-disjoint triangles that cover all the vertices of the triple.
The definition of a strong-super-regular triple is given in Definition~\ref{dfn_good}.


\begin{THM} \label{thm_triangleblowup}
  For all positive $\delta$ and $\xi$ there exist $\varepsilon(\delta)$ and $C(\delta,\xi)$ such that if $p(n) \geq C(\log n/n)^{1/2}$, then $G(n,p)$ a.a.s.~satisfies the following. Every $(\delta,\varepsilon,p)$-strong-super-regular triple $(V_1, V_2, V_3)$ that is a subgraph of $G(n,p)$ with $|V_1| = |V_2| = |V_3| \geq \xi n$ contains a collection of vertex-disjoint triangles that cover all the vertices.
\end{THM}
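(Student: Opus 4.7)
The plan is to apply the absorbing method of R\"odl, Ruci\'nski, and Szemer\'edi, adapted to the sparse super-regular triple setting. I would split the proof into three stages: counting absorbing gadgets, building a random absorber together with an almost-perfect packing of the rest, and finally swallowing the leftover.

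First I would fix the following absorbing gadget for a triple $(u_1,u_2,u_3)$ with $u_i\in V_i$: a nine-vertex set $\{a_i,b_i,c_i:i=1,2,3\}$ with $a_i,b_i,c_i\in V_i$ such that the three ``base'' triples $\{a_1,a_2,a_3\}$, $\{b_1,b_2,b_3\}$, $\{c_1,c_2,c_3\}$ are triangles of $G$ and so are the four ``absorbing'' triples $\{u_1,a_2,a_3\}$, $\{a_1,b_2,b_3\}$, $\{b_1,c_2,c_3\}$, $\{c_1,u_2,u_3\}$. The key technical lemma would be that for every such $(u_1,u_2,u_3)$ there are $\Omega(p^{18}n^9)$ gadgets in $G$. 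Strong-super-regularity supplies the regularity of the pair-restrictions to the relevant common neighbourhoods together with the necessary lower bounds on their sizes; combined with Janson- or Kim--Vu-type concentration for triangle counts, these conditions force the required gadget count. This is the most delicate step because regularity-based counting must be replaced by concentration in the sparse regime, and the hypothesis $p\ge C(\log n/n)^{1/2}$ is essentially the threshold at which the necessary concentration holds uniformly over all vertices.

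Next I would select a random absorber $A\subseteq V_1\cup V_2\cup V_3$ by including each vertex independently with probability $\rho=\rho(\delta,\xi)$. Standard Chernoff plus union-bound arguments, fed by the gadget count, yield with positive probability that (a) the subgraph induced on $A$ has a triangle factor, and (b) for every balanced leftover $S\subseteq(V_1\cup V_2\cup V_3)\setminus A$ with $|S|\le\eta\rho n$, the set $A\cup S$ also admits a triangle factor; property (b) reduces to a Hall-type defect condition on the auxiliary hypergraph whose hyperedges are the gadgets. The residual triple $(V_i\setminus A)_{i=1,2,3}$ remains strong-super-regular after the deletion of $A$, so an almost-perfect triangle packing of it---produced by iterating the Kohayakawa--{\L}uczak--R\"odl counting lemma for triangles together with a Haxell-type matching argument in the triangle $3$-graph---covers all but at most $\eta\rho n$ vertices, and the leftover can be arranged to be balanced across the three classes. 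Applying property (b) of the absorber then closes the triangle factor.

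The hardest part is the gadget-counting lemma of Stage 1. Once it is in hand, the random construction, the inheritance of strong-super-regularity under small vertex deletions, and the almost-perfect packing step are fairly routine. All of them hinge, however, on the hypothesis $p\ge C(\log n/n)^{1/2}$, which provides just enough concentration of triangle counts for every argument above to go through simultaneously over all choices of $(u_1,u_2,u_3)$.
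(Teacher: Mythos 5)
Your proposal replaces the paper's matching-based completion argument with the absorbing method of R\"odl, Ruci\'nski, and Szemer\'edi. This is a genuinely different route: the paper constructs small quasi-random matchings $M_{12}, M_{13}, M_{23}$ of $\varepsilon$-good edges (Lemma~\ref{lemma_randommatching}) together with buffer sets $X_i$, finds an almost-perfect packing avoiding these reserved vertices (Proposition~\ref{prop_findtriangle}), and then uses Hall's condition to match each leftover vertex with an edge of the appropriate $M_{jk}$ (Lemma~\ref{lemma_main}). Your Stage~2 and Stage~3 correspond roughly to the paper's overall framework, and your observation that the leftover should be balanced echoes the paper's parity bookkeeping.

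However, your Stage~1 has a genuine gap. You need to count gadgets in $G'$, the adversarially chosen strong-super-regular subgraph, not in $G(n,p)$ itself. Janson- and Kim--Vu-type concentration inequalities are statements about $G(n,p)$ and do not transfer to subgraphs that an adversary is allowed to choose after the random graph is exposed. What you would actually need is a \emph{counting lemma} for the nine-vertex gadget inside sparse strong-super-regular triples, and this is substantially harder than counting triangles. The obstruction is concrete: the gadget has a chain structure $u \to a \to b \to c \to u$, so its count depends on regularity conditions inside nested common neighbourhoods two and three levels deep (e.g.\ you must control edges within $N(a_1)$ where $a_1$ itself ranges over $N(a_2)\cap N(a_3)$ with $a_2,a_3\in N(u_1)$). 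The definition of $\varepsilon$-typicality in the paper controls only the first level (the pair $(N(v)\cap V_j, N(v)\cap V_k)$ is regular of the right density); it says nothing about second- or third-level neighbourhoods, because those sets have size $\Theta(np^2)$ or $\Theta(np^3)$, which is far below the $\varepsilon|V_i|$ cutoff at which $(\varepsilon,p)$-regularity gives information. Strengthening Definition~\ref{dfn_typical} and Lemma~\ref{lemma_typicalregulartriple} to reach that depth would be a significant additional piece of work, and it is precisely the kind of nested counting that the sparse regularity method is known to struggle with near $p\asymp n^{-1/2}$. The paper's approach deliberately avoids this by keeping every structural requirement at the first-order level: the only substructures it ever needs to control are common neighbourhoods of single vertices (captured by $\varepsilon$-good edges and vertices) and matchings selected edge by edge, for which Chernoff alone suffices.

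In short: the high-level strategy (almost-perfect packing plus a mechanism to swallow the leftover) is sound and shared with the paper, but the specific mechanism you propose requires a counting lemma that neither your cited tools nor the paper's framework provides, and obtaining it would be at least as hard as the problem you are trying to solve. The paper's Hall's-theorem approach via reserved good-edge matchings is the ``more elementary'' route precisely because it never has to count anything deeper than a single triangle through a good edge.
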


It is possible that one can derive the same conclusion from weaker assumptions than strong-super-regularity. However, we will later show that the restriction we imposed is not too strong to make our theorem useless, as Theorem~\ref{thm_triangleblowup}
will form an essential part in the proof of Theorem~\ref{thm_main}.

\subsection{Outline of the paper}

In Section~\ref{section_preliminaries}, we recall some known definitions and results and introduce a few notions that will be of great importance in all subsequent sections. Section~\ref{section_outline-proof} contains an outline of the proof of Theorem~\ref{thm_main}. In Section~\ref{section_prop-rand-graphs}, we establish some properties of the random graph $G(n,p)$ that we will frequently invoke in subsequent sections. In Sections~\ref{section_cleanup1}, \ref{section_cleanup2}, and \ref{section_packingssregular}, we prove a series of technical lemmas that culminate in the proof of Theorem~\ref{thm_main} and \ref{thm_triangleblowup}. For a brief outline of this part of the paper, we refer the reader to Section~\ref{section_outline-proof}. Finally, Section~\ref{section_concluding-remarks} contains a few concluding remarks.

\subsection{Notation}

Let $G$ be a graph with vertex set $V$ and edge set $E$. For a vertex $v \in V$, we denote its neighbourhood in $G$ by $N(v)$ and let $\deg(v)$ be its degree. The minimum degree of the graph is denoted by $\delta(G)$. For a set $X \subset V$, we let $e(X)$ be the number of edges of $G$ with both endpoints in the set $X$, and $\deg(v, X) = |N(v) \cap X|$. We say that two edges are independent if they do not share a vertex. For two subsets $X, Y \subset V$, we let $e(X,Y)$ be the number of ordered pairs $(x,y)$ such that $x \in X$, $y \in Y$ and $xy$ is an edge of $G$; note that $e(X,X) = 2e(X)$. If $X$ and $Y$ are disjoint, we refer to the quantity $e(X, Y)/(|X||Y|)$, denoted by $d(X,Y)$, as the density of the pair $(X,Y)$. With a slight abuse of notation, we will sometimes write $(X,Y)$ to denote the set of all edges $xy$ with $x \in X$ and $y \in Y$. Let $X, Y, Z \subset V$ be three pairwise disjoint sets. We say that the triple $(X, Y, Z)$ is balanced if $|X| = |Y| = |Z|$. The minimum density of the triple is the minimum of the numbers $d(X,Y)$, $d(X,Z)$, and $d(Y,Z)$. A triangle across $(X, Y, Z)$ is any triangle with one vertex in each of $X$, $Y$, and $Z$. When the implicit graph we are considering is not clear from the context, we will use subscripts to prevent ambiguity. For example, $\deg_G(v)$ is the degree of $v$ in the graph $G$.

We write $y = 1 \pm x$ to abbreviate $y \in [1-x, 1+x]$. We omit floor and ceiling signs whenever they are not crucial. Throughout the paper, $\log$ will always denote the natural logarithm. Finally, we often use subscripts such as in $c_{3.6}$ to explicitly indicate that the constant $c_{3.6}$ is defined in Claim/Lemma/Proposition/Theorem~3.6.

\section{Preliminaries}
\label{section_preliminaries}

\subsection{Sparse regularity lemma}

Let $G$ be a graph on a vertex set $V$. Recall that a pair $(V_1, V_2)$ of disjoint subsets of
$V$ is $(\varepsilon,p)$-regular if for all $V_1'
\subset V_1$ and $V_2' \subset V_2$ with $|V_1'| \ge \varepsilon
|V_1|$ and $|V_2'| \ge \varepsilon |V_2|$, $\left| d(V_1, V_2) -
d(V_1', V_2') \right| \le \varepsilon p.$ We call a triple $(V_1, V_2, V_3)$ of disjoint subsets of $V$
\emph{$(\varepsilon,p)$-regular} if $(V_i, V_j)$ forms an
$(\varepsilon,p)$-regular pair for every $\{i,j\} \subset \{1,2,3\}$. Let
$\FAMILY{K_3}{(n_1, n_2, n_3)}{(d_{12}, d_{23},
d_{31})}{(\varepsilon,p)}$ be the collection of all
$(\varepsilon,p)$-regular triples $(V_1, V_2, V_3)$
such that $|V_i| = n_i$ for all $i$ and $d(V_i, V_j) = d_{ij}p$ for
every $\{i,j\} \subset \{1,2,3\}$.

Below we establish two simple hereditary properties of regular pairs.

\begin{PROP} \label{prop_inheritregularity}
Let positive reals $\varepsilon_1$, $\varepsilon_2$, and $p$ satisfying
$\varepsilon_1 < \varepsilon_2 \leq 1/2$ be given. Let $(V_1, V_2)$ be
an $(\varepsilon_1,p)$-regular pair and for $i \in \{1,2\}$, let $V_i'
\subset V_i$ be an arbitrary subset with $|V_i'| \ge \varepsilon_2
|V_i|$. Then $(V_1', V_2')$ is an $(\varepsilon_1 / \varepsilon_2,
p)$-regular pair of density $d(V_1, V_2) \pm \varepsilon_1 p$.
\end{PROP}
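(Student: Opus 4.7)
The plan is a direct, two-step unpacking of the definition of $(\varepsilon_1,p)$-regularity. First I would handle the density claim: since $\varepsilon_1 < \varepsilon_2$, the subsets $V_1'$ and $V_2'$ themselves satisfy $|V_i'| \geq \varepsilon_2 |V_i| \geq \varepsilon_1 |V_i|$, so applying $(\varepsilon_1,p)$-regularity of $(V_1, V_2)$ with $V_1', V_2'$ as the test subsets gives immediately
\[
|d(V_1', V_2') - d(V_1, V_2)| \leq \varepsilon_1 p,
\]
which is exactly the claim $d(V_1', V_2') = d(V_1, V_2) \pm \varepsilon_1 p$.

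Next I would verify the regularity claim. Pick arbitrary $U_1 \subseteq V_1'$ and $U_2 \subseteq V_2'$ with $|U_i| \geq (\varepsilon_1/\varepsilon_2)|V_i'|$. The key observation is that
\[
|U_i| \geq (\varepsilon_1/\varepsilon_2) |V_i'| \geq (\varepsilon_1/\varepsilon_2)\cdot \varepsilon_2 |V_i| = \varepsilon_1 |V_i|,
\]
so $U_1, U_2$ also qualify as admissible test subsets in the original $(\varepsilon_1,p)$-regular pair. Hence $|d(U_1, U_2) - d(V_1, V_2)| \leq \varepsilon_1 p$, and combining this with the density bound from the first step via the triangle inequality gives
\[
|d(U_1, U_2) - d(V_1', V_2')| \leq 2\varepsilon_1 p.
\]

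Finally, I would invoke the hypothesis $\varepsilon_2 \leq 1/2$, which yields $2\varepsilon_1 p \leq (\varepsilon_1/\varepsilon_2) p$, closing the argument. There is no real obstacle here; the only subtlety is that the factor-of-$2$ loss in the triangle inequality is absorbed precisely by the assumption $\varepsilon_2 \leq 1/2$, which is exactly why that hypothesis appears in the statement.
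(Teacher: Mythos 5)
Your argument is correct and essentially identical to the paper's own proof: both unpack the definition, apply the triangle inequality through $d(V_1,V_2)$ to get the $2\varepsilon_1 p$ bound, and absorb the factor of $2$ using $\varepsilon_2 \le 1/2$ (equivalently, $\max\{\varepsilon_1/\varepsilon_2, 2\varepsilon_1\} = \varepsilon_1/\varepsilon_2$). The only difference is cosmetic ordering, proving the density claim first rather than citing it at the end.
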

\begin{proof}
By regularity of the pair $(V_1, V_2)$, for every pair of
subsets $V_i'' \subset V_i'$ such that $|V_i''| \ge
(\varepsilon_1/\varepsilon_2)|V_i'| \ge \varepsilon_1|V_i|$ for
$i \in \{1,2\}$, we have
\[
|d(V_1'', V_2'') - d(V_1', V_2')| \le |d(V_1'', V_2'') - d(V_1, V_2)|
+ |d(V_1', V_2') - d(V_1, V_2)| \le 2\varepsilon_1 p.
\]
Since $\max\{\varepsilon_1/\varepsilon_2, 2\varepsilon_1\} = \varepsilon_1/\varepsilon_2$, the pair $(V_1',
V_2')$ is $(\varepsilon_1/\varepsilon_2,p)$-regular. The density
condition immediately follows from the definition of regularity.
\end{proof}

\begin{PROP} \label{prop_inheritregularity_edgever}
Let $(V_1, V_2)$ be an $(\varepsilon, p)$-regular pair in a graph
$G$ and let $G'$ be a subgraph of $G$ obtained by removing at most
$\varepsilon^3 p|V_1||V_2|$ edges from $(V_1, V_2)$. Then $(V_1, V_2)$ is
$(3\varepsilon, p)$-regular in $G'$.
\end{PROP}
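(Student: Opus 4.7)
The plan is to control how much the densities on a pair, and on a sub-pair, can shift when we delete a small set of edges, and then glue these estimates together with the triangle inequality and the original $(\varepsilon,p)$-regularity in $G$.

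Let $R$ be the set of removed edges, so $|R| \le \varepsilon^3 p|V_1||V_2|$, and fix arbitrary $V_1' \subset V_1$, $V_2' \subset V_2$ with $|V_i'| \ge 3\varepsilon|V_i|$. First I would compare $d_{G'}$ to $d_G$ on both the full pair and the sub-pair. On the full pair, at most $\varepsilon^3p|V_1||V_2|$ edges are lost, so
\[
|d_{G'}(V_1,V_2) - d_G(V_1,V_2)| \le \varepsilon^3 p.
\]
On the sub-pair, the number of removed edges is still at most $\varepsilon^3p|V_1||V_2|$, but this is now measured against the smaller normalizer $|V_1'||V_2'| \ge 9\varepsilon^2|V_1||V_2|$, so
\[
|d_{G'}(V_1',V_2') - d_G(V_1',V_2')| \le \frac{\varepsilon^3 p|V_1||V_2|}{|V_1'||V_2'|} \le \frac{\varepsilon p}{9}.
\]

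Next, since $|V_i'| \ge 3\varepsilon|V_i| \ge \varepsilon|V_i|$, the $(\varepsilon,p)$-regularity of $(V_1,V_2)$ in $G$ gives
\[
|d_G(V_1',V_2') - d_G(V_1,V_2)| \le \varepsilon p.
\]
Combining the three estimates via the triangle inequality yields
\[
|d_{G'}(V_1',V_2') - d_{G'}(V_1,V_2)| \le \frac{\varepsilon p}{9} + \varepsilon p + \varepsilon^3 p \le 3\varepsilon p,
\]
where the last inequality holds for any $\varepsilon \le 1$ (one easily checks $1/9 + 1 + \varepsilon^2 \le 3$). As $V_1',V_2'$ were arbitrary subsets of the prescribed sizes, $(V_1,V_2)$ is $(3\varepsilon,p)$-regular in $G'$.

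There is no real obstacle here; the only care needed is to notice that a bound of $\varepsilon^3p|V_1||V_2|$ on the \emph{total} number of removed edges is strong enough to survive the $1/(9\varepsilon^2)$ blow-up from passing to a sub-pair of relative size $3\varepsilon$, which is exactly why the exponent $3$ on $\varepsilon$ in the hypothesis is the right choice.
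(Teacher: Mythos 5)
Your proof is correct and follows essentially the same route as the paper: compare $d_{G'}$ with $d_G$ on both the full pair and the subpair, invoke $(\varepsilon,p)$-regularity of $(V_1,V_2)$ in $G$, and combine via the triangle inequality. The paper simply bounds all three terms uniformly by $\varepsilon p$ (using only $|V_i'|\ge \varepsilon|V_i|$), whereas you obtain the slightly sharper $\varepsilon p/9$ and $\varepsilon^3 p$ for the first and third terms; the conclusion is the same.
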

\begin{proof}
For $i \in \{1, 2\}$, let $U_i$ be a subset of $V_i$ of size at least $\varepsilon|V_i|$. Note that
\[ |d_{G}(U_1, U_2) - d_{G'}(U_1, U_2)| \le \frac{e_{G}(U_1, U_2) - e_{G'}(U_1, U_2)}{|U_1||U_2|} \le
\frac{\varepsilon^3 p|V_1||V_2|}{|U_1||U_2|} \le \varepsilon p. \]
The conclusion easily follows from the triangle inequality.
\end{proof}

An \emph{$(\varepsilon,p)$-regular partition} of an $n$-vertex graph $G$ is a partition
$(V_i)_{i=0}^k$ of its vertex set such that (i) the
exceptional class $V_0$ has size at most $\varepsilon n$, (ii)
$V_1, \ldots, V_k$ have equal sizes, and (iii) all but at most
$\varepsilon k^2$ of the pairs $(V_i, V_j)$ are
$(\varepsilon,p)$-regular. Given a collection of subsets
$(W_i)_{i=0}^{k}$ of the vertex set $V(G)$, the \emph{$(\delta,
\varepsilon, p)$-reduced graph} $R$ of the collection is the graph
on the vertex set $[k]$ such that $i,j \in [k]$ are adjacent if and
only if $W_i$ and $W_j$ form an $(\varepsilon,p)$-regular pair of
density at least $\delta p$. Note that when considering reduced
graphs, the partition $(W_i)_{i=0}^k$ is not necessarily a regular
partition and we ignore the set $W_0$. For a graph $R'$ on the
vertex set $[k]$, we say that $G$ is $(\delta,\varepsilon,p)$-regular 
over $R'$ if for every edge $\{i,j\}$ of $R'$, the pair
$(V_i,V_j)$ is $\varepsilon$-regular with density at least $\delta$.
Let $\eta$ and $b$ be reals such
that $\eta \in (0,1]$, and $b \ge 1$. We say that $G$ is \emph{$(\eta, b,
p)$-upper-uniform} if $d(V_1, V_2) \le bp$ for all disjoint sets $V_1$, $V_2$
with $|V_1|, |V_2| \ge \eta |V|$. With the above definitions at hand, we may
now state a version of Szemer{\'e}di's regularity lemma for upper-uniform graphs
(see, e.g., \cite{Kohayakawa, KoRo}). 

\begin{THM} \label{thm_regularity}
  For every positive $\varepsilon$, $b$, and $k_0$ with $b, k_0 \ge 1$, there exist constants $\eta(\varepsilon, b, k_0)$ and
$K(\varepsilon, b, k_0)$ with $K \ge k_0$ such that for every positive $p$,
every $(\eta, b, p)$-upper-uniform graph with at least $k_0$
vertices admits an $(\varepsilon,p)$-regular partition
$(V_i)_{i=0}^{k}$ such that $k_0 \le k \le K$, and each part
forms a regular pair with at least $(1 - \varepsilon)k$ other parts.
\end{THM}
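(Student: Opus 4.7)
The plan is to mimic the standard proof of Szemer\'edi's regularity lemma, replacing the trivial bound $d(V_i,V_j) \le 1$ (which is lost when $p \ll 1$) by the upper-uniformity hypothesis, which caps densities at $bp$. I would work with the renormalized mean-square-density index
\[
 \mathrm{ind}(\mathcal{P}) = \sum_{1 \le i < j \le k} \frac{|V_i||V_j|}{n^2} \cdot \frac{d(V_i,V_j)^2}{p^2}
\]
of a partition $\mathcal{P} = (V_i)_{i=0}^k$ (ignoring $V_0$), and establish three standard facts: (a) refining a partition never decreases $\mathrm{ind}$; (b) provided $\eta$ is chosen small enough that all the parts we ever produce satisfy $|V_i| \ge \eta n$, upper-uniformity gives $\mathrm{ind}(\mathcal{P}) \le b^2 + o(1)$; (c) if $\mathcal{P}$ has more than $\varepsilon k^2$ irregular pairs, then the common refinement obtained by cutting each $V_i$ according to the witness sets of the irregularity of its pairs satisfies $\mathrm{ind}(\mathcal{P}') \ge \mathrm{ind}(\mathcal{P}) + c\varepsilon^5$ for some absolute $c>0$.

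Part (c) is where the $p^{-2}$ normalization does its work: a witness to $(\varepsilon,p)$-irregularity of $(V_i,V_j)$ gives subsets $V_i', V_j'$ of size at least $\varepsilon|V_i|,\varepsilon|V_j|$ on which $d(V_i',V_j')$ differs from $d(V_i,V_j)$ by at least $\varepsilon p$. The defect form of Cauchy--Schwarz then produces an extra contribution of $\varepsilon^2 \cdot (\varepsilon p)^2 = \varepsilon^4 p^2$ to the corresponding summand, and averaging over the $\ge \varepsilon k^2$ irregular pairs gives the $c\varepsilon^5$ increment after dividing by $p^2$. Iterating the refinement at most $\lceil b^2/(c\varepsilon^5)\rceil$ times must produce an $(\varepsilon,p)$-regular partition; the equal-size condition is maintained by the usual device of chopping each $V_i$ into sub-blocks of a fixed common size and throwing the leftovers into $V_0$, which keeps $|V_0| \le \varepsilon n$ provided $\eta$ is chosen small compared with $\varepsilon$ and the final $K$. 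This yields $K(\varepsilon,b,k_0)$ as a tower function of $b^2/\varepsilon^5$ and $\eta(\varepsilon,b,k_0) = 1/K$.

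To upgrade the conclusion to the statement that each part is in a regular pair with at least $(1-\varepsilon)k$ other parts, I would simply apply the version just proved with $\varepsilon' = \varepsilon^2/3$ in place of $\varepsilon$. Since the resulting partition has fewer than $\varepsilon' k^2$ irregular pairs, the number of parts that participate in more than $\varepsilon k$ irregular pairs is at most $2\varepsilon' k/\varepsilon = \tfrac{2}{3}\varepsilon k$. Moving these offending parts into the exceptional class $V_0$ increases $|V_0|$ by at most $\tfrac{2}{3}\varepsilon n$, so the new exceptional class still has size at most $\varepsilon n$, and the remaining parts are of equal size and enjoy the stronger conclusion.

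The only genuine obstacle is the bookkeeping of the constants: $\eta$ must be chosen small enough that the upper-uniformity hypothesis applies to \emph{every} pair of parts appearing in \emph{every} iterate of the refinement (not just the final partition), so that fact (b) uniformly caps $\mathrm{ind}$; this forces $\eta$ to be dictated by the final bound $K$ rather than by the initial $k_0$. Once this is arranged, everything else is routine adaptation of the classical argument, and the result is exactly the form of the sparse regularity lemma quoted from~\cite{Kohayakawa, KoRo}.
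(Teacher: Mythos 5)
The paper does not prove Theorem~\ref{thm_regularity} at all: it is quoted as a known form of the sparse regularity lemma, with the reader referred to \cite{Kohayakawa, KoRo}, and the only comment offered is that the ``minimum degree'' formulation (each part regular with at least $(1-\varepsilon)k$ others) follows from the usual formulation of those references by standard techniques. Your sketch is therefore not competing with an argument in the paper, but it is a faithful reconstruction of the cited proof: the $p^{-2}$-normalized mean-square-density index, the defect Cauchy--Schwarz increment of order $\varepsilon^5$, the observation that upper-uniformity caps the index at about $b^2$ (which is precisely why $\eta$ must be tied to the final bound $K$ rather than to $k_0$), and the final pass that discards parts with too many irregular partners.

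One small quibble on the conversion step. With $\varepsilon' = \varepsilon^2/3$ you bound the number of parts having more than $\varepsilon k$ irregular partners by $\tfrac{2}{3}\varepsilon k$ and discard them; but after discarding, the surviving partition has $k' < k$ classes, and the conclusion you need is that each surviving class has at most roughly $\varepsilon k'$ irregular partners, not $\varepsilon k$. Since $\varepsilon k > \varepsilon k'$, the estimate is a hair too weak as written. This is repaired by the usual bookkeeping: discard the classes with more than, say, $(2\varepsilon/3)k$ irregular partners, choose $\varepsilon'$ on the order of $\varepsilon^2/10$, and run the base lemma with $k_0$ replaced by something like $\max\{k_0, 10/\varepsilon\}$ so that the $-1$ from ``other parts'' is negligible. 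None of this changes the structure of the argument, and the overall plan is sound.
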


The version of the regularity lemma stated above is slightly different from those given in~\cite{Kohayakawa, KoRo},
which say that the total number of irregular pairs is at most $\varepsilon k^2$. However,
by using some standard techniques, one can derive the `minimum degree' version from the results in~\cite{Kohayakawa, KoRo}.

\subsection{Typical vertices and super-regularity}

\label{subsection_typical}

We start this section by introducing the notions of typical vertices and triples.

\begin{DFN}
  \label{dfn_typical}
  Let $(V_1, V_2, V_3)$ be a triple of sets (not necessarily regular) with densities $d_{ij}p$ between $V_i$ and $V_j$.
  \begin{enumerate}[(A)]
  \item
    Fix a vertex $v \in V_1$ and for $i \in \{2, 3\}$, let $N_i = N(v) \cap V_i$. We say that $v$ is \emph{$\varepsilon$-typical} if for $i \in \{2, 3\}$,
    \begin{enumerate}[(i)]
      \setlength{\itemsep}{1pt}
      \setlength{\parskip}{0pt}
      \setlength{\parsep}{0pt}
    \item
      $|N_i| = (1 \pm \varepsilon)d_{1i}p|V_i|$ and
    \item
      there exists $N_i'\subset N_i$ satisfying $|N_i'| \ge (1 - \varepsilon)|N_i|$ such that $(N_2', N_3')$ is an $(\varepsilon,p)$-regular pair with density $(1 \pm \varepsilon)d_{23}p$.
    \end{enumerate}
  \item
    The triple $(V_1,V_2,V_3)$ is \emph{$\varepsilon$-typical} if it is $(\varepsilon,p)$-regular and for each $i$, all but at most $\varepsilon|V_i|$ vertices in $V_i$ are $\varepsilon$-typical.
  \end{enumerate}
\end{DFN}

\begin{REM}
  Since the property of being $\varepsilon$-typical depends not only on $\varepsilon$ but also on $p$, we should rather speak of $(\varepsilon, p)$-typical vertices and triples. Nevertheless, since the parameter $p$ will be always clear from the context, we will suppress it from the notation for the sake of brevity.
\end{REM}

It turns out that an overwhelming majority of all regular triples are also typical. The following lemma, which is a straightforward generalization of~\cite[Lemma 5.1]{GeSt}, makes the above statement precise. We omit its proof as it can be easily read out from the proof of~\cite[Lemma 5.1]{GeSt}.

\begin{LEMMA} \label{lemma_typicalregulartriple}
  For all positive $\beta$, $\delta$, $\varepsilon'$, and $\xi$, there exist constants $\varepsilon_0(\beta, \delta, \varepsilon')$ and $C(\delta, \varepsilon', \xi)$ such that if $\varepsilon \leq \varepsilon_0$, $d_{12}, d_{13}, d_{23} \geq \delta$, $\xi n \leq n_1, n_2, n_3 \leq n$, and $p \geq Cn^{-1/2}$, then all but at most
  \[
  \beta^{\delta\xi^2n^2p} {n_1n_2 \choose d_{12}pn_1n_2} {n_1n_3 \choose d_{13}pn_1n_3} {n_2n_3 \choose d_{23}pn_2n_3}
  \]
  graphs in $\FAMILY{K_3}{(n_1, n_2, n_3)}{(d_{12}, d_{13}, d_{23})}{(\varepsilon,p)}$ are $\varepsilon'$-typical provided that $n$ is sufficiently large.
\end{LEMMA}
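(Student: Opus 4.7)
The plan is to adapt the argument of \cite[Lemma 5.1]{GeSt} to the three-partite setting, exploiting the factorization $\FAMILY{K_3}{(n_1,n_2,n_3)}{(d_{12},d_{13},d_{23})}{(\varepsilon,p)}=\mathcal{F}_{12}\times\mathcal{F}_{13}\times\mathcal{F}_{23}$, where $\mathcal{F}_{ij}$ denotes the set of $(\varepsilon,p)$-regular bipartite graphs on $V_i\times V_j$ of density $d_{ij}p$. By symmetry, it suffices to bound, for each $i\in\{1,2,3\}$, the number of triples in which more than $\tfrac{1}{3}\varepsilon'|V_i|$ vertices of $V_i$ fail to be $\varepsilon'$-typical. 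Since $|\mathcal{F}_{ij}|\leq\binom{n_in_j}{d_{ij}pn_in_j}$, one may work in the larger ambient model of uniformly random bipartite graphs with prescribed edge counts, on which standard concentration inequalities apply directly.

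For $i=1$, condition (A)(i) of Definition~\ref{dfn_typical} is essentially free: $(\varepsilon,p)$-regularity of $(V_1,V_j)$ already forces all but $2\varepsilon|V_1|$ vertices $v\in V_1$ to satisfy $|N(v)\cap V_j|=(1\pm\varepsilon)d_{1j}p|V_j|$ for $j\in\{2,3\}$, so setting $\varepsilon_0\leq\varepsilon'/8$ disposes of it. The main work is condition (A)(ii). Fix arbitrary $G_{12}\in\mathcal{F}_{12}$ and $G_{13}\in\mathcal{F}_{13}$, determining $N_2(v)$ and $N_3(v)$ for every $v\in V_1$, and regard $G_{23}$ as uniformly distributed over $d_{23}pn_2n_3$-edge bipartite graphs on $V_2\times V_3$. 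The central input is a quantitative inheritance statement, essentially the bipartite analogue of the argument in \cite{GeSt}: for every fixed $N_2\subset V_2$, $N_3\subset V_3$ with $|N_j|\geq\tfrac{1}{2}\delta\xi pn$, a uniform $G_{23}$ makes $(N_2,N_3)$ contain an $(\varepsilon',p)$-regular subpair $(N_2',N_3')$ with $|N_j'|\geq(1-\varepsilon')|N_j|$ and density $(1\pm\varepsilon')d_{23}p$ except on an exponentially small fraction of $G_{23}$.

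The main obstacle is establishing the quantitative inheritance claim with the exponent $\beta^{\delta\xi^2 n^2p}$ demanded by the statement. The core calculation is: for each candidate witness subpair $(N_2'',N_3'')\subset (N_2,N_3)$ with $|N_j''|\geq\varepsilon'|N_j|$, the edge count $e(N_2'',N_3'')$ in the uniform-edge model is hypergeometric about its mean $d_{23}p|N_2''||N_3''|$, and Chernoff--Hoeffding controls the deviation of the density from $d_{23}p$ by more than $\varepsilon' p$ with failure probability $\exp\bigl(-\Omega((\varepsilon')^2 d_{23}p|N_2''||N_3''|)\bigr)$. The union bound over the $e^{O(pn)}$ possible witnesses is absorbed exactly when $p\geq Cn^{-1/2}$ with $C=C(\delta,\varepsilon',\xi)$ large enough, matching the hypothesis of the lemma. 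Aggregating the per-witness estimate over $v\in V_1$ by a Markov-style double counting that exploits the fact that forcing $\varepsilon' n$ vertices to be simultaneously non-typical constrains $\Omega(n^2p)$ ``bits'' of the three bipartite graphs yields the stated exponent; tracking constants and setting $\varepsilon_0=\varepsilon_0(\beta,\delta,\varepsilon')$ small enough to absorb the resulting regularity- and typicality-losses completes the proof.
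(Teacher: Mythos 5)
The paper does not prove this lemma; it defers to \cite[Lemma~5.1]{GeSt} and omits the argument, so I can only assess your sketch on its own terms. The set-up is sensible and correctly identifies the moving parts: the factorization $\FAMILY{K_3}{\cdot}{\cdot}{\cdot}=\mathcal{F}_{12}\times\mathcal{F}_{13}\times\mathcal{F}_{23}$, dispatching~(A)(i) by regularity (though the degree error is $\varepsilon/d_{1j}\leq\varepsilon/\delta$, so $\varepsilon_0$ must depend on $\delta$, not just $\varepsilon'$), conditioning on $G_{12},G_{13}$ and treating $G_{23}$ as uniform on its edge-count slice, the hypergeometric Chernoff bound, and the observation that absorbing the union bound over subpair-witnesses is exactly what forces $p\geq Cn^{-1/2}$.

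The aggregation step is where the sketch has a genuine gap, and it is not a matter of tracking constants. Your per-vertex calculation shows that a fixed $v\in V_1$ is non-typical for at most an $\exp\bigl(-\Omega(p|N_2(v)||N_3(v)|)\bigr)=\exp(-\Omega(p^3n^2))$-fraction of $G_{23}$. A Markov-type double count over the $n_1$ vertices then bounds the number of bad triples by roughly $(\varepsilon')^{-1}\exp(-\Omega(p^3n^2))\prod_{ij}\binom{n_in_j}{d_{ij}pn_in_j}$, giving an exponent of order $p^3n^2$, not the $pn^2$ demanded. For $p\asymp n^{-1/2}$ these differ by a factor of $p^{-2}\asymp n$, and the stronger exponent is essential: the first-moment argument behind Proposition~\ref{prop_randomgraphtypical} needs $\beta^{\delta\xi^2n^2p}$ to dominate $\prod_{ij}\binom{n_in_j}{d_{ij}pn_in_j}p^{d_{ij}pn_in_j}\leq e^{3pn^2}$ even after a union bound over exponentially many placements of $(V_1,V_2,V_3)$, which a $\beta^{\delta\xi^2 n^2p^3}$ bound with fixed $\beta$ cannot do. Your sentence asserting that $\varepsilon'n$ simultaneously non-typical vertices ``constrain $\Omega(n^2p)$ bits of the three bipartite graphs'' is precisely the claim that requires proof and does not follow from the per-vertex estimate: the events $\{v\text{ non-typical}\}$ all probe the same $G_{23}$, and $(\varepsilon,p)$-regularity of $G_{12},G_{13}$ imposes no codegree condition, so the neighbourhood pairs $(N_2(v),N_3(v))$ may overlap almost entirely, in which case the joint constraint on $G_{23}$ is still only $\Theta(p^3n^2)$ bits. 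Closing the gap requires a genuine counting/deletion argument of the kind underlying \cite[Lemma~5.1]{GeSt}: one must argue either that the witness subpairs spread out and therefore constrain $\Omega(pn^2)$ of $G_{23}$, or that they cluster and therefore constrain $G_{12}$ and $G_{13}$ correspondingly strongly. Neither half of that dichotomy is present in your sketch.
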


The following proposition justifies why the notion of $\varepsilon$-typical triples can be useful for our purposes.

\begin{PROP} \label{prop_findtriangle}
  For every positive $\alpha$, $\delta$, and $p$, there exists an $\varepsilon(\alpha, \delta)$ such that every $\varepsilon$-typical $(\varepsilon, p)$-regular triple $(V_1, V_2, V_3)$ of minimum density at least $\delta p$ contains $(1-\alpha)\min_i|V_i|$ vertex-disjoint triangles.
\end{PROP}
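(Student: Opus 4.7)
The plan is to build the packing greedily, adding one triangle at a time, and to show that the procedure does not get stuck before $(1-\alpha)n$ triangles have been chosen, where $n = \min_i |V_i|$. Fix $\varepsilon = \varepsilon(\alpha,\delta)$ small, say $\varepsilon \le \alpha\delta/100$. After $t < (1-\alpha)n$ triangles have been selected, the leftover sets are $V_i' = V_i \setminus R_i$ with $|R_i| = t < (1-\alpha)|V_i|$, and the task reduces to exhibiting a triangle across $(V_1', V_2', V_3')$.

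First I would use the $(\varepsilon,p)$-regularity of $(V_1, V_i)$ for $i \in \{2,3\}$ to control which vertices of $V_1$ have atypically many neighbours in $R_i$. Applying regularity to $R_i$ (or, when $|R_i| < \varepsilon|V_i|$, to a superset of $R_i$ of size $\varepsilon|V_i|$), the set
\[ B_i = \bigl\{\, v \in V_1 : \deg(v, R_i) > \bigl(d(V_1, V_i) + \varepsilon p\bigr) \cdot \max(|R_i|, \varepsilon|V_i|) \,\bigr\} \]
has $|B_i| \le \varepsilon|V_1|$. Since at most $\varepsilon|V_1|$ vertices of $V_1$ are non-typical and $|R_1| < (1-\alpha)|V_1|$, at least $(\alpha - 3\varepsilon)|V_1| > 0$ vertices $v \in V_1'$ are simultaneously $\varepsilon$-typical and outside $B_2 \cup B_3$; pick any such $v$.

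For this $v$, Definition \ref{dfn_typical} provides subsets $N_i' \subseteq N(v) \cap V_i$ with $|N_i'| \ge (1-\varepsilon)^2 d(V_1, V_i)|V_i|$ and with $(N_2', N_3')$ an $(\varepsilon,p)$-regular pair of density $(1 \pm \varepsilon) d(V_2, V_3)$. A short calculation using $v \notin B_i$, $d(V_1, V_i) \ge \delta p$, and $|R_i| \le (1-\alpha)|V_i|$ (handling the two cases $|R_i| \ge \varepsilon|V_i|$ and $|R_i| < \varepsilon|V_i|$ separately) yields $|N_i' \cap V_i'| \ge (\alpha/2)|N_i'|$. Proposition \ref{prop_inheritregularity} then guarantees that $(N_2' \cap V_2', N_3' \cap V_3')$ is $(2\varepsilon/\alpha, p)$-regular of density at least $\delta p/2$, so its edge count is at least $(\delta p/2) \cdot |N_2' \cap V_2'| \cdot |N_3' \cap V_3'| = \Omega(\alpha^2 \delta^3 p^3 |V_2||V_3|)$, which exceeds $1$ once $|V_i|$ is sufficiently large in terms of $\alpha,\delta,p$ (a harmless assumption, as the conclusion is vacuous for $|V_i|$ too small to carry any triangle at all). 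Any such edge together with $v$ gives the required triangle across $(V_1', V_2', V_3')$, and iterating produces the packing.

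The main obstacle is the density bound $|N_i' \cap V_i'| \ge (\alpha/2)|N_i'|$: because $\varepsilon$ must depend only on $\alpha$ and $\delta$ and not on $p$, the estimates must be kept in terms of the normalised density $d(V_1, V_i)/p \ge \delta$, and the two regimes $|R_i| \ge \varepsilon|V_i|$ and $|R_i| < \varepsilon|V_i|$ have to be treated separately. Once that inequality is in hand, the rest is a routine application of regularity inheritance followed by counting edges in a positive-density regular pair.
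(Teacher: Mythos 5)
Your argument is correct and follows essentially the same approach as the paper's: greedily take triangles, and when stuck before $(1-\alpha)\min_i|V_i|$, pick an $\varepsilon$-typical vertex $v$ of the leftover triple whose degrees into the removed sets are controlled by regularity, restrict the sets $N_i'$ from Definition~\ref{dfn_typical} to the leftover parts, and use Proposition~\ref{prop_inheritregularity} to conclude that $(N_2'\cap V_2', N_3'\cap V_3')$ still has positive density and hence an edge (the paper instead restricts to the typical vertices $W_i\subset V_i'$ and locates $v$ via the $(\varepsilon',p)$-regularity of $(W_1,W_i)$, a cosmetic variation). One small remark: the closing parenthetical about requiring $|V_i|$ to be large in terms of $p$ is unnecessary and slightly off the mark --- once $(N_2'\cap V_2', N_3'\cap V_3')$ is nonempty and has density $\geq \delta p/2 > 0$, its edge count is a positive integer and hence at least $1$, with no further size assumption needed.
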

\begin{proof}
  Note that without loss of generality, we may assume that $\alpha \leq 1/2$. Furthermore, let $\varepsilon = \min\{\delta/4, \alpha/20\}$ and let $\varepsilon' = 2\varepsilon/\alpha$. Let us greedily remove triangles from $(V_1, V_2, V_3)$ until we cannot do it anymore and denote the remaining triple by $(V'_1, V'_2, V'_3)$. If $|V'_i| \leq \alpha|V_i|$ for some $i$, then there is nothing left to prove, so we may assume that $|V'_i| > \alpha|V_i|$ for all $i$. Let $W_i$ be the set of all those vertices in $V'_i$ that were $\varepsilon$-typical in the original triple and note that $|W_i| \geq |V'_i| - \varepsilon|V_i| \geq (\alpha/2)|V_i|$. By Proposition~\ref{prop_inheritregularity}, the triple $(W_1, W_2, W_3)$ is $(\varepsilon', p)$-regular and the density of each pair $(W_i, W_j)$ is at least $(d_{ij} - \varepsilon)p$. Since $\varepsilon' < 1/2$, there is a vertex $v \in W_1$ with $\deg(v,W_i) \geq (d_{ij}-\varepsilon-\varepsilon')p|W_i|$ for $i \in \{2, 3\}$. For $i \in \{2, 3\}$, let $N_i$ and $N'_i$ be the sets from the definition of an $\varepsilon$-typical vertex for $v$ and let $M_i = N_i \cap W_i$ and $M'_i = N'_i \cap W_i$. Since
  \begin{align*}
    |M'_i| & \geq |M_i| - |N_i \setminus N'_i| \geq (d_{1i} - \varepsilon - \varepsilon')p|W_i| - \varepsilon(1+\varepsilon)d_{1i}p|V_i| \\
    & \geq \big[ (1 - \varepsilon/\delta - \varepsilon'/\delta)(\alpha/2) - \varepsilon(1+\varepsilon) \big] d_{1i}p|V_i| \geq \varepsilon(1+\varepsilon)d_{1i}p|V_i| \geq \varepsilon|N'_i|
  \end{align*}
  and $(N'_2, N'_3)$ was $(\varepsilon,p)$-regular with density at least $(1-\varepsilon)\delta p$ and $(1-\varepsilon)\delta p > \varepsilon p$, the pair $(M'_2, M'_3)$ has positive density. It follows that $(W_1, W_2, W_3)$ contains a triangle, but this is impossible.
\end{proof}

\begin{REM}
  \label{rem_almost-spanning-packing}
  It is quite easy to see that the combination of Theorems~\ref{thm_regularity} and~\ref{thm_CoHa}, Lemma~\ref{lemma_typicalregulartriple} (see Proposition~\ref{prop_randomgraphtypical}), and Proposition~\ref{prop_findtriangle} implies the following statement. For all positive constants $\gamma$ and $\varepsilon$, there exists a $C$ such that if $p(n) \geq Cn^{-1/2}$, then a.a.s.~every subgraph $G \subset G(n,p)$ with $\delta(G) \geq (2/3+\gamma)np$ contains a triangle packing that covers all but at most $\varepsilon n$ vertices of $G$.
\end{REM}

The following concept will serve us as a generalization of super-regularity to the sparse setting.

\begin{DFN}
  A triple $(V_1, V_2, V_3)$ is \emph{$(\delta,\varepsilon,p)$-super-regular} if each pair $(V_i, V_j)$ is $(\varepsilon, p)$-regular with density at least $\delta p$ and for every $i$, all vertices in $V_i$ are $\varepsilon$-typical.
\end{DFN}

We close this section with the following proposition, which tells us how to trim a typical regular triple in order to get a super-regular one.

\begin{PROP} \label{prop_typicalstability}
  For all positive $\varepsilon'$ and $\delta$, there exists an $\varepsilon(\varepsilon',\delta)$ such that the following holds. Let $(V_1, V_2, V_3)$ be an $(\varepsilon,p)$-regular triple, where for each $i$ and $j$, the density of $(V_i, V_j)$ is $d_{ij}p$, where $d_{ij} \geq \delta$. For each $i$, let $X_i$ be an arbitrary subset of $V_i$ with $|X_i| \leq \varepsilon|V_i|$. Then every vertex $v \in V_1 \setminus X_1$ that is $\varepsilon$-typical and satisfies $\deg(v,X_j) \leq \varepsilon p |V_j|$ for every $j \in \{2,3\}$ becomes an $\varepsilon'$-typical vertex in $(V_1 \setminus X_1, V_2 \setminus X_2, V_3 \setminus X_3)$.
\end{PROP}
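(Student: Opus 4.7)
The plan is straightforward error-tracking: choose $\varepsilon \leq c \varepsilon' \delta$ for a small absolute constant $c$ and verify each clause in the definition of $\varepsilon'$-typicality for $v$ in the restricted triple $(V_1', V_2', V_3') := (V_1 \setminus X_1, V_2 \setminus X_2, V_3 \setminus X_3)$. Before examining the two clauses, I would record how the densities transform under the restriction. Since $|V_i'| \geq (1 - \varepsilon) |V_i|$, Proposition~\ref{prop_inheritregularity} (applied with $\varepsilon_2 = 1 - \varepsilon$) shows that $(V_i', V_j')$ has density $d_{ij} p \pm \varepsilon p$. Writing the new density as $\tilde d_{ij} p$ and using $d_{ij} \geq \delta$, this rephrases as $\tilde d_{ij} = (1 \pm \varepsilon/\delta) d_{ij}$, which is the key identity needed throughout for translating between old and new densities.

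For the degree condition, I would combine $|N(v) \cap V_j| = (1 \pm \varepsilon) d_{1j} p |V_j|$ from the original typicality with the assumed bound $\deg(v, X_j) \leq \varepsilon p |V_j|$ to get $|N(v) \cap V_j'| = (1 \pm \varepsilon) d_{1j} p |V_j| \pm \varepsilon p |V_j|$. Dividing by $\tilde d_{1j} p |V_j'|$ and plugging in $|V_j'| = (1 \pm \varepsilon)|V_j|$ together with $\tilde d_{1j} = (1 \pm \varepsilon/\delta) d_{1j}$ yields $|N(v) \cap V_j'| = (1 \pm \varepsilon') \tilde d_{1j} p |V_j'|$ whenever $\varepsilon \leq c \varepsilon' \delta$.

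For the regularity condition, let $N_j = N(v) \cap V_j$ and let $N_j' \subseteq N_j$ with $|N_j'| \geq (1-\varepsilon)|N_j|$ be the witness set from the original typicality, so $(N_2', N_3')$ is $(\varepsilon, p)$-regular of density $(1 \pm \varepsilon) d_{23} p$. Define $M_j' := N_j' \setminus X_j$. Because $|N_j' \setminus M_j'| \leq \deg(v, X_j) \leq \varepsilon p |V_j|$ while $|N_j'| \geq (1-\varepsilon)^2 d_{1j} p |V_j|$, a short calculation using $d_{1j} \geq \delta$ gives $|M_j'| \geq (1 - O(\varepsilon/\delta)) |N_j'|$; this in particular forces $|M_j'| \geq (1 - \varepsilon') |N(v) \cap V_j'|$. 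Applying Proposition~\ref{prop_inheritregularity} to $(N_2', N_3')$ with subsets $M_2'$ and $M_3'$ shows that $(M_2', M_3')$ is $(\varepsilon', p)$-regular, and its density $(1 \pm \varepsilon) d_{23} p \pm \varepsilon p$ differs from the ambient $\tilde d_{23} p$ by $O(\varepsilon) p \leq \varepsilon' \tilde d_{23} p$, as required.

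There is no real conceptual obstacle here; the only mild subtlety is that the proposition is phrased in terms of the densities of the restricted triple rather than the original one, so one has to absorb the $\varepsilon/\delta$ error arising from $\tilde d_{ij} = d_{ij} \pm \varepsilon$ whenever the density thresholds come into play. This is precisely why the final choice of $\varepsilon$ must depend on $\delta$ in addition to $\varepsilon'$, and why Proposition~\ref{prop_inheritregularity} is invoked twice — once for the ambient pairs and once within the witness subsets guaranteed by the original typicality.
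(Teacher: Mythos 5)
Your proposal is correct and follows essentially the same route as the paper's proof: both arguments verify the degree condition and the regularity/density condition of $\varepsilon'$-typicality directly by tracking the $O(\varepsilon p)$ and $O(\varepsilon/\delta)$ errors introduced by removing $X_i$, defining the new witness sets as $N_j'\setminus X_j$ and invoking Proposition~\ref{prop_inheritregularity} on $(N_2',N_3')$. The only small wrinkle is your phrasing that the ambient density estimate comes from applying Proposition~\ref{prop_inheritregularity} with $\varepsilon_2 = 1-\varepsilon$, which violates its hypothesis $\varepsilon_2 \leq 1/2$; but the estimate $\tilde d_{ij} = d_{ij} \pm \varepsilon$ follows immediately from the definition of $(\varepsilon,p)$-regularity since $|V_i'| \geq (1-\varepsilon)|V_i| \geq \varepsilon|V_i|$, so the argument is fine.
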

\begin{proof}
  Let $v \in V_1$ be any such vertex and for $j \in \{2, 3\}$, let $N_j = N(v) \cap V_j$. Since $v$ is $\varepsilon$-typical, $(1-\varepsilon)d_{1j}p|V_j| \leq |N_j| \leq (1+\varepsilon)d_{1j}p|V_j|$. Moreover, there exist subsets $N'_j \subset N_j$ satisfying $|N'_j| \geq (1-\varepsilon)|N_j|$ such that $(N'_2,N'_3)$ is an $(\varepsilon,p)$-regular pair with density $d_vp$, where $d_v \in [(1-\varepsilon)d_{23}, (1+\varepsilon)d_{23}]$. Moreover, let $M_j = N_j \setminus X_j$ and similarly let $M'_j = N'_j \setminus X_j$.

  For each $i$, let $W_i = V_i \setminus X_i$ and recall that $(1-\varepsilon)|V_i| \leq |W_i| \leq |V_i|$. For every $i$ and $j$, let $d'_{ij}p$ be the density of the pair $(W_i, W_j)$. Since $(V_i,V_j)$ is $(\varepsilon, p)$-regular, $d'_{ij} \in [d_{ij} - \varepsilon, d_{ij} + \varepsilon]$. It follows that
  \[
  |M_j| \leq |N_j| \leq (1+\varepsilon)d_{1j}p|V_j| \leq (1+\varepsilon)(1-\varepsilon)^{-1}(d'_{1j}+\varepsilon)p|W_j|.
  \]
  And by the given condition $\deg(v, X_j) \le \varepsilon p |V_j|$,
we have
  \[
  |M_j| \geq |N_j| - \varepsilon p|V_j| \geq (1 - \varepsilon - \varepsilon/\delta)d_{1j}p|V_j| \geq (1 - \varepsilon - \varepsilon/\delta)(d'_{1j}-\varepsilon)p|W_j|.
  \]
  Moreover, since $|N_j| \geq (1-\varepsilon)\delta p|V_j|$, we have
  \begin{align*}
    |M'_j| & \geq |N'_j| - \varepsilon p |V_j| \geq (1-\varepsilon)|N_j| - \varepsilon p |V_j| \geq (1-\varepsilon-\varepsilon/((1-\varepsilon)\delta))|N_j| \\
    & \geq (1-\varepsilon-\varepsilon/((1-\varepsilon)\delta))|M_j| \geq |M_j|/2.
  \end{align*}
  By Proposition~\ref{prop_inheritregularity}, the pair $(M'_2, M'_3)$ is $(2\varepsilon,p)$-regular with density $d'_vp$ satisfying
  \[
  (1-\varepsilon)(d'_{23}-\varepsilon) - \varepsilon \leq d'_v \leq (1+\varepsilon)(d'_{23}+\varepsilon) + \varepsilon.
  \]
  Therefore, if $\varepsilon$ is sufficiently small, then $\deg(v,W_j) = |M_j| \in [(1-\varepsilon')pd'_{1j}|W_j|, (1+\varepsilon')pd'_{1j}|W_j|]$, $|M'_j| \geq (1-\varepsilon')|M_j|$, and $(M'_2, M'_3)$ is $(\varepsilon',p)$-regular with density $d'_vp$, where $d'_v \in [(1-\varepsilon')d'_{23}, (1+\varepsilon')d'_{23}]$. It follows that $v$ is $\varepsilon'$-typical in $(W_1, W_2, W_3)$.
\end{proof}

\subsection{Good edges and good vertices}

\label{subsection_good}

As we established in Section~\ref{subsection_typical} (see Remark~\ref{rem_almost-spanning-packing}), imposing certain regularity conditions on the vertices of a regular triple suffices to guarantee the existence of an almost perfect triangle packing. In order to assure that a triangle-factor can be found, we will need to impose some conditions also on the edges of the triple. With hindsight (see the discussion in Section~\ref{subsection_outline34}), we now introduce the notions of \emph{good edges} and \emph{good vertices}.

\begin{DFN}
  \label{dfn_good}
  Let $(V_1, V_2, V_3)$ be a triple of sets (not necessarily regular) with densities $d_{ij}p$ between $V_i$ and $V_j$.
  \begin{enumerate}[(A)]
  \item
    We say that an edge between $V_2$ and $V_3$ is \emph{$\varepsilon$-good} if its endpoints have at least $(1 - \varepsilon)d_{12}d_{13}p^2|V_1|$ common neighbourhoods in $V_1$.
  \item
    We say that an $\varepsilon$-typical vertex $v \in V_1$ is {\em $\varepsilon$-good} if $(N(v) \cap V_2, N(v) \cap V_3)$ contains at most $\varepsilon d_{12}d_{13}d_{23}p^3|V_2||V_3|$ edges that are not $\varepsilon$-good.
  \item
    We say that the triple $(V_1, V_2, V_3)$ is \emph{$(\delta,\varepsilon,p)$-strong-super-regular} if it is $(\delta,\varepsilon,p)$-super-regular and for every $i$, all vertices in $V_i$ are $\varepsilon$-good.
  \end{enumerate}
\end{DFN}

Next, we show that super-regular triples are not very far from being strong-super-regular. More precisely, we prove that requiring a triple to be merely typical (recall Definition~\ref{dfn_typical}) and all pairs in this triple to have non-zero densities forces most of its edges and vertices to be good.

\begin{PROP}
  \label{prop_goodedges}
Let $\varepsilon$, $\varepsilon'$, and $\delta$ be positive constants satisfying $3\varepsilon + \varepsilon/\delta \leq \varepsilon'$ and let $(V_1, V_2, V_3)$ be an $\varepsilon$-typical $(\varepsilon,p)$-regular triple, where the density $d_{ij}p$ of each pair $(V_i, V_j)$ is at least $\delta p$. Then there are at most $4\varepsilon d_{23}p|V_2||V_3|$ edges between $V_2$ and $V_3$ which are not $\varepsilon'$-good.
\end{PROP}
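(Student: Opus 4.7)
The plan is to partition the bad edges according to whether their $V_2$-endpoint is $\varepsilon$-typical, and to bound each class separately. The non-typical endpoints form a set $X\subseteq V_2$ of size at most $\varepsilon|V_2|$; padding $X$ up to size exactly $\varepsilon|V_2|$ if necessary and invoking the $(\varepsilon,p)$-regularity of $(V_2,V_3)$ gives that at most $(d_{23}+\varepsilon)p\cdot\varepsilon|V_2||V_3|\leq 2\varepsilon d_{23}p|V_2||V_3|$ edges are incident to $X$ (here I tacitly use $\varepsilon\leq\delta$, which the hypothesis $3\varepsilon+\varepsilon/\delta\leq\varepsilon'$ forces once one normalises $\varepsilon'\leq 1$). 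So the real work is to show that at most $2\varepsilon d_{23}p|V_2||V_3|$ bad edges $uv$ have $u$ typical.

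For this, I would fix an $\varepsilon$-typical $u\in V_2$ and let $N_j := N(u)\cap V_j$ for $j\in\{1,3\}$, and let $N_j'\subseteq N_j$ be the subsets furnished by Definition~\ref{dfn_typical}, so that $|N_j'|\geq (1-\varepsilon)|N_j|$ and $(N_1',N_3')$ is $(\varepsilon,p)$-regular of density $(1\pm\varepsilon)d_{13}p$. Regularity of this pair says that at most $\varepsilon|N_3'|$ vertices $v\in N_3'$ satisfy $\deg(v,N_1')<\bigl((1-\varepsilon)d_{13}-\varepsilon\bigr)p|N_1'|$. Combining this with $d_{13}\geq\delta$, $|N_1'|\geq (1-\varepsilon)^2 d_{12}p|V_1|$, and the hypothesis $3\varepsilon+\varepsilon/\delta\leq\varepsilon'$ shows that, for every other $v\in N_3'$,
\[
|N(u)\cap N(v)\cap V_1|\;\geq\;\deg(v,N_1')\;\geq\;(1-\varepsilon')d_{12}d_{13}p^2|V_1|,
\]
so the edge $uv$ is $\varepsilon'$-good.

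Consequently, every bad edge $uv$ with $u$ typical has $v\in (N_3\setminus N_3')\cup L_u$, where $L_u\subseteq N_3'$ is the low-degree exceptional set above with $|L_u|\leq\varepsilon|N_3'|$. In particular the number of bad neighbours of $u$ in $V_3$ is at most $\varepsilon|N_3|+\varepsilon|N_3'|\leq 2\varepsilon|N_3(u)|$, and summing over all typical $u$ and using the trivial identity $\sum_{u\in V_2}|N_3(u)|=e(V_2,V_3)=d_{23}p|V_2||V_3|$ gives at most $2\varepsilon d_{23}p|V_2||V_3|$ bad edges with typical $V_2$-endpoint. Added to the earlier contribution from non-typical endpoints, this yields the claimed $4\varepsilon d_{23}p|V_2||V_3|$. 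The only delicate point is the chain of inequalities producing the $(1-\varepsilon')$ factor: the additive error $\varepsilon p$ coming from regularity is converted into the multiplicative error $\varepsilon/\delta$ via the lower bound $d_{13}\geq\delta$, while the remaining $3\varepsilon$ budget absorbs the three $(1-\varepsilon)$ factors introduced along the way.
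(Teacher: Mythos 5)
Your proof is correct and takes essentially the same approach as the paper's: both extract from each typical $v\in V_2$ the regular pair $(N_1',N_3')$ furnished by typicality, observe that all but $\varepsilon|N_3'|$ vertices of $N_3'$ yield $\varepsilon'$-good edges with $v$, and convert the additive regularity error into the multiplicative budget $3\varepsilon+\varepsilon/\delta$. The only difference is that the paper counts good edges from below and subtracts from the exact total $d_{23}p|V_2||V_3|$, which renders your separate treatment of edges at non-typical $V_2$-endpoints (and the ancillary observation $\varepsilon\le\delta$) unnecessary.
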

\begin{proof}
For an $\varepsilon$-typical vertex $v \in V_2$, let $N_1 = N(v) \cap V_1$ and $N_3 = N(v) \cap V_3$. Recall from Definition~\ref{dfn_typical} that $|N_i| \geq (1-\varepsilon)d_{2i}p|V_i|$ and that there exist $N_i' \subset N_i$ with $|N_i'| \ge (1-\varepsilon)|N_i|$ for $i \in \{1,3\}$ such that $(N_1', N_3')$ is $(\varepsilon,p)$-regular and has density at least $(1-\varepsilon)d_{13}p$. It follows that at least $(1-\varepsilon)|N_3'|$ vertices $w \in N_3'$ have at least $((1-\varepsilon)d_{13} - \varepsilon)p|N_1'|$ common neighbours with $v$ in $N_1'$. Since
\[
((1-\varepsilon)d_{13} - \varepsilon)p|N_1'| \ge (1-\varepsilon-\varepsilon/\delta)(1-\varepsilon)d_{13}p|N_1| \ge (1-3\varepsilon-\varepsilon/\delta)d_{12}d_{13}p^2|V_1|,
\]
and $3\varepsilon + \varepsilon/\delta \le \varepsilon'$, each such edge $\{v,w\}$ is $\varepsilon'$-good. Since there are at least $(1-\varepsilon)|V_2|$ typical vertices in $V_2$ and
\[
(1-\varepsilon)|N_3'| \geq (1-\varepsilon)^3d_{23}p|V_3| \geq (1-3\varepsilon)d_{23}p|V_3|,
\]
the total number of $\varepsilon'$-good edges between $V_2$ and $V_3$ is at least $(1-4\varepsilon)d_{23}p|V_2||V_3|$. Finally, since the number of edges between $V_2$ and $V_3$ is exactly $d_{23}p|V_2||V_3|$, the total number of non-$\varepsilon'$-good edges is at most $4\varepsilon d_{23}p|V_2||V_3|$.
\end{proof}

\begin{PROP}
  \label{prop_manygoodvertices}
  For every $\varepsilon'$ and $\delta$, there exists a positive $\varepsilon(\varepsilon', \delta)$ such that the following holds. Let $(V_1, V_2, V_3)$ be an $\varepsilon$-typical $(\varepsilon, p)$-regular triple with minimum density at least $\delta p$. Moreover, assume that the endpoints of no edge in $(V_2, V_3)$ have more than $4p^2|V_1|$ common neighbours in $V_1$. Then $V_1$ contains at most $\varepsilon'|V_1|$ vertices that are not $\varepsilon'$-good.
\end{PROP}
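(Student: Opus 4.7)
The plan is to split the vertices $v\in V_1$ that fail to be $\varepsilon'$-good into two kinds: those that are not $\varepsilon'$-typical, and those that are $\varepsilon'$-typical but for which $(N(v)\cap V_2, N(v)\cap V_3)$ contains more than $\varepsilon' d_{12}d_{13}d_{23}p^3|V_2||V_3|$ edges that are not $\varepsilon'$-good. I will pick $\varepsilon=\varepsilon(\varepsilon',\delta)$ small enough that each kind accounts for at most $\varepsilon'|V_1|/2$ vertices.

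The first kind is immediate. If $\varepsilon\le\varepsilon'$, then every $\varepsilon$-typical vertex is automatically $\varepsilon'$-typical: the ranges $(1\pm\varepsilon)$ lie inside $(1\pm\varepsilon')$, and $(\varepsilon,p)$-regularity of the witness pair $(N_2',N_3')$ from Definition~\ref{dfn_typical} implies $(\varepsilon',p)$-regularity. Hence the $\varepsilon$-typicality of the triple gives at most $\varepsilon|V_1|$ non-$\varepsilon'$-typical vertices in $V_1$.

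For the second kind, I first invoke Proposition~\ref{prop_goodedges} (choosing $\varepsilon$ so that $3\varepsilon+\varepsilon/\delta\le\varepsilon'$) to obtain a set $B$ of at most $4\varepsilon d_{23}p|V_2||V_3|$ edges between $V_2$ and $V_3$ that are not $\varepsilon'$-good. Then I double count the pairs $(v,e)\in V_1\times B$ for which both endpoints of $e$ lie in $N(v)$. Summing over $e$ and using the standing hypothesis that every edge in $(V_2,V_3)$ has at most $4p^2|V_1|$ common neighbours in $V_1$, the count is at most
\[
|B|\cdot 4p^2|V_1| \;\le\; 16\varepsilon\, d_{23}\,p^3|V_1||V_2||V_3|.
\]
On the other hand, each vertex of the second kind contributes at least $\varepsilon'd_{12}d_{13}d_{23}p^3|V_2||V_3|$ to the same count, so its cardinality is at most
\[
\frac{16\varepsilon}{\varepsilon' d_{12}d_{13}}\,|V_1| \;\le\; \frac{16\varepsilon}{\varepsilon'\delta^2}\,|V_1|.
\]

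Combining the two bounds, any $\varepsilon\le\varepsilon'$ satisfying $3\varepsilon+\varepsilon/\delta\le\varepsilon'$ and $\varepsilon+16\varepsilon/(\varepsilon'\delta^2)\le\varepsilon'$ will do, and such a choice clearly exists as a function of $\varepsilon'$ and $\delta$ alone. The argument is a straightforward double count, and there is no genuinely delicate step; the only thing to track is that the parameters in Proposition~\ref{prop_goodedges} are tuned so that the edges produced are not $\varepsilon'$-good (rather than good for some intermediate parameter), and that the common-neighbourhood hypothesis is what pays for the crucial upper bound on $\sum_{e\in B} c(e)$.
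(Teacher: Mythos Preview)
Your proof is correct and follows essentially the same approach as the paper: bound the non-$\varepsilon'$-good edges via Proposition~\ref{prop_goodedges}, then double count pairs (vertex in $V_1$, bad edge in its neighbourhood) using the $4p^2|V_1|$ common-neighbour hypothesis to cap the number of $\varepsilon'$-typical-but-not-$\varepsilon'$-good vertices, and finally add in the at most $\varepsilon|V_1|$ non-typical vertices. The paper's explicit choice $\varepsilon = \min\{\varepsilon'/4,\ \varepsilon'\delta/4,\ (\varepsilon'\delta)^2/32\}$ satisfies exactly the three constraints you wrote down.
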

\begin{proof}
  Let $\varepsilon = \min\{\varepsilon'/4, \varepsilon'\delta/4, (\varepsilon'\delta)^2/32\}$. By Proposition~\ref{prop_goodedges}, at most $4\varepsilon d_{23}p|V_2||V_3|$ edges in $(V_2,V_3)$ are not $\varepsilon'$-good. Let $\alpha|V_1|$ be the number of $\varepsilon$-typical vertices in $V_1$ that are not $\varepsilon'$-good. By definition, the neighbourhood of every such vertex contains at least $\varepsilon'd_{12}d_{13}d_{23}p^3|V_2||V_3|$ edges that are not $\varepsilon'$-good. Therefore, our assumption on the maximum number of common neighbours of the endpoints of edges in $(V_2, V_3)$ implies that
  \[
  \alpha|V_1| \cdot \varepsilon'd_{12}d_{13}d_{23}p^3|V_2||V_3| \leq 4p^2|V_1| \cdot 4\varepsilon d_{23}p|V_2||V_3|
  \]
and hence $\alpha \leq 16\varepsilon/(\varepsilon'd_{12}d_{13}) \leq
\varepsilon'/2$. Finally, since at most $\varepsilon|V_1|$ vertices
in $V_1$ are not $\varepsilon$-typical and $\varepsilon \leq
\varepsilon'/2$, the number of vertices in $V_1$ that are not
$\varepsilon'$-good is at most $\varepsilon'|V_1|$.
\end{proof}

We end this section by showing that the neighbourhood of every typical (good) vertex contains a subgraph with bounded maximum degree and many (good) edges.

\begin{PROP} \label{prop_goodtriangles}
  Let $\varepsilon$, $\delta$, and $p$ be positive constants with
  $\varepsilon < 1/2$. Let $(V_1, V_2, V_3)$ be a triple of sets such
  that for all $i$ and $j$, the density of $(V_i, V_j)$ is $d_{ij}p$, where
  $d_{ij} \geq \delta$. Then for every $\varepsilon$-typical vertex $v \in V_1$,
  there exist sets $N''_j \subset N(v) \cap V_j$ for $j \in \{2,3\}$
  such that
  \begin{enumerate}[(i)]
    \setlength{\itemsep}{1pt}
    \setlength{\parskip}{0pt}
    \setlength{\parsep}{0pt}
  \item \label{item_goodtriangles_i}
    there are at least $(1-6\varepsilon-2\varepsilon/\delta)d_{12}d_{13}d_{23}p^3|V_2||V_3|$ edges in $(N''_2,N''_3)$ and if $v$ is $\varepsilon$-good, then there are at least that many $\varepsilon$-good edges in $(N''_2, N''_3)$, and
  \item \label{item_goodtriangles_ii}
    for all $j$ and $k$ with $\{j,k\} = \{2,3\}$, no vertex in $N''_j$ has more than $(1+2\varepsilon/\delta)(1+\varepsilon^2)d_{1k}d_{23}p^2|V_k|$ neighbours in $N''_k$.
  \end{enumerate}
\end{PROP}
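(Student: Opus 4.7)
The plan is to construct $N''_2$ and $N''_3$ by starting from the $(\varepsilon,p)$-regular subneighbourhoods $N'_2, N'_3$ of $v$ guaranteed by $\varepsilon$-typicality, and then trimming off the small set of vertices whose degree across the pair is atypically large. Concretely, for $j \in \{2,3\}$ let $N_j := N(v) \cap V_j$ and let $N'_j \subset N_j$ with $|N'_j| \geq (1-\varepsilon)|N_j|$ be the sets from Definition~\ref{dfn_typical}(A), so that $|N_j| = (1 \pm \varepsilon)d_{1j}p|V_j|$ and $(N'_2, N'_3)$ is $(\varepsilon,p)$-regular with density $\tilde{d}_{23}p$ for some $\tilde{d}_{23} \in [(1-\varepsilon)d_{23}, (1+\varepsilon)d_{23}]$. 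For $\{j,k\} = \{2,3\}$, set
\[
B_j := \bigl\{u \in N'_j : \deg(u, N'_k) > (\tilde{d}_{23}+\varepsilon)p|N'_k|\bigr\} \quad \text{and} \quad N''_j := N'_j \setminus B_j.
\]
A standard one-sided consequence of $(\varepsilon,p)$-regularity of $(N'_j, N'_k)$ forces $|B_j| < \varepsilon|N'_j|$: otherwise the density of $(B_j, N'_k)$ would exceed $\tilde{d}_{23}p + \varepsilon p$, contradicting regularity.

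To verify~(ii), any $u \in N''_j$ satisfies
\[
\deg(u, N''_k) \leq \deg(u, N'_k) \leq (\tilde{d}_{23}+\varepsilon)p|N'_k| \leq \bigl((1+\varepsilon)d_{23}+\varepsilon\bigr)(1+\varepsilon)d_{1k}p^2|V_k|,
\]
where I used $\tilde{d}_{23} \leq (1+\varepsilon)d_{23}$ and $|N'_k| \leq |N_k| \leq (1+\varepsilon)d_{1k}p|V_k|$. Writing the additive $\varepsilon p$ as $(\varepsilon/d_{23})d_{23}p \leq (\varepsilon/\delta)d_{23}p$ and collecting the $(1+\varepsilon)$-factors then bounds the right-hand side by $(1+2\varepsilon/\delta)(1+\varepsilon^2)d_{1k}d_{23}p^2|V_k|$ provided $\varepsilon$ is sufficiently small.

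For~(i), regularity and the size estimates give
\[
e(N'_2, N'_3) \geq (1-\varepsilon)d_{23}p \cdot |N'_2||N'_3| \geq (1-\varepsilon)^5 d_{12}d_{13}d_{23}p^3|V_2||V_3|.
\]
The edges lost in passing to $(N''_2, N''_3)$ lie in $e(B_2, N'_3) \cup e(N'_2, B_3)$; bounding each $e(B_j, N'_k)$ by applying $(\varepsilon,p)$-regularity to a set $B_j^+ \supseteq B_j$ of size exactly $\varepsilon|N'_j|$ yields a loss of at most $2(\tilde{d}_{23}+\varepsilon)p\cdot \varepsilon|N'_2||N'_3| \leq (2\varepsilon + 2\varepsilon/\delta + O(\varepsilon^2))\,d_{12}d_{13}d_{23}p^3|V_2||V_3|$. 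Combining, $e(N''_2, N''_3) \geq (1-6\varepsilon-2\varepsilon/\delta)d_{12}d_{13}d_{23}p^3|V_2||V_3|$ once $\varepsilon$ is small enough. Finally, if $v$ is $\varepsilon$-good, then by Definition~\ref{dfn_good}(B) the pair $(N_2, N_3)$ (hence $(N''_2, N''_3)$) contains at most $\varepsilon d_{12}d_{13}d_{23}p^3|V_2||V_3|$ non-$\varepsilon$-good edges, so subtracting yields the same lower bound on $\varepsilon$-good edges after absorbing the extra $\varepsilon$ into the constants.

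The argument is routine regularity; the only real work is the bookkeeping needed to match the precise constants in the statement (in particular the peculiar factor $(1+2\varepsilon/\delta)(1+\varepsilon^2)$ in~(ii) and the coefficient $6\varepsilon + 2\varepsilon/\delta$ in~(i)), which is why $\varepsilon$ must be chosen small enough in terms of $\delta$. No new structural ideas beyond one-sided regularity are needed.
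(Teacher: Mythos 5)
Your proof is correct and follows essentially the same route as the paper's: define $N''_j$ by discarding from $N'_j$ the vertices whose degree into $N'_k$ exceeds $(\tilde{d}_{23}+\varepsilon)p|N'_k|$, use one-sided $(\varepsilon,p)$-regularity of $(N'_2,N'_3)$ to bound the number of discarded vertices by $\varepsilon|N'_j|$, and chase constants. The paper obtains~(i) slightly more directly by applying regularity of $(N'_2,N'_3)$ to the subpair $(N''_2,N''_3)$ itself (legitimate since $|N''_j| \geq (1-\varepsilon)|N'_j| \geq \varepsilon|N'_j|$) to conclude $e(N''_2,N''_3) \geq (\tilde d_{23}-\varepsilon)p|N''_2||N''_3|$, rather than subtracting $e(B_2,N'_3)+e(N'_2,B_3)$ as you do; both routes are valid and essentially equivalent.

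One caution: the factor $(1+\varepsilon^2)$ in part~(ii) appears to be a typo in the paper for $(1+\varepsilon)^2$. The paper's own derivation produces $(1+2\varepsilon/\delta)(1+\varepsilon)^2 d_{1k}d_{23}p^2|V_k|$, which is exactly what your computation yields, and under that reading the bound holds for all $\varepsilon < 1/2$ without the extra \emph{``provided $\varepsilon$ is sufficiently small''} hedge you invoke. As literally stated, $(1+\varepsilon^2)$ is strictly smaller than $(1+\varepsilon)^2$, and the extra slack from the $2\varepsilon/\delta$ term does not in general absorb the difference when $\delta$ is close to $1$ --- so the hedge would not actually rescue the literal bound; you should simply prove the $(1+\varepsilon)^2$ version, which is what all downstream uses require.
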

\begin{proof}
Fix an $\varepsilon$-typical vertex $v \in V_1$. For each $j \in
\{2,3\}$, let $N_j = N(v) \cap V_j$. Since $v$ is
$\varepsilon$-typical, there are $N'_j \subset N_j$ with $|N'_j|
\geq (1-\varepsilon)|N_j| \geq (1-\varepsilon)^2pd_{1j}|V_j|$ such
that $(N'_2,N'_3)$ is $(\varepsilon,p)$-regular with density
$d'_{23}p$, where $(1-\varepsilon)d_{23} \leq d'_{23} \leq
(1+\varepsilon)d_{23}$. Let $N''_j$ be the set of vertices in $N'_j$
that have at most $(d'_{23}+\varepsilon)p|N'_k|$ neighbours in
$N'_k$ and note that $|N''_j| \geq (1-\varepsilon)|N'_j|$ by
$(\varepsilon, p)$-regularity of $(N'_2, N'_3)$.
Since $1/d'_{23} \leq 1/((1-\varepsilon)d_{23}) \leq 2/\delta$, we have
  \[
  (d'_{23}+\varepsilon)p|N'_k| \leq (1+2\varepsilon/\delta)d'_{23}p|N_k| \leq (1+2\varepsilon/\delta)(1+\varepsilon)^2 d_{1k}d_{23}p^2|V_k|,
  \]
  and then (\ref{item_goodtriangles_ii}) follows. Note that $|N''_j| \geq (1-\varepsilon)|N'_j| \geq (1-\varepsilon)^2d_{1j}p|V_j|$ and
  \[
  e(N''_2,N''_3) \geq (d'_{23}-\varepsilon)p |N''_2||N''_3| \geq (1-2\varepsilon/\delta)d'_{23}p|N''_2||N''_3| \geq (1-2\varepsilon/\delta)(1-\varepsilon)^5 d_{12}d_{13}d_{23} p^3 |V_2||V_3|.
  \]
  Moreover, if $v$ is $\varepsilon$-good, then at most $\varepsilon d_{12}d_{13}d_{23}p^3|V_2||V_3|$ edges in $(N''_2,N''_3)$ are not $\varepsilon$-good. Now (\ref{item_goodtriangles_i}) follows.
\end{proof}

\subsection{Graph theory}

The following proposition, which we will be using several times in the proof of our main result, is a simple corollary from Hall's marriage theorem~\cite{Hall} and gives a sufficient condition for a bipartite graph to have a perfect matching.

\begin{PROP} \label{prop_Hall}
  Let $H$ be a bipartite graph on the vertex set $A \cup B$ with $|A| = |B|$. Suppose that there is an integer $L$ such that
  \begin{enumerate}[(i)]
  \setlength{\itemsep}{1pt}
  \setlength{\parskip}{0pt}
  \setlength{\parsep}{0pt}
  \item
    $|N(S)| \geq |S|$ for each $S \subset A$ with $|A \setminus S| \geq L$ and
  \item
    $|N(T)| \geq |T|$ for each $T \subset B$ with $|T| \leq L$.
  \end{enumerate}
  Then $H$ has a perfect matching.
\end{PROP}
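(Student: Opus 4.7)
The plan is to apply Hall's marriage theorem, so it suffices to verify the Hall condition $|N(S)| \geq |S|$ for every $S \subset A$. Condition~(i) handles the case $|A \setminus S| \geq L$ immediately, so the work is to cover the remaining case $|A \setminus S| < L$ using condition~(ii), which is a statement about \emph{small} subsets of $B$.

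The natural way to translate a statement about $S \subset A$ into a statement about $B$ is to look at $T := B \setminus N(S)$, since $N(T) \subset A \setminus S$ by definition. The idea then is: if $|N(S)| < |S|$, then because $|A| = |B|$ we have
\[
|T| = |B| - |N(S)| > |B| - |S| = |A \setminus S|,
\]
and $|N(T)| \leq |A \setminus S|$, so $T$ already violates the Hall condition on the $B$-side. The point of assumption~(ii) is to forbid exactly such a violation, and we aim to derive a contradiction.

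The only subtlety is that (ii) applies only to sets $T$ of size at most $L$, so we split two sub-cases. If $|T| \leq L$, then applying~(ii) to $T$ directly yields $|T| \leq |N(T)| \leq |A \setminus S|$, contradicting the strict inequality above. If instead $|T| > L$, we pass to an arbitrary $T' \subset T$ with $|T'| = L$; then $N(T') \subset N(T) \subset A \setminus S$, so $|N(T')| \leq |A \setminus S| < L = |T'|$, again contradicting~(ii).

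I do not expect any real obstacle: the statement is essentially a packaging of Hall's theorem in which (i) covers ``large'' sets and (ii) covers ``small'' sets via the dual side, and the trick of truncating $T$ to a subset of size $L$ bridges the two regimes without requiring anything beyond Hall and the hypothesis $|A| = |B|$.
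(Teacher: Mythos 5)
Your proof is correct. The paper states Proposition~\ref{prop_Hall} as a simple corollary of Hall's theorem and omits the proof entirely, so there is no argument in the paper to compare against; your argument --- verifying Hall's condition on the $A$-side, handling small $A\setminus S$ by passing to $T = B\setminus N(S)$ and, when $T$ is too large, truncating to a subset of size $L$ --- is the natural one, and both sub-cases check out.
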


Recall that the following theorem was proved by Corr{\'a}di and Hajnal~\cite{CoHa}.

\begin{THM} \label{thm_CoHa}
  Every graph on $n$ vertices with minimum degree at least $2n/3$ contains a perfect $K_3$-packing provided that $n$ is divisible by $3$.
\end{THM}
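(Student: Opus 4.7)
My plan is to prove the theorem by contradiction through an extremal argument on a maximum triangle packing. Assume $\delta(G) \geq 2n/3$ and $3 \mid n$ but $G$ has no $K_3$-factor, and let $\mathcal{P} = \{T_1,\ldots,T_k\}$ be a triangle packing of $G$ covering the maximum possible number of vertices. Write $U$ for the uncovered set, so $k < n/3$ and $|U| = n - 3k \geq 3$. Note immediately that $U$ is triangle-free, as otherwise $\mathcal{P}$ could be enlarged. The overall strategy is to exhibit, after possibly refining the choice of $\mathcal{P}$, some triangle $T_i$ and three uncovered vertices $x,y,z \in U$ such that $G[T_i \cup \{x,y,z\}]$ contains two vertex-disjoint triangles; replacing $T_i$ by these two triangles produces a packing of size $k+1$, a contradiction.

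The first step is a degree-counting argument to locate a triangle $T_i$ densely connected to $U$. Pick any three vertices $u_1,u_2,u_3 \in U$; their degrees sum to at least $2n$. Edges internal to $\{u_1,u_2,u_3\}$ or going to $U \setminus \{u_1,u_2,u_3\}$ contribute at most $6 + 3(|U|-3)$ to this sum, so the number of edges from $\{u_1,u_2,u_3\}$ to $\bigcup_i T_i$ is at least $2n - 3|U| + 3$. Dividing by $k = (n-|U|)/3$, some triangle $T_i$ receives at least $3(2n - 3|U| + 3)/(n - |U|)$ edges from $\{u_1,u_2,u_3\}$. When $|U|=3$ this ratio is essentially $6$, which, together with the three internal edges of $T_i$, makes the 6-vertex graph $G[T_i\cup\{u_1,u_2,u_3\}]$ so edge-dense that two disjoint triangles must exist.

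For larger $|U|$ the averaging weakens, so the second step is to select $u_1,u_2,u_3$ and $\mathcal{P}$ carefully. I would pick the triple in $U$ minimizing the number of edges into $U\setminus\{u_1,u_2,u_3\}$ (equivalently, maximizing edges into $\bigcup_i T_i$), and further pick among all maximum packings one that, say, maximizes $e(U)$ or minimizes $\sum_i e(U, T_i)$ in order to rule out pathological configurations. Once the right triple is in hand, a case analysis on the bipartite edge-pattern between $\{u_1,u_2,u_3\}$ and $T_i = \{a,b,c\}$---whether two of the $u_j$'s share a common neighbour in $T_i$, whether some $u_j$ is adjacent to all of $a,b,c$, whether the $u_j$'s have edges among themselves, and so on---shows that two disjoint triangles can always be assembled on these six vertices.

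The main obstacle is this final case analysis: showing that in every edge-pattern where the naive swap fails, the extremal choice of $\mathcal{P}$ or of the triple $u_1,u_2,u_3$ rules the pattern out. A useful auxiliary device is a \emph{rotation trick}: if a partial swap on $T_i$ produces another maximum packing $\mathcal{P}'$ with a different uncovered set $U'$, one re-runs the counting argument on $\mathcal{P}'$ and iterates. The delicate part of the proof is ensuring that the counting bound and the case analysis mesh cleanly for every value of $|U| \ge 3$, so that some augmenting swap is guaranteed to exist and the assumed maximality of $\mathcal{P}$ is contradicted.
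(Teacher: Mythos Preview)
The paper does not prove this theorem at all. Theorem~\ref{thm_CoHa} is stated in the preliminaries (Section~\ref{section_preliminaries}) purely as a citation of the classical Corr\'adi--Hajnal result~\cite{CoHa}; it is used as a black box (in Step~1 of the main argument and inside Proposition~\ref{prop_reducedmindegree}) and no proof or sketch is offered. So there is nothing in the paper to compare your proposal against.

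As for the proposal itself: the overall strategy---take a maximum triangle packing, pick three uncovered vertices, and use degree-counting to locate a triangle $T_i$ with enough edges to $\{u_1,u_2,u_3\}$ to force two disjoint triangles on six vertices---is indeed the skeleton of the classical proof. But what you have written is a plan, not a proof, and you explicitly flag the gap yourself. Two concrete points: (i) your averaging only gives a triangle receiving roughly six edges from $\{u_1,u_2,u_3\}$, and six edges plus the three edges of $T_i$ is \emph{not} enough in general to guarantee two disjoint triangles on six vertices (e.g., if each $u_j$ is joined to exactly the same two vertices of $T_i$ and the $u_j$'s are pairwise non-adjacent); one needs seven, so the extremal choice of $\mathcal{P}$ and of the triple must do real work. (ii) The actual Corr\'adi--Hajnal argument fixes an \emph{independent} triple in $U$ and, among all maximum packings, selects one extremal with respect to a secondary invariant, then argues that no $T_i$ can have at most six edges to the triple---this is where the case analysis bites. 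Your ``rotation trick'' paragraph gestures at this but does not carry it out, so as written the proposal is incomplete.
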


\subsection{Bounding large deviations}

Throughout the proof, we will extensively use the following standard estimate on the tail probabilities of binomial random variables, see~\cite[Appendix A]{AlSp}. We denote by $\Bi(n,p)$ the binomial random variable with parameters $n$ and $p$, i.e., the number of successes in a sequence of $n$ independent Bernoulli trials with success probability $p$.

\begin{THM}[Chernoff's inequality]
  Let $p \in (0,1)$ and let $n$ be a positive integer. Then for every positive $a$ with $a \leq 2np/3$,
  \[
  P\big(|\Bi(n,p) - np| > a\big) \leq \exp(-a^2/(6np)).
  \]
\end{THM}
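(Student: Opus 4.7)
The plan is to use the standard Cram\'er--Chernoff exponential-moment method, handling the upper and lower tails of $X = \Bi(n,p)$ separately and then combining them with a union bound. First I would compute $\Ex[e^{tX}] = (1-p+pe^t)^n$ and invoke the elementary inequality $1 - p + pe^t \leq \exp(p(e^t-1))$, valid for every real $t$, which yields the uniform bound $\Ex[e^{tX}] \leq \exp(np(e^t-1))$.

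For the upper tail, I would apply Markov's inequality to $e^{tX}$ with $t > 0$, obtaining $P(X \geq np + a) \leq \exp(np(e^t-1) - t(np+a))$, and then optimize over $t$ (the minimizer is $t = \log(1 + a/(np))$). This gives the Bennett-type inequality $P(X \geq np + a) \leq \exp(-np\, h(a/(np)))$ with $h(x) = (1+x)\log(1+x) - x$. Next I would invoke the standard analytic estimate $h(x) \geq x^2/(2 + 2x/3)$ for $x \geq 0$; combined with the hypothesis $a \leq 2np/3$, which keeps $2 + 2a/(3np)$ bounded by $3$, this reduces the upper-tail bound to $P(X \geq np + a) \leq \exp(-a^2/(3np))$. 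The mirror computation with $t < 0$ produces the even cleaner lower-tail bound $P(X \leq np - a) \leq \exp(-a^2/(2np))$, with no upper restriction on $a$ needed.

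A union bound then yields $P(|X - np| > a) \leq 2\exp(-a^2/(3np))$, and the final step is to absorb the leading factor of $2$ into the exponent so as to match the target form $\exp(-a^2/(6np))$. The only real obstacle is this last bit of constant bookkeeping: the absorption $2 \leq \exp(a^2/(6np))$ goes through as soon as $a^2 \geq 6np \log 2$, and for smaller $a$ the target bound $\exp(-a^2/(6np))$ is already bounded away from $0$, so the remaining regime is handled either by invoking $P(\cdot) \leq 1$ together with a direct anti-concentration check for the binomial or, alternatively, by tightening the constants in the preceding step. Everything else in the derivation is entirely routine, and indeed the authors simply cite \cite{AlSp} for this result.
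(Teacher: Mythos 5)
The paper itself does not prove this statement; it simply cites Alon and Spencer's appendix, so there is no internal argument to compare against, and your Cram\'er--Chernoff route is the standard one used there. Up to the union bound your derivation is correct: with $a \leq 2np/3$ the exponential-moment method together with $h(x) \geq x^2/(2+2x/3)$ gives $P(X \geq np+a) \leq \exp(-a^2/(3np))$, the mirror computation gives $P(X \leq np-a) \leq \exp(-a^2/(2np))$, and hence $P(|X-np|>a) \leq 2\exp(-a^2/(3np))$.

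The genuine gap is the final absorption. The inequality $2\exp(-a^2/(3np)) \leq \exp(-a^2/(6np))$ is equivalent to $a^2 \geq 6np\log 2$, so for $a \lesssim 2\sqrt{np}$ your reduction does not go through, and this is precisely the delicate regime. Neither of your two suggested repairs actually closes it: invoking $P(\cdot)\leq 1$ cannot work because $\exp(-a^2/(6np))<1$ for every $a>0$, and the ``direct anti-concentration check'' you gesture at would need a lower bound $P(|X-np|\leq a) \gtrsim a^2/(np)$ for $a$ of order $\sqrt{np}$, which is a local-limit-theorem-type estimate, not an elementary side remark. In fact the bound as literally stated already fails for small $n$: take $n=3$, $p=1/2$, $a=0.4\leq 2np/3$; every value of $\Bi(3,1/2)$ is at distance at least $0.5$ from $np=1.5$, so the left side equals $1$, while $\exp(-a^2/(6np))=\exp(-0.16/9)<1$. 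So some implicit assumption on $np$ is being used, and ``tightening the constants in the preceding step'' as written is an unjustified claim rather than a proof.
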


\section{Outline of the proof of Theorem~\ref{thm_main}}
\label{section_outline-proof}

Let $G$ be a subgraph of $G(n,p)$ with minimum degree at least $(2/3 + o(1))np$. Throughout this section, we will tacitly condition on a few events that hold in $G(n,p)$ asymptotically almost surely. The proof of Theorem~\ref{thm_main} breaks down into the following four simple steps.

\begin{enumerate}
\item
  Apply the sparse regularity lemma (Theorem~\ref{thm_regularity}) and Theorem~\ref{thm_CoHa} to partition the vertex set of $G$ into regular triples with positive density and a small exceptional set of vertices.
\item
  Remove from $G$ a collection of vertex-disjoint triangles so that all but at most $O(p^{-2})$ remaining vertices lie in balanced super-regular triples.
\item
  Decompose each of those super-regular triples into a triangle packing, a balanced strong-super-regular triple, and a set of $O(p^{-1})$ leftover vertices.
\item
  Find a triangle-factor in each strong-super-regular triple.
\end{enumerate}

Since step 1 is a straightforward application of the regularity lemma (Theorem~\ref{thm_regularity}) and Theorem~\ref{thm_CoHa},
we will only describe the basic ideas of steps 2, 3, and 4 in this section. The details
of these steps will be given in Sections \ref{section_cleanup1}, \ref{section_cleanup2},
and \ref{section_packingssregular}, respectively.

\subsection{Step 2}

\label{subsection_outline2}

In order to construct super-regular triples from the regular triples we obtained in step 1, we first move all non-typical vertices to the exceptional set $V_0$. Since we have no control over $V_0$ and $|V_0|$ can be linear in $n$, we need to cover most of it with vertex-disjoint triangles. At the same time, we do not want to use too many vertices from any of the regular triples in order not to destroy their structure, i.e., to keep them close to being super-regular. This will be achieved by an application of Lemma~\ref{lemma_leftovertriangles}, which allows us to find such triangles. After we absorb the exceptional vertices into a triangle packing, some triples in the remaining graph might become imbalanced. Since in order for any triple to have a triangle-factor (or at least an almost perfect triangle packing), the sizes of all three of its parts must be equal, we have to balance the sizes of the remaining triples. We will do that by adding to our triangle packing some triangles whose vertices lie in two different triples, see Lemma~\ref{lemma_balancingpartition}.  Finally, since at the beginning we removed all non-typical vertices from each triple and later we did not alter it too much, we can make every triple super-regular by deleting at most $O(p^{-1})$ of its vertices (see Proposition~\ref{prop_typicalstability}).

\subsection{Steps 3 and 4}

\label{subsection_outline34}

Our general strategy for finding a triangle-factor in a super-regular triple $(V_1, V_2, V_3)$ can be summarized as follows.

\begin{enumerate}[(i)]
\item
  For each $\{i, j\} \subset \{1, 2, 3\}$, randomly select a small set $M_{ij}$ of independent edges in $(V_i, V_j)$.
\item
  Find an almost perfect triangle packing that does not hit any endpoints of the edges in any $M_{ij}$.
\item
  Match the remaining vertices with the edges in the sets $M_{ij}$ in order to extend the triangle packing to a triangle-factor.
\end{enumerate}

Assume that the first two steps have been performed. Then, in order to verify Hall's condition (see Proposition~\ref{prop_Hall}) to prove that an appropriate matching can be found in (iii), we need to know, in particular, that the endpoints of each edge in $M_{23}$ have many common neighbours in the remaining part of $V_1$ and that each vertex in $V_1$ is incident to both endpoints of many edges in $M_{23}$ (and that similar conditions hold for other choices of indices). Therefore, it would be convenient if $M_{23}$ consisted only of good edges and $V_1$ contained only good vertices (see Section~\ref{subsection_good}). Unfortunately, super-regular triples can generally contain vertices that are not good. This is the reason why in step 3, we need to break down each super-regular triple into a triangle packing and a strong-super-regular triple.

Therefore, we will perform the above described process twice. First, in step 3, we will absorb all the non-good vertices into a small triangle packing by performing (i) and (iii), see Theorem~\ref{thm_decomposition}. In step 4, once we are left with a balanced strong-super-regular triple (after deleting at most $O(p^{-1})$ further vertices), we can finally perform (i)--(iii), now using only good edges to construct $M_{ij}$s, to find a triangle-factor inside this triple, see Theorem~\ref{thm_triangleblowup}.

\section{Properties of Random Graphs}
\label{section_prop-rand-graphs}

In this section we establish several properties of the random graph that
will be useful in later sections.

\begin{PROP} \label{prop_randomgraphproperties1}
For every positive real $\rho$, there exists a constant $C(\rho)$
such that if $p \ge C(\log n/n)^{1/2}$, then $G(n,p)$ a.a.s.~satisfies the
following properties.
\begin{enumerate}[(i)]
    \setlength{\itemsep}{1pt} \setlength{\parskip}{0pt}
  \setlength{\parsep}{0pt}
  \item Every vertex has degree $(1 \pm \rho)np$.
  \item Every pair of distinct vertices has $(1 \pm \rho)np^2$ common neighbours.
  \item For all $X,Y \subset V$ with $|X|,|Y| \ge \rho np$, we have $e(X,Y) = (1 \pm \rho)|X||Y|p$. In particular,
  $e(X) = e(X,X)/2 = (1 \pm \rho)|X|^2p/2$ for all $X$ of size at least $\rho np$.
\end{enumerate}
\end{PROP}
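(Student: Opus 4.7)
The plan is to derive all three properties via Chernoff's inequality (stated just above the proposition) together with an appropriate union bound. The hypothesis $p \geq C(\log n/n)^{1/2}$ enters only through the two inequalities $np \geq C(n \log n)^{1/2}$ and $np^2 \geq C^2 \log n$; taking $C = C(\rho)$ sufficiently large makes the Chernoff exponent dominate both $\log n$ and the entropy of the relevant union bound. A single $C(\rho)$ works for all three parts by taking the maximum of the three values produced below.

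For (i), I may assume $\rho \leq 1/2$, since the statement only gets weaker for larger $\rho$. Since $\deg(v) \sim \mathrm{Bi}(n-1, p)$, Chernoff with deviation $a = \rho np/3 \leq 2np/3$ gives
$\Pr[\,|\deg(v) - (n-1)p| > \rho np/3\,] \leq \exp(-c_\rho np)$,
which is $o(1/n)$ once $np \gg \log n$; a union bound over the $n$ vertices finishes the argument, with the slack $\rho np/3$ absorbing the negligible $|np - (n-1)p| = p$. Part (ii) is entirely analogous: the number of common neighbours of a fixed pair $u \neq v$ is $\mathrm{Bi}(n-2, p^2)$, so Chernoff gives failure probability $\exp(-c_\rho np^2) = n^{-c_\rho C^2}$, which beats a union bound over $\binom{n}{2}$ pairs for $C$ large in terms of $\rho$.

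Part (iii) is the main business. I fix $k, \ell \geq \rho np$ and consider a pair $(X,Y)$ with $|X| = k$, $|Y| = \ell$. If $X \cap Y = \emptyset$, then $e(X,Y) \sim \mathrm{Bi}(k\ell, p)$ and Chernoff directly yields
$\Pr[\,|e(X,Y) - k\ell p| > (\rho/2)\, k\ell p\,] \leq \exp(-c_\rho k\ell p)$.
For general $X, Y$, I use the decomposition
\[ e(X,Y) = 2\, e(X \cap Y) + e(X \cap Y, X \triangle Y) + e(X \setminus Y, Y \setminus X), \]
whose three summands are independent binomials supported on disjoint families of potential edges, with combined expectation $(k\ell - |X \cap Y|) p$. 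Since $|X \cap Y| \leq \min(k,\ell)$ while $k, \ell \geq \rho np$, the gap from $k\ell p$ is at most $k\ell p / (\rho np) = o(k\ell p)$, so applying Chernoff term-wise and adding the bounds gives the same $\exp(-c_\rho k\ell p)$ tail around $k\ell p$. The number of pairs with these sizes is at most $\binom{n}{k}\binom{n}{\ell} \leq \exp((k+\ell)\log n)$, and the ratio of the Chernoff exponent to this entropy is at least of order
$\rho^2 k \ell p / ((k+\ell)\log n) \geq \rho^3 n p^2 / \log n \geq \rho^3 C^2$,
which can be made as large as desired. Summing the resulting failure probability over all $k, \ell \in \{\lceil \rho np \rceil, \ldots, n\}$ (at most $n^2$ choices of sizes) still gives $o(1)$.

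The main obstacle is the union bound in (iii) in the hardest regime $k \approx \ell \approx \rho np$, where both the combinatorial count of pairs $(X,Y)$ and the Chernoff exponent are of the same order $n^{1/2}(\log n)^{3/2}$ up to constants depending on $\rho$ and $C$; the hypothesis $p \geq C(\log n/n)^{1/2}$ tips the balance in favour of concentration by exactly the right margin. The case $X \cap Y \neq \emptyset$ is a minor but genuine subtlety, cleanly handled by the decomposition above.
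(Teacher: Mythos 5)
The paper states Proposition~\ref{prop_randomgraphproperties1} without proof, treating it as a standard first-moment-plus-Chernoff fact; there is therefore no proof in the paper to compare against. Your argument is essentially the natural one and is correct in substance. Parts (i) and (ii) are clean applications of the stated Chernoff bound plus a union bound, and your observation that $np^2 \geq C^2\log n$ is exactly what makes the codegree union bound close is the right way to see where the hypothesis on $p$ enters. Your balance-of-exponents analysis for part (iii) — identifying $k \approx \ell \approx \rho np$ as the critical regime and computing that both the entropy and the Chernoff exponent scale like $n^{1/2}(\log n)^{3/2}$ — correctly isolates why $p \gtrsim (\log n/n)^{1/2}$ is the natural threshold for this statement.

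One small technical point deserves attention. For overlapping $X$, $Y$ you decompose
\[
e(X,Y) = 2\,e(X \cap Y) + e(X \cap Y, X \triangle Y) + e(X \setminus Y, Y \setminus X),
\]
which is correct with the paper's ordered-pair convention, and then write ``applying Chernoff term-wise and adding the bounds gives the same $\exp(-c_\rho k\ell p)$ tail.'' As written this does not quite follow from the Chernoff inequality the paper states, which is for a pure binomial and requires the deviation $a$ to satisfy $a \leq 2\mu/3$: if you allot each term an absolute deviation of order $\rho k\ell p$, that deviation may vastly exceed the first term's own mean $\binom{|X\cap Y|}{2}p$, and the stated bound no longer applies. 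Also, $2e(X\cap Y)$ is twice a binomial, not itself a binomial. The cleanest repair is to observe that $e(X,Y)$ is a sum of independent random variables each taking values in $\{0,1,2\}$ (one indicator per unordered pair of potential endpoints, with weight $2$ exactly for the pairs inside $X\cap Y$), with mean $\mu = (k\ell - |X\cap Y|)p \leq k\ell p$ and variance at most $2k\ell p$; a Chernoff/Bernstein bound for bounded independent summands then gives $\Pr\bigl[\,|e(X,Y)-\mu| > t\,\bigr] \leq 2\exp\bigl(-c\, t^2/(k\ell p)\bigr)$ for $t \leq k\ell p$, which is exactly the tail you want and feeds into your union bound unchanged. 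Alternatively, one can keep the three-term decomposition and give each term a relative (not absolute) deviation, handling separately the degenerate case in which one of the blocks has negligible mean by a crude deterministic bound. Either fix is routine, and everything else in the proposal is sound.
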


\begin{PROP} \label{prop_nontypicalvertices}
  For every $\rho \in (0,1/2)$, $G(n,p)$ satisfies the following.
  \begin{enumerate}[(i)]
  \item
    Let $D$ be a positive real. For a fixed set $W \subset V$, with probability $1 - e^{(1-\rho^2D/12)n}$,
    all but at most $nDp^{-1}/|W|$ vertices in $V \setminus W$ satisfy
    \[
    \deg(v, W) = (1 \pm \rho) |W|p.
    \]
  \item
    For every positive real $\xi$, there exists a constant $D(\rho, \xi)$ such that a.a.s.~the following holds.
    For all $W \subset V$ with $|W| \ge \xi n$, all but at most $Dp^{-1}$ vertices in $V \setminus W$ satisfy
    \[
    \deg(v, W) = (1 \pm \rho) |W|p.
    \]
  \end{enumerate}
\end{PROP}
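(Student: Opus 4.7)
For part (i), I would fix $W \subset V$ and, for each $v \in V \setminus W$, note that $\deg(v, W) \sim \Bi(|W|, p)$ and that these random variables are mutually independent across $v$ because they depend on disjoint sets of potential edges. Call $v$ \emph{bad} if $|\deg(v, W) - |W|p| > \rho |W|p$. Since $\rho < 1/2$, Chernoff's inequality with $a = \rho|W|p$ gives $q := P(v \text{ is bad}) \le \exp(-\rho^2|W|p/6)$. If $|W| \le Dp^{-1}$ there is nothing to prove since the allowed number of exceptions $nDp^{-1}/|W|$ is at least $n$, so assume $|W| > Dp^{-1}$ and put $m := \lceil nDp^{-1}/|W| \rceil$. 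By independence together with the standard estimate $\binom{n}{m} \le (en/m)^m$,
\[
P\big(\#\{\text{bad } v\} \ge m\big) \le \binom{n}{m}q^m \le \exp\!\big(m + m\log(n/m) - m\rho^2|W|p/6\big).
\]
Writing $t := |W|p/D \ge 1$, we have $m = n/t$ and $m\rho^2|W|p/6 = nD\rho^2/6$, so the exponent equals $n(1+\log t)/t - nD\rho^2/6$; using the elementary bound $(1+\log t)/t \le 1$ for $t \ge 1$, this is at most $n - nD\rho^2/6 \le n(1 - \rho^2 D/12)$, exactly the estimate in the statement.

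For part (ii), I would combine (i) with a union bound over all $W$ of size at least $\xi n$. Applying (i) with the constant $D_0 := D\xi$ in place of $D$, each such $W$ has at most $nD_0p^{-1}/|W| \le D_0/(\xi p) = Dp^{-1}$ bad vertices with probability at least $1 - \exp((1 - \rho^2 D_0/12)n)$. Since there are at most $2^n$ candidate sets $W$, a union bound gives total failure probability at most $\exp((1 + \log 2 - \rho^2 D_0/12)n)$, which is $o(1)$ as soon as $D_0 > 12(1 + \log 2)/\rho^2$, i.e., whenever $D > 12(1 + \log 2)/(\rho^2\xi)$.

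The argument is essentially a routine Chernoff plus union bound, and I do not foresee any genuine obstacle. The only point that requires a little care is the calibration of constants in part (i): the polynomial-in-$m$ prefactor $\binom{n}{m}$ coming from the union bound must be dominated by a portion of the Chernoff exponent $\exp(-nD\rho^2/6)$. The elementary inequality $(1+\log t)/t \le 1$ for $t \ge 1$, valid precisely in the regime $|W|p \ge D$ (the non-trivial case), is what makes this absorption work and yields the stated exponent.
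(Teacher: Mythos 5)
Your argument is correct, but for part~(i) it takes a genuinely different route from the paper's. The paper proceeds by pigeonhole plus aggregation: if $nDp^{-1}/|W|$ vertices are bad, then half of them---a set $B$ of size $nDp^{-1}/(2|W|)$---are bad in the same direction (all too-high or all too-low), so the single random variable $e(B,W)$, a sum of independent indicators with large mean $|B||W|p = Dn/2$, deviates from its mean by a $\rho$-fraction; one Chernoff application followed by a $2^n$ union bound over choices of $B$ gives $2^n e^{-\rho^2 Dn/12} \le e^{(1-\rho^2 D/12)n}$. You instead apply Chernoff to each individual degree $\deg(v,W)$, use the mutual independence of these degrees across $v$, and union-bound over the $\binom{n}{m}$ possible sets of $m$ bad vertices. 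Both routes are valid and give the required exponential bound; the paper's aggregation trick makes the exponent $-\rho^2 Dn/12$ fall out of a single Chernoff estimate with no further computation, whereas your per-vertex approach requires the small calibration that absorbs $(1+\log t)/t \le 1$ for $t=|W|p/D\ge 1$, but avoids the pigeonhole step, replaces the crude $2^n$ by $\binom{n}{m}$, and in fact yields the slightly sharper exponent $-\rho^2 Dn/6$. (As you note, the computation is only nontrivial when $D$ is large enough that the stated bound is not vacuous, and there it goes through; the monotonicity needed to justify plugging in the ceiling $m=\lceil n/t\rceil$ also holds in this regime.) Your treatment of (ii)---apply (i) with $D_0 = D\xi$ and union-bound over the at most $2^n$ choices of $W$---is the same as the paper's.
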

\begin{proof}
  To prove (i), as a first step, we fix a set $W \subset V$. We may assume that $Dp^{-1}n/|W| \le n$ as otherwise, the claim is vacuously true. Suppose that there are $Dp^{-1}n/|W|$ vertices $v \in V \setminus W$ such that $\deg(v,W) \neq (1 \pm \rho)|W|p$. Then there exists a set $B \subset V \setminus W$ of size $Dp^{-1}n/(2|W|)$ such that either $\deg(v,W) > (1+\rho)|W|p$ for all $v \in B$ or $\deg(v,W) < (1-\rho)|W|p$ for all $v \in B$. This clearly implies that $e(B,W) \neq (1 \pm \rho)|B||W|p$ for some $B$ as above. Since $e(B, W)$ is a sum of independent binomial random variables and $\Ex[e(B,W)] = |B||W|p = Dn/2$, by Chernoff's inequality,
  \[
  P\big( e(B, W) \neq (1 \pm \rho)|B||W|p \big) \le e^{-\rho^2Dn/12}.
  \]
  By the union bound, the probability that such a set $B$ exists is at most $2^ne^{-\rho^2Dn/12}$.

  Now that (i) is proved, we easily get (ii) by applying the union bound.
\end{proof}

\begin{PROP} \label{prop_smallexpansion}
For all $\xi \in (0,1)$, there exists a $C(\xi)$ such that if
$p \geq C(\log n/n)^{1/2}$, then a.a.s.~for every $x \in [1, \xi
n/2]$, $G(n,p)$ does not contain a set $W$ of $x$ vertices and a set
$E$ of $x$ independent edges outside $W$ (i.e., no edge in $E$ has
an endpoint in $W$) such that either the endpoints of each edge in
$E$ have at least $\xi np^2$ common neighbours in $W$ or each vertex
in $W$ is adjacent to both endpoints of at least $\xi np^2$ edges in
$E$.
\end{PROP}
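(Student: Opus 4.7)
The plan is to prove both cases of the proposition by a single union bound argument based on a common double-counting observation. Let $x \in [1, \xi n/2]$ be fixed, and consider any choice of a set $W \subset V$ of size $x$ together with a matching $E = \{e_1, \dots, e_x\}$ of size $x$ on $V \setminus W$. Define
\[
T(W, E) := \sum_{w \in W} \sum_{\{u,v\} \in E} \mathbf{1}\bigl[wu \in E(G),\ wv \in E(G)\bigr].
\]
If condition (a) in the statement holds, then summing the common-neighbour counts over $E$ yields $T(W, E) \geq x \cdot \xi n p^2$; if condition (b) holds, then summing over $W$ gives the same bound. Thus it suffices to show that a.a.s.\ no pair $(W,E)$ with $E \subset E(G(n,p))$ satisfies $T(W, E) \geq x \xi n p^2$ for any $x$ in the given range.

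The next step is to estimate, for fixed $(W, E)$, the probability that $E \subset E(G(n,p))$ and $T(W,E) \geq x \xi n p^2$. Since $W$ is disjoint from $V(E)$ and $E$ is a matching, the $x^2$ indicator variables $\mathbf{1}[wu,wv \in E(G)]$ (for $w \in W$, $\{u,v\}\in E$) involve pairwise disjoint pairs of potential edges, none of which coincide with edges of $E$. Hence, conditionally on $E \subset E(G(n,p))$, the random variable $T(W,E)$ is distributed as $\mathrm{Bi}(x^2, p^2)$ with mean $x^2 p^2$. Since $x \leq \xi n / 2$, the threshold $x \xi n p^2$ exceeds the mean by a factor of $\xi n / x \geq 2$, so a standard multiplicative Chernoff bound gives, for some absolute constant $c_0 > 0$,
\[
P\bigl[T(W,E) \geq x \xi n p^2 \mid E \subset E(G)\bigr] \leq \exp\bigl(- c_0\, x \xi n p^2 \cdot \log(\xi n / x)\bigr).
\]
Multiplying by $P[E \subset E(G)] = p^x$ gives the desired per-pair bound.

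Finally, the plan is to union bound over $x$, $W$, and $E$. The number of choices of $W$ is at most $\binom{n}{x} \leq (en/x)^x$, and the number of matchings $E$ of size $x$ on $V \setminus W$ is at most $\binom{\binom{n}{2}}{x} \leq (en^2/(2x))^x$, so the overall combinatorial factor is bounded by $(C_1 n/x)^{3x}$ for some absolute constant $C_1$. Combining this with the probability estimate, the total contribution from a given $x$ is at most
\[
\exp\Bigl( 3x \log(C_1 n/x) + x \log p - c_0\, x \xi n p^2 \log(\xi n/x) \Bigr).
\]
Since $x \leq \xi n/2$, both $\log(C_1 n/x)$ and $\log(\xi n/x)$ are at least $\log 2$, and in fact one can check that the ``negative'' term dominates provided $n p^2 \gg \log n$; taking $p \geq C(\log n/n)^{1/2}$ with $C = C(\xi)$ sufficiently large makes the per-$x$ contribution at most $n^{-2x}$, which sums over $x \geq 1$ to $o(1)$.

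The only real subtlety I anticipate is carrying the Chernoff bound uniformly across the full range $x \in [1, \xi n/2]$: for small $x$ the ratio $\xi n/x$ is huge and Chernoff is trivially enough, but for $x$ close to $\xi n/2$ the ratio is merely a constant, so one must verify that the factor $\mu = x^2 p^2 \gtrsim (\xi n)^2 p^2 / 4 \gg n \log n$ is large enough to absorb the $(C_1 n/x)^{3x}$ counting factor, which is where the $p \geq C(\log n/n)^{1/2}$ hypothesis is used in its strongest form. Splitting the analysis into the two sub-ranges $x \leq \xi n/4$ and $x > \xi n/4$ makes this bookkeeping clean.
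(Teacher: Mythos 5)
Your proof is correct and follows essentially the same route as the paper's: observe that either condition forces the number of vertex--edge "cherry" incidences to be at least $x\xi np^2$, note that this count is a $\mathrm{Bi}(x^2,p^2)$ random variable since the relevant potential-edge pairs are disjoint, then apply Chernoff and union-bound over $x$, $W$, and $E$. (One small slip: $\binom{n}{x}\binom{\binom{n}{2}}{x}$ is not bounded by $(C_1 n/x)^{3x}$ for large $x$ --- the crude bound $n^{3x}$ used in the paper is what actually holds --- but this does not affect the conclusion since the Chernoff exponent easily swallows the extra $x^x$ factor once $np^2 \gtrsim \log n$.)
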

\begin{proof}
  Fix $x$, $W$, and $E$ as in the statement of this proposition. For a vertex $w \in W$ and an edge $\{u,v\} \in E$, let $B(u,v,w)$ denote the event that $w$ is adjacent to both $u$ and $v$. Let $X$ be the random variable denoting the number of events $B(u,v,w)$ that occur in $G(n,p)$. Note that each of the  ``bad'' events described in the statement of this lemma implies that $X \geq \xi np^2x \geq 2x^2p^2$. Moreover, observe that $X \leq x^2$, so we can restrict our attention to the case $x \geq \xi np^2$. Since all $B(u,v,w)$ are mutually independent, $X$ has binomial distribution with parameters $x^2$ and $p^2$, and hence by Chernoff's inequality,
  \[
  P(X \geq \xi np^2x) \leq e^{-c\xi np^2x}
  \]
  for some absolute positive constant $c$. Since for each $x$, there are at most ${n \choose x}n^{2x}$ pairs $(W,E)$ with $|W|=|E|=x$, the probability that some ``bad'' event occurs is at most
  \[
  \sum_{x = \xi np^2}^{\xi n/2} {n \choose x}n^{2x}e^{-c\xi np^2x}.
  \]
  Finally, note that
  \[
  {n \choose x}n^{2x}e^{-c\xi np^2x} \leq e^{3x\log n - c\xi np^2x} \leq n^{-2},
  \]
  provided that $np^2 \geq (4/c\xi)\log n$.
\end{proof}

\begin{PROP} \label{prop_randomgraphproperties2}
Let $p \gg n^{-1/2}$.
For every positive reals $\varepsilon$ and $\rho$,
there exists a positive real $D(\varepsilon, \rho)$
such that $G(n,p)$ a.a.s.~satisfies the
following property. For every set $X$ with $|X| \ge Dp^{-2}$, there are at most $\varepsilon n^2p$ edges $\{v, w\}$ in
$G[V \setminus X]$ such that $v$ and $w$ do not have $(1 \pm \rho)|X|p^2$ common neighbours in $X$.
\end{PROP}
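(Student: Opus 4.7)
The plan is to fix an arbitrary set $X \subset V$ with $|X| = k \geq Dp^{-2}$, show that $P(B_X > \varepsilon n^2 p) \leq \exp(-cn)$ for some $c > \log 2$ depending on $D, \varepsilon, \rho$, and then take a union bound over $X$. The first step is a first-moment computation: for any pair $\{v, w\} \subset V \setminus X$, the event $\{v, w\} \in E(G(n,p))$ depends on a single potential edge, while $|N(v) \cap N(w) \cap X| = \sum_{x \in X} \mathbf{1}_{\{v, x\}, \{w, x\} \in E}$ depends on $2k$ potential edges disjoint from the first. Hence the two are independent, and the latter is distributed as $\Bi(k, p^2)$. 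Chernoff's inequality then gives
\[ \BBE[B_X] \leq 2 n^2 p \exp(-\rho^2 k p^2 / 3) \leq 2 n^2 p e^{-\rho^2 D / 3}, \]
which is much smaller than $\varepsilon n^2 p$ for $D = D(\varepsilon, \rho)$ sufficiently large.

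Next, I would upgrade this to a high-probability tail bound by splitting $B_X = B_X^+ + B_X^-$ according to the sign of the common-neighbour deviation. The events contributing to $B_X^+$, namely $A_e^+ = \{e \in E\} \cap \{|N(v) \cap N(w) \cap X| \geq (1+\rho)kp^2\}$, are monotone increasing in $G(n,p)$, placing us in the setting of Janson's upper-tail inequality. The key ingredient is the bound on the dependency sum $\Delta^* = \sum_{(e, e') : e \neq e', A_e^+ \not\perp A_{e'}^+} P(A_e^+ \cap A_{e'}^+)$: for $e = \{v, w\}$ and $e' = \{v, w'\}$ sharing a vertex, conditioning on $N(v) \cap X$ decouples the two common-neighbour conditions, giving $P(A_e^+ \cap A_{e'}^+) \leq p^2 e^{-\rho^2 D/3}$; summing over the $O(n^3)$ such pairs yields $\Delta^* = O(n^3 p^2 e^{-\rho^2 D/3})$. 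Applying Janson's inequality in the large-deviation regime then yields $P(B_X^+ \geq \varepsilon n^2 p / 2) \leq \exp(-c \varepsilon \rho^2 D n)$ for an absolute constant $c$. For $B_X^-$ the defining event is \emph{not} monotone in $G(n,p)$, so I would decouple it by first conditioning on the edges between $V \setminus X$ and $X$: this makes the set $\mathcal{L}$ of pairs with lower common-neighbour deviation deterministic, and then $B_X^- \sim \Bi(|\mathcal{L}|, p)$ conditionally. I would then control $|\mathcal{L}|$ by applying Janson's upper-tail inequality in the complementary random graph on the $X$-incident edges (where the relevant indicators are now monotone increasing), and finally handle the binomial sampling step by a standard Chernoff bound.

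Combining the two bounds and choosing $D$ large enough (depending only on $\varepsilon$ and $\rho$) that $c \varepsilon \rho^2 D > \log 2 + 1$, we obtain $P(B_X > \varepsilon n^2 p) \leq \exp(-(c\varepsilon \rho^2 D) n)$, and hence
\[ P(\exists X, |X| \geq D p^{-2}, B_X > \varepsilon n^2 p) \leq \sum_{k \geq D p^{-2}} \binom{n}{k} \exp(-(c \varepsilon \rho^2 D) n) \leq n \cdot 2^n \cdot \exp(-(c\varepsilon \rho^2 D)n) = o(1). \]
The main obstacle is the careful application of Janson's inequality in the large-deviation regime, which requires the dependency-sum estimate above; pedestrian Chernoff or Bernstein bounds would be inadequate, since the indicators contributing to $B_X^+$ are not independent and the trivial bound $B_X^+ \leq \binom{n}{2}$ is far too large to feed into those inequalities. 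A secondary difficulty --- the non-monotonicity of the indicator defining $B_X^-$ --- is circumvented by the conditioning trick sketched above, which reduces that case to a pair of cleanly monotone subproblems.
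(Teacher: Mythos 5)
Your observation that the event $\{v,w\}\in E$ and the co-degree $|N(v)\cap N(w)\cap X|$ are determined by disjoint sets of potential edges, and hence independent, is exactly the decoupling that drives the paper's proof. You also correctly identify the conditioning trick --- expose the edges between $V\setminus X$ and $X$ first, so that the collection of ``bad'' pairs becomes deterministic, and then the number of internal edges falling on bad pairs is binomial. But you only apply this trick to $B_X^-$, and that is where the proposal goes wrong in two ways.

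First, the conditioning trick handles $B_X^+$ and $B_X^-$ uniformly, because after exposing the cross-edges the set of pairs with an upper co-degree deviation is also deterministic; there was never any reason to split by sign and treat $B_X^+$ separately by Janson. Your closing claim that ``pedestrian Chernoff or Bernstein bounds would be inadequate'' is therefore mistaken: the paper's entire proof is pedestrian Chernoff, made applicable precisely by this two-round exposure.

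Second, the Janson route you propose for $B_X^+$ has a genuine gap. Janson's inequality in its standard form is a \emph{lower}-tail bound; the upper tail of a sum of positively correlated increasing indicators is a notoriously delicate object (the ``infamous upper tail'' in the survey of Janson and Ruci\'nski), and there is no off-the-shelf estimate of the Bernstein form $\exp(-t^2/(2(\mu+\Delta^*)+t/3))$ for such sums. With your stated dependency-sum bound $\Delta^*=O(n^3p^2e^{-\rho^2D/3})$, one would in any case have $\Delta^*\gg\varepsilon n^2p$ whenever $np$ is large, so even if such a bound were available, the conclusion $P(B_X^+\geq\varepsilon n^2p/2)\leq e^{-c\varepsilon\rho^2 Dn}$ does not simply fall out of it. The auxiliary step of controlling $|\mathcal L|$ ``by Janson in the complementary random graph'' is also not clean: a lower deviation of $|N(v)\cap N(w)\cap X|$ is not an increasing event in the complement of the cross-edge bipartite graph, since the quantities $|N_G(v)\cap N_G(w)\cap X|$ and $|\overline N_G(v)\cap\overline N_G(w)\cap X|$ do not sum to $|X|$.

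What the paper does instead, and what you should do, is control the total number of bad pairs (upper and lower together, after conditioning on the cross-edges) by a two-step Chernoff argument. Step one: apply Proposition~\ref{prop_nontypicalvertices}~(i) with $W=X$ to show that, apart from at most $D^{1/2}p^{-1}n/|X|$ vertices $v$, every $v\in V\setminus X$ has $\deg(v,X)=(1\pm\rho/2)|X|p$; these exceptional vertices account for at most $D^{1/2}p^{-1}n^2/|X|$ bad pairs even if they form bad pairs with everyone. Step two: for each remaining $v$, the set $W_v=N(v)\cap X$ has size $(1\pm\rho/2)|X|p$ and is determined by edges disjoint from $\{\{w,x\}:w\neq v\}$, so Proposition~\ref{prop_nontypicalvertices}~(i) applies again (now with $W=W_v$) to bound the number of $w$ with $\deg(w,W_v)\neq(1\pm\rho)|X|p^2$ by $O(D^{1/2}p^{-1}n/(|X|p))$. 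Summing gives $O(D^{-1/2}n^2)\leq\varepsilon n^2/2$ bad pairs with probability at least $1-e^{-2n}$, which is enough to beat the $2^n$ union bound over $X$. Then the internal-edge exposure gives $B_X\leq\varepsilon n^2p$ by Chernoff with probability $1-e^{-\Omega(\varepsilon n^2p)}\geq1-e^{-2n}$ (using $n^2p\gg n$).
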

\begin{proof}
The constant $D(\varepsilon, \rho)$ will be chosen later. Let $X$ be a fixed set of size at least $Dp^{-2}$. Without loss of generality we may assume that $\rho \leq 1/2$.

First expose the edges between $X$ and $V
\setminus X$ and call a pair of vertices $\{v,w\} \in V \setminus X$
\textit{bad} if $v$ and $w$ do not have $(1 \pm \rho)|X|p^2$ common neighbours in $X$.
By Proposition \ref{prop_nontypicalvertices} (i), with probability
$1 - e^{(1 - \rho^2D^{1/2}/50)n}$, there are
at most $D^{1/2}p^{-1}n/|X|$ vertices that do not satisfy $\deg(v, X) = (1 \pm \rho/2)|X|p$.
Even if each of these vertices forms bad pairs with all $n$ vertices,
there are at most $D^{1/2}p^{-1}n^2/|X|$ such bad pairs.
For each vertex that satisfies $\deg(v, X) = (1 \pm \rho/2)|X|p$, again
by Proposition \ref{prop_nontypicalvertices} (i), with probability
$1 - e^{(1 - \rho^2D^{1/2}/50)n}$, there are at most
$2D^{1/2}p^{-1}n/(|X|p)$ other vertices $w$ which do not have $(1 \pm \rho)|X|p^2$
common neighbours with $w$ in $X$. Therefore, if $D$ is sufficiently large, then
with probability at least $1 - e^{-2n}$, the total number of bad
pairs is at most
\[
\frac{D^{1/2}p^{-1}n^2}{|X|} + \frac{2D^{1/2}p^{-1}n^2}{|X|p} \le 3D^{-1/2}n^2 \le \varepsilon n^2 /2.
\]
Finally, expose the edges within $V \setminus X$. By Chernoff's inequality, with probability
$1 - e^{-\varepsilon n^2p/20}$, at most $\varepsilon n^2p$ bad pairs will form an edge.

Since $n^2p \gg n$, if we fix the set $X$, both of the above events happen with probability
at least $1 - e^{-2n}$. Since there are at most $2^n$ choices for $X$, we can take
the union bound over all choices of $X$ to derive the conclusion.
\end{proof}

Using the above propositions, we now prove the following generalization of \cite[Lemma 6.4]{HuLeSu}.

\begin{LEMMA} \label{lemma_leftovertriangles}
There exist $C$, $D$, and $\varepsilon$
such that if $p \ge C(\log n/n)^{1/2}$, then $G(n,p)$ a.a.s.~has the following property. For every
spanning subgraph $G' \subset G(n,p)$ with $\delta(G') \geq (2/3)np$ and every set
$T \subset V(G')$ with $|T| \leq \varepsilon n$, all but at most $Dp^{-2}$ vertices of $V \backslash
T$ are contained in a triangle of $G$ which does not intersect $T$.
\end{LEMMA}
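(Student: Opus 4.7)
My plan is to argue by contradiction. Let $B \subseteq V \setminus T$ denote the set of \emph{bad} vertices: those $v$ for which $N_{G'}(v) \setminus T$ is an independent set in $G'$ (equivalently, $v$ is in no triangle of $G'$ avoiding $T$). Suppose $|B| > Dp^{-2}$, with $D$ to be chosen large at the end. Combining $\delta(G') \geq (2/3)np$ with Proposition~\ref{prop_randomgraphproperties1}(i) and with Proposition~\ref{prop_nontypicalvertices}(i) applied to $W = T$ (union-bounded over the choice of $T$ of size at most $\varepsilon n$), one finds that all but $O(p^{-1})$ vertices $v$ satisfy $|N_{G'}(v) \setminus T| \geq (2/3 - O(\varepsilon))np$. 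For such $v \in B$, Proposition~\ref{prop_randomgraphproperties1}(iii) then gives $e_{G(n,p)}(N_{G'}(v) \setminus T) \geq (2/9 - O(\varepsilon))n^2p^3$; by the definition of \emph{bad}, these edges all lie in $H := E(G(n,p)) \setminus E(G')$.

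The heart of the proof is a double count of the pairs $(v, e)$ with $v \in B$ and $e \in H$ satisfying $e \subseteq N_{G'}(v) \setminus T$. From the preceding paragraph, the count is at least $(2/9 - O(\varepsilon))|B| n^2 p^3$. For the upper bound, each $e = \{u, w\} \in H$ is counted at most $|N_{G(n,p)}(u) \cap N_{G(n,p)}(w) \cap B|$ times. The minimum-degree hypothesis yields $|E(H)| \leq (1/6 + o(1))n^2p$, and Proposition~\ref{prop_randomgraphproperties2} applied with $X = B$ (permissible since $|B| \geq Dp^{-2}$) shows that all but $\varepsilon' n^2 p$ of the edges of $H$ with both endpoints outside $B$ contribute at most $(1 + o(1))|B|p^2$. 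The main-term upper bound is therefore $(1/6 + o(1))|B|n^2p^3$. Since $2/9 > 1/6$, choosing $\varepsilon$ sufficiently small leaves a strict positive gap of order $|B|n^2p^3$, which will contradict the lower bound once the error contributions are controlled.

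The main obstacle is controlling the error contributions, of which there are two types. First, edges of $H$ incident to $B$ number at most $|B|(1/3+o(1))np$ by minimum degree, and, with typical common-$B$-neighbourhood size $(1+o(1))|B|p^2$ coming from a Chernoff estimate on $\Bi(|B|, p^2)$, they contribute a total of $O(|B|^2 np^3)$; this is $o(|B|n^2p^3)$ provided $|B|/n \to 0$, which may be assumed after handling the case $|B| \geq \xi n$ separately (by applying Proposition~\ref{prop_smallexpansion} to a greedy matching extracted from $H$ inside the $B$-neighbourhoods). Second, the $\varepsilon' n^2 p$ exceptional edges of Proposition~\ref{prop_randomgraphproperties2} contribute at most $\varepsilon' n^2 p \cdot |B|$, which is $o(|B|n^2p^3)$ once $\varepsilon' = o(p^2)$; a careful reading of the proof of Proposition~\ref{prop_randomgraphproperties2} shows it remains valid for $\varepsilon'$ chosen this small when $p \geq C(\log n / n)^{1/2}$. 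Any residual atypical configurations not covered by the Chernoff estimate (edges with anomalously large common $B$-neighbourhood accumulating inside some small subset of $B$) are excluded by a further application of Proposition~\ref{prop_smallexpansion}: such a clustering would yield a forbidden $(W, E)$ pair witnessing a violation of the small-expansion property. The constants are chosen in the order $\xi, \varepsilon', D, \varepsilon, C$.
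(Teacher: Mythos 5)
Your plan is a genuine departure from the paper's argument. You posit a set $B$ of more than $Dp^{-2}$ bad vertices and double-count pairs $(v,e)$ with $v \in B$, $e \in H := E(G(n,p)) \setminus E(G')$, and $e \subseteq N_{G'}(v) \setminus T$, aiming to contradict a lower bound of $(2/9 - o(1))|B|n^2p^3$ with an upper bound of $(1/6 + o(1))|B|n^2p^3$. The paper instead fixes an arbitrary set $X_0 \subseteq V \setminus T$ of size $2D_0p^{-2}$ and directly lower-bounds the number of $G'$-triangles through $X$ that avoid $T''$, invoking Proposition~\ref{prop_randomgraphproperties2} only to conclude that all but $\varepsilon n^2 p$ edges of $G'[Y]$ have at least $(1-\varepsilon)|X|p^2$ common $X$-neighbours. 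That is a \emph{one-sided} application: edges with anomalous common $X$-neighbourhood simply contribute nonnegatively, so there is never any need to control how far they can overshoot. Your argument requires the other direction, and there the numbers do not close.

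Concretely, your claim that Proposition~\ref{prop_randomgraphproperties2} remains valid for $\varepsilon' = o(p^2)$ when $p \geq C(\log n/n)^{1/2}$ is false in the threshold regime. In its proof, the final Chernoff estimate has failure probability $e^{-\Theta(\varepsilon' n^2 p)}$ and must beat the $2^n$ choices of $X$, forcing $\varepsilon' \gtrsim 1/(np)$; the deterministic bound on bad pairs forces $D_{\ref{prop_randomgraphproperties2}} \gtrsim 1/\varepsilon'^2$. Together these give $D_{\ref{prop_randomgraphproperties2}} \gtrsim (np)^2$, so the hypothesis $|X| \geq D_{\ref{prop_randomgraphproperties2}}p^{-2} \gtrsim n^2$ is vacuous. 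Even with the sharper per-edge cap $|N_G(u) \cap N_G(w)| \leq (1+\rho)np^2$ from Proposition~\ref{prop_randomgraphproperties1}(ii), one needs $\varepsilon' \lesssim 1/\log n$, which still drives $D_{\ref{prop_randomgraphproperties2}}p^{-2} \gtrsim n\log n > n$. A similar problem afflicts the edges of $H$ incident to $B$: worst-case they contribute $|B| \cdot (1+\rho)np \cdot \min\{|B|, (1+\rho)np^2\} = \Theta(|B|n^2p^3)$, of the same order as the main term and not $o(\cdot)$, and the ``typical $\Bi(|B|,p^2)$'' appeal is neither uniform over the $2^n$ sets $B$ nor over the adversarial choice of $G'$. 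Finally, the invocation of Proposition~\ref{prop_smallexpansion} to absorb what remains is too vague to carry the weight: it requires a set $W$ and a set $E$ of \emph{equal} size with $E$ a matching lying entirely outside $W$, and you have not exhibited how the offending edges would yield such a configuration. So while the margin $2/9 > 1/6$ is correct and the double-count is a legitimate idea, the error control does not go through near $p \sim (\log n/n)^{1/2}$; the paper's one-sided lower-bound count is precisely what makes the argument robust at that density.
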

\begin{proof}
For the sake of brevity, denote $G(n,p)$ by $G$ and let $V = V(G)$. Let $\varepsilon$ be a small positive constant (we will fix it later), let $C = C_{\ref{prop_randomgraphproperties1}}(\varepsilon)$, and let $D_0(\varepsilon)$ be a constant satisfying $D_0 \ge \max
\{D_{\ref{prop_nontypicalvertices} (ii)}(\varepsilon, \varepsilon), D_{\ref{prop_randomgraphproperties2}}(\varepsilon, \varepsilon), 1\}$.

Without loss of generality we may assume that $|T| = \varepsilon
n$. Let $X_0 \subset V \setminus T$ be an arbitrary set of size
$2D_0p^{-2}$. By assuming that the events from Propositions
\ref{prop_randomgraphproperties1}, \ref{prop_nontypicalvertices}~(ii), and \ref{prop_randomgraphproperties2} hold, we will show that
there exists a triangle in $G'$ which intersects $X_0$ but not $T$.
This will prove that there are at most $2D_0p^{-2}$ vertices that are not
contained in triangles that do not hit $T$. Let $T'$ be the collection of all the vertices $v$
that satisfy $\deg(v, X_0 \cup T) \ge (1 + \varepsilon)|X_0 \cup T|p$ and note that
$|T'| \le D_0p^{-1}$ by Proposition~\ref{prop_nontypicalvertices}~(ii). Let $T'' = T \cup T'$ and $X =
X_0 \setminus T'$. Note that $|T''| \le 2\varepsilon n$ and $|X| \ge
|X_0| - |T'| \ge D_0 p^{-2}$. Let $D$ be the constant defined by $|X| = Dp^{-2}$ and note that
$D \ge D_0$. It suffices to show that there exists a triangle in $G'$ which
contains a vertex from $X$ but not from $T''$.

Let $Y = V \setminus (X \cup T'')$ and fix a vertex $x \in X$. Note that
\begin{align*}
  \deg_{G'}(x,Y) & = \deg_{G'}(x) - \deg_{G'}(x, X \cup T'') \geq \deg_{G'}(x) - \deg_{G'}(x, X_0 \cup T) - |T'| \\
  & \geq (2/3)np - (1+\varepsilon)|T \cup X_0|p - D_0p^{-1} \geq (2/3-3\varepsilon)np,
\end{align*}
where the last two inequalities follow from the fact that $x \not\in T'$ and our assumption on $p$. Finally, let $N_x = N_G(x) \cap Y$ and fix an arbitrary subset $N_x' \subset N_{G'}(x) \cap Y$ of size $(2/3 - 3\varepsilon)np$.

It suffices to show that the number of triangles $xy_1y_2$ in
$G'$ such that $x \in X$ and $y_1, y_2 \in N_x'$ is nonzero. Let this
number be $M$. To bound $M$ from below, first bound the number of
triangles $xy_1y_2$ in $G$ such that $x \in X, y_1, y_2 \in N_x$,
and $y_1y_2$ is an edge of the graph $G'$ (we will later subtract
the number triangles whose $y_1$ or $y_2$ is not in $N_x'$). Let this
number be $M_0$. Since $|X \cup T''|, |Y| \geq \varepsilon n$, by
Proposition~\ref{prop_randomgraphproperties1}~(iii), we have
\[ e_{G'}(Y) \geq e_{G'}(V) - e_{G'}(Y, X \cup T'') - e_{G'}(X \cup T'') \geq \frac{2}{3} \cdot \frac{n^2p}{2} - 4\varepsilon n^2p =\left(\frac{2}{3} - 8\varepsilon \right)\frac{n^2p}{2}.\]
By Proposition \ref{prop_randomgraphproperties2}, the number of
edges $\{v, w\}$ in $G'[Y]$ that form a triangle in $G$ with fewer than
$(1 - \varepsilon)D$ vertices in $X$ is at most $\varepsilon n^2p$
given that $D_0$ is large enough. Thus,
\[
M_0 \ge \Big(e_{G'}(Y) - \varepsilon n^2p \Big) (1 - \varepsilon)D \ge \left(\frac{2}{3} - 11\varepsilon \right)\frac{Dn^2p}{2}.
\]
To obtain a bound on $M$ from $M_0$, we can subtract the number of
triangles $xy_1y_2$ as above such that either $y_1$ or $y_2$ is not
in $N_x'$. Since $|N_x| \leq (1 + \varepsilon)np$ by
Proposition~\ref{prop_randomgraphproperties1}~(i), we have
\[
|N_x \setminus N_x'| = |N_x| - |N_x'| \leq (1 + \varepsilon)np - (2/3 - 3\varepsilon)np = (1/3 + 4\varepsilon)np.
\]
Thus, if $\varepsilon$ is small enough, by Proposition
\ref{prop_randomgraphproperties1}~(iii) we have,
\begin{align*}
 M & \ge M_0 - \sum_{x \in X} \left( e_{G'}(N_x \setminus N_x', N_x') + e_{G'}(N_x \setminus N_x')\right) \\
    &\ge M_0 - \sum_{x \in X} \left(1+\varepsilon\right)\left( \left(\frac{1}{3} + 4\varepsilon\right) \frac{2}{3} n^2p^3 + \left(\frac{1}{3} + 4\varepsilon\right)^2\frac{n^2p^3}{2} \right) \\
   &\ge \left(\frac{2}{3} - 11\varepsilon \right)\frac{Dn^2p}{2} - \sum_{x \in X} \left( \frac{5}{9} + 10\varepsilon \right)\frac{n^2p^3}{2} = \left(\frac{1}{9} - 21\varepsilon \right)\frac{Dn^2p}{2}.
\end{align*}
Therefore there exists a triangle as claimed, provided that $\varepsilon$ is sufficiently small.
\end{proof}

The following proposition establishes the fact that it is necessary to have $\Omega(p^{-2})$ vertices not covered by triangles. Its proof closely follows the argument from~\cite[Proposition 6.3]{HuLeSu}.

\begin{PROP} \label{prop_removetriangles}
  Let $\varepsilon > 0$. There exists a positive constant $C(\varepsilon)$ such that if $Cn^{-1/2} \le p \ll 1$, then $G(n,p)$ a.a.s.~contains a spanning subgraph of minimum degree at least $(1-\varepsilon)np$ such that $\Omega(p^{-2})$ of its vertices are not contained in a triangle.
\end{PROP}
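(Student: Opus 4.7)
Following the argument of~\cite[Proposition~6.3]{HuLeSu}, I will fix a set $S \subseteq V(G(n,p))$ of size $\lfloor c p^{-2} \rfloor$ for a small constant $c = c(\varepsilon) > 0$ to be chosen below, and construct the spanning subgraph $G'$ of $G(n,p)$ by removing every edge $\{u_1, u_2\}$ of $G(n,p)$ such that $u_1, u_2 \in N_{G(n,p)}(v)$ for some $v \in S$. By construction, for each $v \in S$ the set $N_{G'}(v) \subseteq N_{G(n,p)}(v)$ spans no edge in $G'$, so $v$ lies in no triangle of $G'$. Since $|S| = \lfloor c p^{-2} \rfloor = \Omega(p^{-2})$, this immediately provides the required family of triangle-free vertices, and the task reduces to verifying that $\delta(G') \geq (1-\varepsilon) np$.

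Set $\rho := \varepsilon/10$ and condition on the a.a.s.~event from Proposition~\ref{prop_randomgraphproperties1}: every vertex of $G(n,p)$ has degree $(1 \pm \rho) np$ and every pair of vertices has $(1 \pm \rho) n p^2$ common neighbours. Observe that an edge $\{u, w\}$ is removed exactly when $N(u) \cap N(w) \cap S \neq \emptyset$; hence the number of edges deleted at any fixed vertex $u$ is bounded by
\[
\sum_{v \in S \cap N(u)} |N(u) \cap N(v)| \leq (1+\rho)\,np^2 \cdot |S \cap N(u)|.
\]
The problem therefore reduces to bounding $|S \cap N(u)|$ uniformly in $u$. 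Since $|S \cap N(u)|$ has binomial distribution $\Bi(|S|, p)$ with mean $|S|p = c p^{-1}$, Chernoff's inequality together with a union bound over the $n$ vertices gives $|S \cap N(u)| \leq 2 c p^{-1}$ for all $u$ a.a.s. Substituting back,
\[
\deg_{G'}(u) \geq (1-\rho)\,np - 2c(1+\rho)\,np \geq (1-\varepsilon)\,np,
\]
provided $c$ is chosen small enough (e.g.\ $c = \varepsilon/10$), which completes the proof.

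\textbf{The main obstacle} is establishing the concentration bound $|S \cap N(u)| \leq 2 c p^{-1}$ simultaneously for all $n$ vertices. Chernoff's inequality handles this cleanly as long as the common mean $c p^{-1}$ is a sufficiently large multiple of $\log n$, which is why the hypothesis $p \geq C n^{-1/2}$ (with $C$ chosen in terms of $\varepsilon$) is comfortably strong for most of the range $Cn^{-1/2} \leq p \ll 1$. In the regime where $p^{-1}$ is not much larger than $\log n$, the set $|S|$ itself is only polylogarithmic in $n$, and one can replace Chernoff by the trivial bound $|S \cap N(u)| \leq |S|$ combined with a sharper analysis of the individual deletions, exactly as in~\cite[Proposition~6.3]{HuLeSu}. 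The remaining steps (verifying the high-probability events of Proposition~\ref{prop_randomgraphproperties1} and combining the bounds) are routine.
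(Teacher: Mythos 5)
Your overall strategy — fix a set $S$ of size $\Theta(p^{-2})$, destroy every triangle through $S$ by deleting edges, and control the degree loss via the sizes of common neighbourhoods and of $|S\cap N(u)|$ — is the same as the paper's, and the construction does ensure every $v\in S$ is triangle-free. However, there are two genuine gaps, one of which you do not notice at all.

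First, you condition on Proposition~\ref{prop_randomgraphproperties1}(ii), i.e., that every pair of vertices has $(1\pm\rho)np^2$ common neighbours. That proposition is only valid for $p \geq C'(\log n/n)^{1/2}$, whereas the statement to be proved allows $C n^{-1/2} \leq p < (\log n/n)^{1/2}$. In that lower range $np^2$ can be as small as a constant, and then the number of common neighbours of a pair is \emph{not} concentrated: a positive fraction of edges will have $\gg np^2$ common neighbours, so the inequality $\lvert N(u)\cap N(v)\rvert \leq (1+\rho)np^2$ you use for every $v\in S\cap N(u)$ simply fails. This is exactly why the paper's proof first passes from $G(n,p)$ to a subgraph $G$: it defines $H$ to be the subgraph of edges whose endpoints have more than $2np^2$ common neighbours, shows a.a.s.\ $\Delta(H) < (\varepsilon/4)np$, and works in $G = G(n,p) - H$. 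Without this preprocessing step your degree bound has no justification in the low-$p$ regime, which is precisely the new regime this paper is designed to cover.

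Second, even in the range where Proposition~\ref{prop_randomgraphproperties1}(ii) applies, the uniform bound $\lvert S\cap N(u)\rvert \leq 2cp^{-1}$ for all $u$ via Chernoff plus a union bound requires $cp^{-1} \gg \log n$, i.e., $p \ll 1/\log n$; it fails for $p$ tending to zero slowly (e.g.\ $p = 1/\log\log n$), which is still allowed by $p \ll 1$. You acknowledge this, but your proposed repair — replacing Chernoff with the trivial bound $\lvert S\cap N(u)\rvert\leq |S|$ — does not work: it gives a degree loss of order $|S|\cdot np^2 = \Theta(n)$, which is far larger than $\varepsilon np$ when $p = o(1)$. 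The paper's fix is structurally different and essential: it does \emph{not} try to bound $\deg(v,X)$ for every vertex. Instead it defines $W = \{v\notin X : \deg(v,X)\geq 2|X|p\}$, shows $|W| \leq (\varepsilon/4)np$ a.a.s.\ (using $P(v\in W)=e^{-\Omega(p^{-1})}$ and $p\gg e^{-\Omega(p^{-1})}$, valid for all $p \ll 1$), and then builds $G'$ so that vertices in $W$ lose only edges into $X\cup W$ (which is small), while for vertices outside $X\cup W$ the degree bound $\deg(y,X)<2|X|p$ is available by definition. Your deletion rule — remove \emph{every} edge lying in some $N(v)$, $v\in S$ — does not admit this case split, so it cannot be salvaged by the same trick without substantial restructuring.

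In short: the construction is in the right spirit, but the argument as written only works for $p\geq C(\log n/n)^{1/2}$ and $p\ll 1/\log n$, and your sketch of how to handle the remaining cases does not go through.
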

\begin{proof}
  Let $C$ be a constant satisfying $C \geq 2$ and $e^{C^2/15} \geq 8e/\varepsilon$. If $p \geq (\log n/n)^{1/2}$, then by Proposition~\ref{prop_randomgraphproperties1}, a.a.s.~$\delta(G(n,p)) \geq (1-\varepsilon/4)np$ and each pair of vertices of $G(n,p)$ has at most $2np^2$ common neighbours. If $Cn^{-1/2} \leq p < (\log n/n)^{1/2}$, then still a.a.s.~$\delta(G(n,p)) \geq (1-\varepsilon/4)np$, but $G(n,p)$ may contain some edges whose endpoints have more than $2np^2$ common neighbours. Let $H$ be the subgraph consisting of all such edges, and let $v$ be an arbitrary vertex. By Chernoff's inequality, the probability that $v$ and some other vertex have more than $2np^2$ common neighbours is at most $e^{-np^2/15}$. Therefore,
  \[
  P(\deg_H(v) \geq (\varepsilon/4)np ) \leq {n \choose (\varepsilon/4)np}\left(p \cdot e^{-np^2/15}\right)^{(\varepsilon/4)np} \leq \left(\frac{4enp}{\varepsilon np} \cdot e^{-C^2/15}\right)^{(\varepsilon/4)np} = o(n^{-1}),
  \]
  so a.a.s.~$\Delta(H) < (\varepsilon/4)np$. Finally, let $G = G(n,p) - H$. Clearly, the endpoints of every edge of $G$ have at most $2np^2$ common neighbours. Moreover, by Proposition~\ref{prop_randomgraphproperties1}~(i), we may assume that $\delta(G) > (1-\varepsilon/2)np$.

 Let $X$ be an arbitrary fixed set of $(\varepsilon/4)p^{-2}$ vertices of $G$ and let $W = \{v \notin X \colon \deg(v,X) \geq 2|X|p\}$. By Chernoff's inequality, the probability that a vertex $v$ belongs to $W$ is $e^{-\Omega(p^{-1})}$ and these events are independent for different vertices. Since $p \gg e^{-\Omega(p^{-1})}$, Chernoff's inequality implies that a.a.s.~$|W| \leq (\varepsilon/4)np$. Moreover, since our assumption on $p$ implies that $|X| \leq (\varepsilon/8) n$, we can apply Chernoff's inequality
 and deduce that a.a.s.~$\deg(u,X) \leq (\varepsilon/4) np$ for every vertex $u$.

Let $G'$ be the subgraph of $G$ obtained by deleting all edges within $X$, all edges between $X$ and $W$, and deleting edges incident to any $y \notin X \cup W$ according to the following rule -- for every triangle $xyz$ in $G$ with $x \in X$ and $z \notin X \cup W$, remove the edge $yz$. It is quite easy to see that no vertex of $X$ is contained in a triangle in $G'$. Let us now estimate $\delta(G')$. Since a vertex $u \in X \cup W$ lost only edges connecting it to $X$ and $W$, we have
\[
\deg_{G'}(u) > (1-\varepsilon/2)np - \deg(u,X) - \deg(u,W) \geq (1-\varepsilon/2)np - \deg(u,X) - |W| \geq (1-\varepsilon)np.
\]
Since a vertex $y \notin X \cup W$ is incident to at most $(\varepsilon/2)p^{-1}$ vertices $x \in X$ and it has at most $2np^2$ common neighbours with each such $x$, we then have
\[
\deg_{G'}(y) > (1-\varepsilon/2)np - (\varepsilon/4)p^{-1} \cdot 2np^2 \geq (1-\varepsilon)np.
\]
Thus $G'$ has the required properties.
\end{proof}

We end this section with two propositions whose proofs are farily standard and
are omitted.
Proposition \ref{prop_reducedmindegree} asserts that in a typical random graph
$G(n,p)$, the reduced graph of a regular partition of a subgraph $G \subset G(n,p)$
inherits the minimum degree condition that we impose on $G$. The final proposition,
Proposition \ref{prop_randomgraphtypical} can be proved using
Lemma~\ref{lemma_typicalregulartriple}, and asserts that every
regular triple in a random graph is typical.

\begin{PROP} \label{prop_reducedmindegree}
Let $\gamma>0$ and $p \gg n^{-1}$. There exist $\varepsilon_0(\gamma)$ and
$\delta_0(\gamma)$ such that if $\varepsilon \le \varepsilon_0$ and $\delta \le \delta_0$,
then the following holds asymptotically almost surely. Given a subgraph $G$ of $G(n,p)$, let $(V_i)_{i=1}^{k}$ be an
$(\varepsilon,p)$-regular partition of $G$ such that $|V_i| \le \varepsilon
n$ for all $i$ and every part forms an $(\varepsilon,p)$-regular
pair with at least $(1 - \varepsilon)k$ other parts. Let $R$ be its
$(\delta, \varepsilon, p)$-reduced graph. If $G$ has minimum degree
at least $(2/3 + \gamma)np$, then $R$ has minimum degree at least
$(2/3 + \gamma/2)k$.
\end{PROP}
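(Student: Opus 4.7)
The plan is a standard averaging argument applied to each part of the partition, counting edges of $G$ incident to $V_i$ by their target type. Throughout, I would condition on the asymptotically almost sure event that $G(n,p)$ enjoys the conclusions of Proposition~\ref{prop_randomgraphproperties1} with some small parameter $\rho = \rho(\gamma)$: namely, $e_{G(n,p)}(X,Y) \le (1+\rho)p|X||Y|$ and $e_{G(n,p)}(X) \le (1+\rho)p|X|^{2}/2$ for all (disjoint) sets of size at least $\rho np$, and $\deg_{G(n,p)}(v) \le (1+\rho)np$ for every $v$.

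Fix a vertex $i$ of $R$ and write $m := |V_i|$; since $V_1,\dots,V_k$ are equally sized we have $|V_j|=m$ for every $j\in[k]$ and $km\le n$. Summing the minimum-degree hypothesis over $v \in V_i$ and decomposing the sum according to which part a neighbour lies in gives
\[
\Bigl(\tfrac{2}{3}+\gamma\Bigr)npm \;\le\; \sum_{v\in V_i}\deg_G(v) \;=\; 2e_G(V_i)+e_G(V_i,V_0)+\sum_{j\in[k]\setminus\{i\}}e_G(V_i,V_j),
\]
where $V_0$ is the exceptional class. I would split the indices $j\in[k]\setminus\{i\}$ into three classes: (a) pairs $(V_i,V_j)$ that are not $(\varepsilon,p)$-regular, (b) those that are regular but of density below $\delta p$, and (c) those with $j\in N_R(i)$. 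The hypothesis gives $|(\text{a})|\le\varepsilon k$, so using $G\subseteq G(n,p)$ and the concentration event, the contribution from (a) is at most $\varepsilon k\cdot(1+\rho)pm^{2}\le(1+\rho)\varepsilon pmn$, from (b) at most $k\cdot\delta pm^{2}\le\delta pmn$, and from (c) at most $(1+\rho)|N_R(i)|pm^{2}$. The intra-part term is bounded by $2e_G(V_i)\le(1+\rho)pm^{2}\le(1+\rho)\varepsilon pmn$ since $m\le\varepsilon n$. For the $V_0$ term I would split on the size of $V_0$: if $|V_0|\ge\rho np$ then concentration yields $e_G(V_i,V_0)\le(1+\rho)pm|V_0|\le(1+\rho)\varepsilon pmn$, and if $|V_0|<\rho np$ then trivially $e_G(V_i,V_0)\le m|V_0|<\rho pmn$.

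Collecting the bounds gives
\[
\Bigl(\tfrac{2}{3}+\gamma\Bigr)npm \;\le\; \bigl(3(1+\rho)\varepsilon+\rho+\delta\bigr)pmn \;+\; (1+\rho)\,|N_R(i)|\,pm^{2},
\]
and after dividing by $pmn$ and using $n/m\ge k$, this rearranges to
\[
|N_R(i)| \;\ge\; \frac{k}{1+\rho}\Bigl[\tfrac{2}{3}+\gamma-3(1+\rho)\varepsilon-\rho-\delta\Bigr],
\]
which exceeds $(\tfrac{2}{3}+\tfrac{\gamma}{2})k$ provided we pick $\rho$, $\varepsilon_{0}$, $\delta_{0}$ sufficiently small in terms of $\gamma$ (concretely, so that $(5/3+\gamma/2)\rho+3(1+\rho)\varepsilon+\delta\le\gamma/2$). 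The only mildly delicate point, and the reason the proof is ``standard but not a one-liner,'' is the need to control $e_G(V_i,V_0)$ uniformly across the full range $0\le|V_0|\le\varepsilon n$, which the two-case split above handles cleanly; the rest is a bookkeeping of how many edges can land in each category of target pair.
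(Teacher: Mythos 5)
Your averaging argument is the right approach and the bookkeeping is correct; the three-way split of the non-exceptional partners, the $km\le n$ reduction, and the final constraint on $\rho$, $\varepsilon$, $\delta$ all check out. However, there is a genuine mismatch between the range of $p$ you use and the range the proposition claims. You condition on Proposition~\ref{prop_randomgraphproperties1}, which is only proved under $p\ge C(\log n/n)^{1/2}$, whereas Proposition~\ref{prop_reducedmindegree} asserts the conclusion for all $p\gg n^{-1}$. Part~(iii) of Proposition~\ref{prop_randomgraphproperties1} genuinely fails for $n^{-1}\ll p\ll(\log n/n)^{1/2}$: the pairs it controls can be as small as $\rho np$, and a union bound over all such pairs needs $np^2\gg\log n$, which is precisely the stronger hypothesis. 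So as written your argument only proves a weaker statement.

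The fix is to notice that you never need concentration down to sets of size $\rho np$. The cluster sizes satisfy $m=|V_i|\ge (1-\varepsilon)n/k$, and since $k$ is bounded whenever the proposition is invoked (it comes out of the sparse regularity lemma), you in fact have $m=\Omega(n)$. For linear-sized sets the bound $e_{G(n,p)}(X,Y)\le(1+\rho)p|X||Y|$ holds a.a.s.\ under the weaker hypothesis $np\to\infty$: the Chernoff failure probability is $e^{-\Omega(\rho^2|X||Y|p)}=e^{-\Omega(\rho^2 n^2 p)}$, which beats the $4^n$ union bound as soon as $np$ is a large constant. The same remark applies to the $V_0$ term: rather than your two-case split around the threshold $\rho np$, it is cleaner to split around a linear threshold $c n$ and, in the small case, use a Chernoff upper-tail bound for $e_{G(n,p)}(V_i,V_0)$ against the fixed target $(1+\rho)\varepsilon pmn$ (which exceeds the mean by a constant factor once $|V_0|\le\varepsilon n/2$, say), again only requiring $np\to\infty$. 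Note also that your argument tacitly uses $m=\Omega(n)$ --- equivalently $k=O(1)$ --- which is not literally guaranteed by the hypotheses of Proposition~\ref{prop_reducedmindegree} as stated but is always the case in its applications; it would be worth making this explicit, since without a bound on $k$ the statement is in fact false (consider a partition into singletons).
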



\begin{PROP} \label{prop_randomgraphtypical}
Let $p \gg n^{-1/2}$. For all positive $\varepsilon'$, $\delta$, and $\xi$,
there exists a constant $\varepsilon_0(\varepsilon',\delta)$ such that a.a.s.~in $G(n,p)$,
every copy of a graph from $\FAMILY{K_3}{(n_1, n_2, n_3)}{(d_{12},
d_{23}, d_{31})}{(\varepsilon,p)}$ is $\varepsilon'$-typical provided that
$\varepsilon \le \varepsilon_0$, $d_{12}, d_{23}, d_{31}
\ge \delta$, and $n_1, n_2, n_3 \ge \xi n$.
\end{PROP}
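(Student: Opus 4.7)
The plan is to derive Proposition~\ref{prop_randomgraphtypical} from Lemma~\ref{lemma_typicalregulartriple} by a direct first-moment/union-bound argument. Given $\varepsilon'$, $\delta$, $\xi$, I would fix any $\beta \in (0,1)$, say $\beta = 1/4$, and invoke Lemma~\ref{lemma_typicalregulartriple} to obtain $\varepsilon_0 = \varepsilon_{0,\ref{lemma_typicalregulartriple}}(\beta, \delta, \varepsilon')$ (which becomes the constant of the proposition) together with $C_{\ref{lemma_typicalregulartriple}}(\delta, \varepsilon', \xi)$; the latter is harmless since the proposition's hypothesis $p \gg n^{-1/2}$ eventually exceeds any absolute constant times $n^{-1/2}$. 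The lemma then guarantees that out of the members of $\FAMILY{K_3}{(n_1,n_2,n_3)}{(d_{12},d_{23},d_{31})}{(\varepsilon,p)}$, at most
\[
\beta^{\delta \xi^{2} n^{2} p} \prod_{\{i,j\}} \binom{n_i n_j}{d_{ij} p n_i n_j}
\]
fail to be $\varepsilon'$-typical, whenever $\varepsilon \le \varepsilon_0$, $d_{ij} \ge \delta$, $n_i \ge \xi n$.

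Next, I would fix an ordered triple $(V_1, V_2, V_3)$ of pairwise disjoint subsets of $V(G(n,p))$ with $|V_i| \ge \xi n$, and a density profile $(d_{12}, d_{23}, d_{31})$ with $d_{ij} \ge \delta$ such that $e_{ij} := d_{ij} p n_i n_j$ is an integer (note this forces the profile to lie in a finite grid of size at most $(n^2+1)^3 \le n^6$). Every member of the family above has exactly $e_{ij}$ edges between $V_i$ and $V_j$, so each fixed such graph is the realization of $G(n,p)[V_1 \cup V_2 \cup V_3]$ with probability $\prod_{\{i,j\}} p^{e_{ij}}(1-p)^{n_i n_j - e_{ij}}$. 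The probability that $G(n,p)[V_1 \cup V_2 \cup V_3]$ lies in the family but is \emph{not} $\varepsilon'$-typical is therefore at most
\[
\beta^{\delta \xi^{2} n^{2} p} \prod_{\{i,j\}} \binom{n_i n_j}{e_{ij}} p^{e_{ij}} (1-p)^{n_i n_j - e_{ij}} \le \beta^{\delta \xi^{2} n^{2} p},
\]
since each factor in the product is a binomial point probability, hence at most $1$.

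To finish, I would apply the union bound over choices of the ordered triple and of the density profile. The number of ordered triples of pairwise disjoint subsets of an $n$-element set is at most $4^n$ (each vertex chooses one of $V_1, V_2, V_3$, or none), and the number of admissible profiles is at most $n^6$. Hence the total failure probability is at most $4^n \cdot n^6 \cdot \beta^{\delta \xi^{2} n^{2} p}$. The hypothesis $p \gg n^{-1/2}$ gives $\delta \xi^{2} n^{2} p \gg n$, so this quantity is $o(1)$, which concludes the argument.

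I do not anticipate any substantive obstacle: the whole proof is a packaging of Lemma~\ref{lemma_typicalregulartriple}, and the only quantitative point to verify is that the $\beta^{\delta \xi^{2} n^{2} p}$ savings dominate the $4^n$ union-bound factor. This is precisely where the assumption $p \gg n^{-1/2}$ enters, exactly as in the comparable computations elsewhere in the paper.
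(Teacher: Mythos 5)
Your proof is correct and is precisely the argument the paper has in mind: the paper omits the proof of Proposition~\ref{prop_randomgraphtypical} as ``fairly standard,'' noting only that it ``can be proved using Lemma~\ref{lemma_typicalregulartriple},'' and your first-moment union bound over ordered triples and integer density profiles, driven by the observation that the $\beta^{\delta\xi^2 n^2 p}$ saving swamps the $4^n\cdot n^{O(1)}$ enumeration because $p\gg n^{-1/2}$ forces $n^2p\gg n$, is exactly that derivation. (Two cosmetic points: the tripartite subgraph of $G(n,p)$ spanned by the three classes is what is meant, not the full induced subgraph $G(n,p)[V_1\cup V_2\cup V_3]$, and $(n^2+1)^3$ is $O(n^6)$ rather than $\le n^6$; neither affects the argument.)
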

\section{Obtaining balanced super-regular triples}
\label{section_cleanup1}

In Section~\ref{subsection_outline2}, we mentioned that the process of absorbing exceptional vertices into a triangle packing may cause some regular triples in our graph to become slightly unbalanced. The following lemma describes a greedy procedure that finds a small triangle packing which restores the balance in each of these triples.

\begin{LEMMA} \label{lemma_balancingpartition}
Let $p \gg n^{-1/2}$. For all positive reals $\delta$,
$\varepsilon'$, and $\gamma$, there exists
an $\varepsilon_0(\delta, \varepsilon',\gamma)$ such that
if $\varepsilon < \varepsilon_0$, then the following holds asymptotically almost surely. Let $G$
be a subgraph of $G(n,p)$ and let $V_1, \ldots, V_{3k}$ be disjoint
subsets of $V(G)$ satisfying $|V_i| \in [(1 - \varepsilon)m,
(1+\varepsilon)m]$ for some $m = \Omega(n)$. Let $R$ be a graph on the
vertex set $[3k]$ of minimum degree at least $(2 + \gamma)k$ such
that $\{3t-2, 3t-1, 3t\}$ forms a triangle for all $t \in [k]$ and
assume that $(V_i)_{i=1}^{3k}$ is $(\delta, \varepsilon,p)$-regular
over $R$.

Then there exist subsets $B$ and $S$ of $V(G)$ such that the
following holds.
\begin{enumerate}[(i)]
    \setlength{\itemsep}{1pt} \setlength{\parskip}{0pt}
    \setlength{\parsep}{0pt}
\item $|B| \le 4k$,
\item $G[S]$ contains a perfect triangle packing,
\item $|V_i \cap (B \cup S)| \le \varepsilon' m$ for all $i \in
[3k]$, and
\item $V_i \setminus (B \cup S)$ have equal sizes for all $i$.
\end{enumerate}
\end{LEMMA}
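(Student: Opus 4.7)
The plan is to reduce the statement to a combinatorial decomposition problem on $R$ and then to realize the decomposition as a vertex-disjoint family of triangles in $G$ via regularity. Note first that since $|V_i|=\Theta(m)$ with $m=\Omega(n)$, the integer $k$ is bounded by an absolute constant, so $4k$ is itself a constant. We will choose a common target size $m' \in [\max_i |V_i| - \varepsilon' m,\, \min_i |V_i|]$ -- a non-empty interval provided $\varepsilon<\varepsilon'/2$ -- and set $a_i:=|V_i|-m'\in[0,\varepsilon' m]$. The problem then reduces to finding a multiset $\mathcal{T}$ of triangles of $R$, realized as vertex-disjoint triangles in $G$ with one vertex from each corresponding part, such that the multiplicity $c_i$ of each index $i$ in $\mathcal{T}$ satisfies $c_i\le a_i$ and $\sum_i(a_i-c_i)\le 4k$. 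Setting $S$ to the union of the realized triangles and $B$ to $a_i-c_i$ arbitrarily chosen leftover vertices from each $V_i$ will yield properties (iii)--(iv), since $|V_i\cap(B\cup S)|=a_i\le\varepsilon' m$ and $|V_i\setminus(B\cup S)|=m'$.

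We will build $\mathcal{T}$ in two phases. In Phase~1, for each $t\in[k]$ we include the vertical triangle $\{3t-2,3t-1,3t\}$ with multiplicity $v_t:=\min_{i\in T_t}a_i$; this zeros the smallest $a$-value in each vertical triple, so the residual vector $r$ has support $U\subset[3k]$ meeting each vertical triple in at most two indices, giving $|U|\le 2k$. In Phase~2, we iteratively pick cross triangles $\{i,j,\ell\}$ of $R$ with $r_i,r_j,r_\ell>0$ and include each with multiplicity $1$, decrementing $r$. The min-degree condition $\delta(R)\ge(2+\gamma)k$ provides an abundance of cross triangles: every pair of vertices of $R$ has at least $(1+2\gamma)k$ common neighbours, and inside any such common neighbourhood each vertex has internal degree at least $2\gamma k-1$, producing adjacent pairs $p,q$ completing triangles through any prescribed vertex $\ell$. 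The key structural consequence is that the integer lattice in $\mathbb{Z}^{3k}$ generated by the triangle indicators of $R$ equals $\{v\in\mathbb{Z}^{3k}:\sum_i v_i\equiv 0\pmod 3\}$, because $\chi_{\{\ell,p,q\}}-\chi_{\{\ell',p,q\}}=e_\ell-e_{\ell'}$ for any $\ell,\ell'$ and any common-neighbour pair $p,q$. Using this -- combined with vertical triangles to absorb any negative coefficients into non-negative multiplicities -- we argue that ``swap'' moves can redistribute residuals so that the total leftover $\sum_i(a_i-c_i)$ ends up at most $4k$.

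Finally, we realize $\mathcal{T}$ as a vertex-disjoint family in $G$ one triangle at a time. For each $\{i,j,\ell\}\in\mathcal{T}$, the triple $(V_i,V_j,V_\ell)$ is $(\varepsilon,p)$-regular with all pairwise densities at least $\delta p$, and the total number of vertices used from each part is at most $\varepsilon' m$; after deleting previously used vertices the subtriple remains $(2\varepsilon,p)$-regular by Proposition~\ref{prop_inheritregularity}, and by Proposition~\ref{prop_findtriangle} (invoking a.a.s.~typicality via Proposition~\ref{prop_randomgraphtypical}) it still contains many triangles, so we can greedily pick the next vertex-triangle. The main technical obstacle will be the combinatorial step of Phase~2: bounding the integrality gap of the triangle-decomposition by $4k$, which we expect to follow from an iterative swap argument exploiting the richness of the triangle-indicator lattice of $R$ guaranteed by the min-degree assumption.
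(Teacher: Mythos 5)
Your proposal correctly identifies the reduction (choose a common target size $m'$, write $a_i = |V_i| - m'$, and seek a multiset $\mathcal{T}$ of triangles of $R$ realized vertex-disjointly in $G$), and the realization step via Propositions~\ref{prop_inheritregularity}, \ref{prop_randomgraphtypical}, and \ref{prop_findtriangle} is sound. But the combinatorial heart of the argument --- finding $\mathcal{T}$ with $c_i \le a_i$ for all $i$ and $\sum_i (a_i - c_i) \le 4k$ --- is not actually established. You write that the bound is expected ``to follow from an iterative swap argument exploiting the richness of the triangle-indicator lattice,'' and this is precisely where the argument has a genuine gap. The naive Phase~2 greedy, which only selects cross triangles with all three vertices in the current support of the residual vector $r$, can stall badly: after Phase~1 the support can be as small as two indices $\{i,j\}$ with $r_i, r_j = \Theta(m)$, in which case there are no triangles to select and the leftover is $\Theta(m)$, not $O(k)$. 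Fixing this requires either allowing triangles that overshoot (covering indices with $r$-value $0$), which threatens the constraint $c_i \le a_i$, or a careful re-balancing of earlier Phase~1 choices --- neither of which is worked out. The lattice identity $\mathbb{Z}\langle \chi_T : T \text{ a triangle of } R\rangle = \{v : \sum_i v_i \equiv 0 \pmod 3\}$ is plausible but is a statement about signed combinations, and converting it into a bound of $4k$ on the integrality gap of the nonnegative (multiset) version is a nontrivial integer-programming claim in its own right.

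The paper avoids this issue altogether with a local greedy scheme that never raises an integrality question. It assigns to each index $i \in [3k]$ a ``rich'' vertical triple $C_{s(i)}$ (one fully contained in $N_R(i)$; the minimum-degree condition gives at least $\gamma k$ choices for each $i$), distributed so that each $s \in [k]$ is assigned at most $3/\gamma$ times. To decrement $|V_i|$ by $xm$ (after moving $\le 2$ vertices to $B$ to make $xm$ divisible by $3$), it removes $xm/3$ triangles across $(V_i, V_{3s-j}, V_{3s-\ell})$ for each of the three pairs $\{j,\ell\} \subset \{0,1,2\}$, so that $V_i$ loses $xm$ vertices while \emph{each} of the three parts of $C_s$ loses exactly $2xm/3$ --- keeping $C_s$ balanced. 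Processing the vertical triples in order, each $C_t$ is balanced once processed and stays balanced thereafter, and the $3/\gamma$ budget ensures no part shrinks by more than a controlled fraction of $m$. The bound $|B| \le 4k$ comes solely from the $\le 2$ divisibility corrections per adjustment, two adjustments per triple. This argument is constructive and sidesteps the integrality gap entirely; if you want to pursue the lattice/swap approach, the ``same decrement on all three parts of the rich triple'' device is exactly what you would need to smuggle in to avoid the stalling configurations described above.
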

\begin{proof}
Let $\varepsilon_1 = \varepsilon_{\ref{prop_findtriangle}}(\frac{1}{2},\delta)$
and $\varepsilon_0 = \min \{ \frac{\varepsilon'}{4(3/\gamma + 1)},
\varepsilon_{\ref{prop_randomgraphtypical}}(\varepsilon_1,\frac{\delta}{2}),
\frac{\delta}{2}, \varepsilon_1 \}$. Assume that $\varepsilon \in (0, \varepsilon_0)$ is given.

Let $C_t = \{3t-2, 3t-1, 3t\}$ for $t \in [k]$ be triangles of the
graph $R$. For each vertex $i \in [3k]$, call an index $t \in [k]$
{\em $i$-rich} or {\em rich with respect to $i$} if $i$ is adjacent to all three vertices of $C_t$, and
assume that there are $g_i$ $i$-rich indices. Then by the
minimum degree condition on $R$, we have
\[ 3g_i + 2(k - g_i) \ge (2 + \gamma)k, \]
which is equivalent to $g_i \ge \gamma k$. Thus for each vertex $i
\in [3k]$ of $R$, we can assign an $i$-rich index $t \in [k]$ to it so that
every index in $[k]$ is chosen by at most $(3k)/(\gamma k) =
3/\gamma$ vertices.

Consider the following process that adjusts the parts one by one.
Throughout the process, we will maintain sets $B \subset V(G)$ and
$Z_i \subset V_i$ for each $i \in [3k]$; they are empty at the beginning. Call a
triangle $C_t$ balanced if the sets $V_{3t-2} \setminus Z_{3t-2}$,
$V_{3t-1} \setminus Z_{3t-1}$, and $V_{3t} \setminus Z_{3t}$ have equal
cardinalities. Assume that the triangles $C_1, \ldots, C_{t-1}$ are
already balanced and we are trying to balance the triangle $C_t$.
Without loss of generality, we may assume that $|V_{3t-i} \setminus
Z_{3t-i}| - |V_{3t} \setminus Z_{3t}| = x_i m$ and $0 \le x_i \le
2\varepsilon$ for $i \in \{1,2\}$. Thus we have to remove $x_i m$ vertices
from $V_{3t-i}$ for $i \in \{1,2\}$ in order to make $C_t$ balanced. By
moving at most $2$ arbitrary vertices from each set $V_{3t-1}$ and
$V_{3t-2}$ to $B$ and also to $Z_{3t-1}$ and $Z_{3t-2}$, respectively,
we may assume that both $x_1 m$ and $x_2 m$ are divisible by 3. First
consider the set $V_{3t-1}$ and let $s$ be the rich index
with respect to $3t-1$ which we have chosen above. As we will later establish, for every
$i \in [3k]$, $|V_{i} \setminus Z_i| \ge m/2$ throughout the
process. Therefore by Proposition~\ref{prop_inheritregularity}, the
triple $(V_{3t-1} \setminus Z_{3t-1}, V_{3s-j} \setminus Z_{3s-j},
V_{3s-k} \setminus Z_{3s-k})$ inherits the regularity of $(V_{3t-1},
V_{3s-j}, V_{3s-k})$ and is always $(2\varepsilon, p)$-regular of
density at least $\delta/2$ for every pair
$\{j, k\} \subset \{0,1,2\}$. By Proposition~\ref{prop_randomgraphtypical}, a.a.s.~it
must also be $\varepsilon_1$-typical. Thus by
Proposition~\ref{prop_findtriangle} we can find $x_1 n /3$ triangles across
this triple. Do this for each pair $\{j, k\}$
and update the sets $Z_{3t-1}$, $Z_{3s-2}$, $Z_{3s-1}$, and $Z_{3s}$ by
placing all the vertices of these triangles into corresponding parts.
Note that even though the sizes of the sets in $C_s$ have decreased,
the number by which they decreased is the same for all three of them
and thus after performing the same procedure for $V_{3t-2}$, the triangles
$C_1, \ldots, C_t$ will be balanced.

Note that in the end, $|B| \le 4k $. Moreover, throughout the
process, by the restriction that every index is the chosen rich index for at
most $3/\gamma$ other indices, we always have, $|Z_i| \le
2\varepsilon m \cdot (3/\gamma + 1) \le \min\{\varepsilon' m, m/2\}$
as claimed. Define $S = \big( \cup_{i=1}^{3k} Z_i \big) \setminus B$
and we have the sets $B$ and $S$ as claimed.
\end{proof}

Below is the main theorem of this section. It says that we can partition our graph into balanced super-regular triples, a collection of vertex-disjoint triangles, and a set of at most $O(p^{-2})$ exceptional vertices. We would like to remark that the upper bound imposed on the sizes of the common neighbourhoods in (v) will come in handy in the proof of Theorem~\ref{thm_decomposition}, where we show that the triples $(W_{3t-2}, W_{3t-1}, W_{3t})$ are close to being strong-super-regular, see Proposition~\ref{prop_manygoodvertices}.

\begin{THM} \label{thm_cleanstage1}
For an arbitrary $\gamma$, there exist
$\delta(\gamma)$ and $\varepsilon_0(\gamma)$
such that for all $\varepsilon \in (0,\varepsilon_0)$, there exist
constants $C(\varepsilon), D(\varepsilon)$, and $\xi(\varepsilon)$
satisfying the following. If $p \ge C(\log n / n)^{1/2}$,
then a.a.s.~for every spanning subgraph $G' \subset
G(n,p)$ with $\delta(G') \ge (2/3 + \gamma)np$, there exist a
further subgraph $G'' \subset G'$ and a partition of $V(G)$ into
sets $B$, $S$, and $(W_i)_{i=1}^{3k}$, where $k \leq D$, such that
\begin{enumerate}[(i)]
    \setlength{\itemsep}{1pt}
    \setlength{\parskip}{0pt}
    \setlength{\parsep}{0pt}
\item $|B| \le Dp^{-2}$.
\item $G'[S]$ contains a perfect triangle packing.
\item $(W_{3t-2}, W_{3t-1}, W_{3t})$ is a $(\delta, \varepsilon,p)$-super-regular triple in $G''$ for all $t \in [k]$.
\item $|W_{3t-2}| = |W_{3t-1}| = |W_{3t}| \ge \xi n$ for all $t \in [k]$.
\item In the graph $G''$, for all $t \in [k]$, the endpoints of every edge in $(W_{3t-2}, W_{3t-1})$ have at most
$4|W_{3t}|p^2$ common neighbours in $W_{3t}$ and a similar statement holds for other choices of indices.
\end{enumerate}
\end{THM}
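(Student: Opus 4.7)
The plan is to carry out the four-step strategy announced in Section~\ref{subsection_outline2}: apply the sparse regularity lemma, use Corr{\'a}di--Hajnal on the reduced graph to group the parts into triples, absorb the exceptional set and rebalance, and finally enforce (v) by deleting a few edges. I fix a chain of constants
\[
0 < \varepsilon_* \ll \varepsilon_1 \ll \varepsilon_2 \ll \varepsilon \ll \delta \ll \gamma,
\]
and let $C$ be large in terms of all of them. First I apply Theorem~\ref{thm_regularity} to $G'$ with parameter $\varepsilon_*$---legitimate because Proposition~\ref{prop_randomgraphproperties1}(iii) guarantees that $G(n,p)$ is a.a.s.\ $(\eta,2,p)$-upper-uniform for every fixed $\eta$---to get a partition $(V_i)_{i=0}^{k_1}$. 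Proposition~\ref{prop_reducedmindegree} ensures that the $(\delta,\varepsilon_*,p)$-reduced graph $R$ has minimum degree at least $(2/3+\gamma/2)k_1$, and so Corr{\'a}di--Hajnal (Theorem~\ref{thm_CoHa}) gives $R$ a $K_3$-factor after at most two extra parts are moved into $V_0$. Relabel so that the factor triangles are $C_t = \{3t-2,3t-1,3t\}$ for $t \in [k]$, where $k = \lfloor k_1/3 \rfloor$; Proposition~\ref{prop_randomgraphtypical} implies that each triple $(V_{3t-2},V_{3t-1},V_{3t})$ is a.a.s.\ $\varepsilon_1$-typical, so I move any non-typical vertex into $V_0$, keeping $|V_0|$ of order $\varepsilon_1 n$.

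Second, absorb $V_0$ into a triangle packing. Initialize $T=\emptyset$ and iterate: while an uncovered vertex $v \in V_0\setminus T$ exists and $|T| \leq \varepsilon_{\ref{lemma_leftovertriangles}} n$, apply Lemma~\ref{lemma_leftovertriangles} to find a triangle of $G'$ through $v$ avoiding $T$, and add its other two vertices to $T$. Since $|V_0| \ll \varepsilon_{\ref{lemma_leftovertriangles}} n$, the process terminates with at most $D_{\ref{lemma_leftovertriangles}} p^{-2}$ uncovered vertices, which form the first installment $B_0$ of $B$. The total drain on the $V_i$'s is only $O(\varepsilon_1 n)$; choosing $\varepsilon_1 \ll 1/k$ keeps each part's size within a $(1\pm \varepsilon_1^{1/2})$-factor of its original size, which allows me to apply Lemma~\ref{lemma_balancingpartition} to the resulting partition to produce a further $B_1$ with $|B_1| \leq 4k$ and a further triangle packing $S_1$ that balances every triple. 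Denote the equalized parts by $W_i'$. Propositions~\ref{prop_inheritregularity} and~\ref{prop_typicalstability}, together with Proposition~\ref{prop_nontypicalvertices}(ii) to discard the $O(p^{-1})$ vertices per part whose degree into the removed set is atypical, then make each triple $(W_{3t-2}', W_{3t-1}', W_{3t}')$ into a $(\delta,\varepsilon_2,p)$-super-regular triple at the further cost of $O(kp^{-1}) = O(p^{-2})$ vertices added to $B$.

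Third, enforce (v) by trimming edges. For each $t$ and each index $i \in \{3t-2,3t-1,3t\}$, apply Proposition~\ref{prop_randomgraphproperties2} with $X = W_i'$: in $G(n,p)$ there are at most $\varepsilon_3 n^2 p$ edges in the opposite pair $(W_j', W_k')$ whose endpoints have more than $(1+\rho)|W_i'|p^2 < 4|W_i'|p^2$ common neighbours in $W_i'$. Define $G''$ by removing from $G'$ all such bad edges (over every $t$ and every choice of $i$); condition (v) then holds in $G''$ by construction, and Proposition~\ref{prop_inheritregularity_edgever} preserves the $(\varepsilon,p)$-regularity of each pair once $\varepsilon_3$ is small enough. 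Setting $S = S_0 \cup S_1$, $B$ as the union of all exceptional sets produced above, and $W_i = V_i \setminus (S \cup B)$, the partition and graph $G''$ then satisfy (i)--(v).

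The main obstacle I expect is preserving super-regularity through the edge deletions of step three: a vertex $v \in W_j'$ incident to too many bad edges in $(W_j', W_k')$ loses too many neighbours and may cease to be $\varepsilon$-typical in $G''$. The plan is to choose $\varepsilon_3$ much smaller than $\varepsilon \delta \xi^2 / k^2$, so that the density shift produced by the deletions is absorbed by the slack in typicality and a vertex with no bad incident edges remains typical, and then to move into $B$ the vertices with anomalously many incident bad edges. Keeping the number of such vertices within the allotted $O(p^{-2})$ budget appears to require more than a plain Markov bound and will likely rely on the refined counting behind Proposition~\ref{prop_randomgraphproperties2}, which already shows that only $O(p^{-1})$ vertices have atypical degree into any given large set. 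A subsidiary delicate point, handled by choosing $\varepsilon_1$ small enough in terms of $1/k$, is ensuring that the absorbing step of stage two does not drain any single $V_i$ so heavily that Lemma~\ref{lemma_balancingpartition}'s size hypothesis is violated.
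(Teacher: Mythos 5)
The proposal follows the same four-stage skeleton as the paper (regularity lemma, Corr\'adi--Hajnal on the reduced graph, absorption and rebalancing, edge-trimming for property (v)), but it places the edge-deletion step \emph{last}, and this reordering creates exactly the gap you yourself flag at the end. The paper deletes the bad edges \emph{immediately after} the triples are identified, while they still have their full size $|V_i| = m$: it applies Proposition~\ref{prop_randomgraphproperties2} with $X = V_{3t}$, removes at most $(\varepsilon_3/2)pm^2$ edges from $(V_{3t-1},V_{3t-2})$ to form $G''$, invokes Proposition~\ref{prop_inheritregularity_edgever} to keep the triple $(3\varepsilon_3^{1/3},p)$-regular, and then invokes Proposition~\ref{prop_randomgraphtypical} to recover typicality in $G''$. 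Because the remainder of the cleanup (absorbing $V_0$, balancing, super-regularizing) is run \emph{on $G''$}, the non-typical vertices produced by the deletion are absorbed in the same single pass as the originally non-typical ones, and at the end condition (v) comes for free from $2|V_i|p^2 \le 4|W_i'|p^2$ once $|W_i'| \ge |V_i|/2$.

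Your ordering cannot be repaired with the tools cited. Proposition~\ref{prop_randomgraphproperties2} controls only the \emph{total} number of bad edges relative to a fixed large set $X$, not their per-vertex distribution, so after deleting them from a finished super-regular triple $(W_1',W_2',W_3')$, a vertex may lose a constant fraction of its neighbourhood and cease to be typical. The number of such vertices is bounded by Markov's inequality at roughly $\varepsilon_3 n / \varepsilon'' $, which is linear in $n$ --- far outside the $O(p^{-2})$ budget for $B$, and there is no proposition in the paper giving the per-vertex concentration that would tame this. Moving them out instead forces you to re-absorb with Lemma~\ref{lemma_leftovertriangles}, re-balance with Lemma~\ref{lemma_balancingpartition}, and re-trim; but the set of bad edges in your scheme is defined relative to the \emph{current} parts $W_i'$, which change under re-trimming, so the process is circular rather than terminating. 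The fix is precisely the paper's reordering: define the bad edges once and for all against the large sets $V_i$, delete them before any trimming, and let the subsequent typicality machinery (which you already invoke correctly) clean up the damage in one pass. One smaller point worth being explicit about: the triangle packing $S$ must live in $G'$ (item (ii) refers to $G'[S]$, not $G''[S]$), which is compatible with both orderings since Lemma~\ref{lemma_leftovertriangles} produces triangles of $G'$; you do say this, but it is what prevents the edge-deletion from interfering with (ii) regardless of where it is placed.
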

\begin{proof}
Given a $\gamma$, let $\delta =
\delta_{\ref{prop_reducedmindegree}}(\gamma)$ and $\varepsilon_0 =
\varepsilon_{\ref{prop_reducedmindegree}}(\gamma)$. Moreover, for a given
$\varepsilon \in (0,\varepsilon_0)$, let $\varepsilon_1 \le \min \{
\varepsilon/2,
\varepsilon_{\ref{prop_typicalstability}}(\varepsilon, \delta),
\delta/2\}$, $\varepsilon_2 \le \min \{
\varepsilon_{\ref{lemma_leftovertriangles}}^2/36^2,
\varepsilon_1^2/400,
\varepsilon_{\ref{lemma_balancingpartition}}(\delta,
\varepsilon_1/3, \gamma/2) \}$, $\varepsilon_3 \le (1/27)\min\{
\varepsilon_{\ref{prop_randomgraphtypical}}(\varepsilon_2,
\delta)^3, \varepsilon_2\}$.

Let $K = K_{\ref{thm_regularity}}(\varepsilon_3, 2,
1/\varepsilon_3)$, $\eta =
\min \{\eta_{\ref{thm_regularity}}(\varepsilon_3, 2, 1/\varepsilon_3), 1\}$,
$\xi = \varepsilon_3/(4K)$, $C = \max\{C_{\ref{prop_randomgraphproperties1}}(\eta), C_{\ref{lemma_leftovertriangles}} \}$, and $D =
\max\{3D_{\ref{lemma_leftovertriangles}},
18KD_{\ref{prop_nontypicalvertices} (ii)}(1/3,\xi), 24K \}$.

Proposition~\ref{prop_randomgraphproperties1}~(iii) implies that
$G(n,p)$ is a.a.s.~$(\eta, 2, p)$-upper-uniform. Thus we can apply
the regularity lemma, Theorem \ref{thm_regularity}, to obtain an
$(\varepsilon_3,p)$-regular partition $V_0, V_1, \ldots, V_{3k}$ of
the graph $G'$, where each part forms a regular pair with at least
$(3-3\varepsilon_3)k$ other parts. Let $m = |V_i|$, note that $m \ge n/(2K)$, and let $R$ be the
reduced graph with parameter $\delta$. Since $G'$
has minimum degree at least $(2/3 + \gamma)np$, by Proposition~\ref{prop_reducedmindegree},
a.a.s.~the reduced graph has minimum degree at
least $(2 + \gamma/2)k$. Thus by Theorem~\ref{thm_CoHa}, we may
assume that $(V_{3t-2}, V_{3t-1}, V_{3t})$ forms an
$(\varepsilon_3,p)$-regular triple of density at least $\delta$ for
all $t \in [k]$. By Proposition~\ref{prop_randomgraphproperties2},
a.a.s.~there are at most $(\varepsilon_3/2) pm^2$ edges in $(V_{3t-1},
V_{3t-2})$ whose endpoints have more than $2|V_{3t}|p^2$ common
neighbours in $V_{3t}$. Similar estimate holds for the edges in
$(V_{3t-2}, V_{3t})$ and $(V_{3t-1}, V_{3t})$. Delete all such edges
for all $t \in [k]$ to obtain the subgraph $G''$. Then in the graph
$G''$, each triple $(V_{3t-2}, V_{3t-1}, V_{3t})$ is
$(3\varepsilon_3^{1/3}, p)$-regular by Proposition~\ref{prop_inheritregularity_edgever}.

By Proposition~\ref{prop_randomgraphtypical}, we may assume that every
$(3\varepsilon_3^{1/3},p)$-regular triple of density at least
$\delta$ is $\varepsilon_2$-typical. Thus for each index $i$, if
we let $X_i \subset V_i$ be the collection of non
$\varepsilon_2$-typical vertices, then $|X_i| \le \varepsilon_2
|V_i|$. Furthermore, for each $t \in [k]$, add to $X_{3t}$ the collection of those vertices
$v \in V_{3t}$ such that $\deg(v, V_{3t-j}) \neq (1 \pm
\varepsilon_2) d_{3t,3t-j}p|V_{3t-j}|$ for some $j \in \{1,2\}$ and define
$X_{3t-1}$ and $X_{3t-2}$ accordingly (there are at most $4\varepsilon_2 n$
such vertices by regularity). By adding arbitrary vertices to $X_i$
if necessary, we may assume that $|X_i| = 5\varepsilon_2 |V_i|$.
Move all the vertices in $X_i$ from $V_i$ to $V_0$ and denote the
resulting partition by $(V_i')_{i=0}^{3k}$. We then have $|V_0'| \le
|V_0| + \sum_{i}|X_i| \le 6\varepsilon_2 n$.

Consider the following process of finding triangles that absorbs the
vertices in $V_0'$. Let $T$ be the empty set; we will update it throughout
the process. Apply Lemma \ref{lemma_leftovertriangles} to find a
triangle which hits $V_0'$ but not $T$ and move all the vertices of
this triangle into $T$. If $|T \cap V_i'| \ge \sqrt{\varepsilon_2}
|V_i'|- 3$ for some index $i \in [3k]$, then move all the vertices of
$V_i'$ into $T$. This way, we will have
\[
|T| \le 3|V_0'|\cdot(1/\sqrt{\varepsilon_2}) + 3|V_0'| \le 36\sqrt{\varepsilon_2}n \le \varepsilon_{\ref{lemma_leftovertriangles}}n
\]
throughout the process. Terminate the process when we cannot find
such triangles anymore. Then, a.a.s.~we must have $|V_0'| \le (D/3)p^{-2}$. Let
$B_0$ be the collection of all the remaining vertices of $V_0'$, and
$S_0$ be the set of vertices in the copies of the triangles that we found.
Let $V_i'' = V_i' \setminus S_0 = V_i \setminus (B_0 \cup S_0)$ and note that
for all $i$, since $|V_i \cap (B_0 \cup S_0)| \le |X_i| + \sqrt{\varepsilon_2}|V_i| \le
(\varepsilon_1/6)|V_i|$, then $|V_i''| \ge (1 - \varepsilon_1/6)|V_i|$.

By Proposition \ref{prop_inheritregularity}, $(V_{3t-2}'',
V_{3t-1}'', V_{3t}'')$ forms a $(2\varepsilon_3,p)$-regular triple
of density at least $\delta - \varepsilon_3 \ge \delta/2$ for all $t
\in [k]$. Apply Lemma \ref{lemma_balancingpartition} to
$(V_i'')_{i=1}^{3k}$ to obtain sets $B_1$ and $S_1$. Observe that
$|B_0 \cup B_1| \le (D/3)p^{-2} + 4K \le (2D/3)p^{-2}$ and $G[S_0 \cup S_1]$ contains a perfect triangle
packing. Also, most crucially, if we let $BS = B_0 \cup S_0 \cup B_1 \cup
S_1$ and $W_i = V_i \setminus BS = V_i'' \setminus (B_1 \cup
S_1)$, then all $W_i$ have equal sizes and moreover, $|W_i| \geq (1 -
\varepsilon_1/6 - \varepsilon_1/3)m \ge (1 - \varepsilon_1/2)m$.

We will remove some vertices from each set $W_i$ to make the triples
$(W_{3t-2}, W_{3t-1}, W_{3t})$ super-regular for all $t \in [k]$.
Since $\xi n \le |X_i| \le |BS \cap V_i| \le (\varepsilon_1/2) m$
for all $i$, by Proposition \ref{prop_nontypicalvertices}~(ii),
there are at most $(D/(18K))p^{-1}$ vertices which have more than
$(2\varepsilon_1/3) p m$ neighbours in $BS \cap V_i$ for each fixed
$i \in [3k]$. Let $Y_1$ be the collection of such vertices for the
set $BS \cap V_2$ and $BS \cap V_3$ which lie in $V_1$ and similarly
define $Y_2, Y_3$. By placing arbitrary vertices into $Y_1, Y_2$, or
$Y_3$ as necessary, we may assume that $|Y_1| = |Y_2| = |Y_3| \le
(D/(9k))p^{-1}$. Consider the set $W_i' = W_i \setminus Y_i$ for
$i \in \{1,2,3\}$. Then since $|Y_i| + |BS \cap V_i| \le (D/(9K))p^{-1} +
(\varepsilon_1/2) m \le \varepsilon_1 m$, in total we removed at
most $\varepsilon_1 m$ vertices from each part of $(V_1,V_2,V_3)$ to
obtain $(W_1',W_2',W_3')$. By the definition of the sets $X_i$ at
the beginning, all the vertices in $W_1'$ were
$\varepsilon_1$-typical in the triple $(V_1, V_2, V_3)$, and by the
choice of $Y_1$, they have at most $(2\varepsilon_1/3) p m + |Y_2|
\le \varepsilon_1 pm$ neighbours in the deleted portion in $V_2$
(similar for $V_3$). Thus by
Proposition~\ref{prop_typicalstability}, all the vertices in $W_1'$
are $\varepsilon$-typical in the triple $(W_1',W_2',W_3')$. Also,
since all the vertices of $V_1$ not in $X_1$ had $(1 \pm
\varepsilon_1)d_{12}p|V_2|$ neighbours in $V_2$, they will still have
$(1 \pm 2\varepsilon_1)d_{12}p|V_2|$ neighbours in $W_2'$ and similar
for other choices of indices. Moreover, the triple $(W_1', W_2',
W_3')$ inherits the regularity of $(V_1, V_2, V_3)$ and is
$(2\varepsilon_3, p)$-regular of density at least $\delta -
\varepsilon_3 > \delta/2$, see Proposition~\ref{prop_inheritregularity}.
Thus by the fact $2\varepsilon_1 <
\varepsilon$ and $2\varepsilon_3 < \varepsilon$, $(W_1',W_2',W_3')$
is $(\delta/2, \varepsilon,p)$-super-regular. Repeat the above process
for all other triples. Let $B$ be the union of $B_0, B_1$, and $Y_i$ for
all $i \in [3k]$ as above so that $|B| \le (2D/3)p^{-2} +
(D/3)p^{-1} = Dp^{-1}$ and $S = S_0 \cup S_1$. We also have the
bound $|W_i'| \ge |V_i|/2 \ge \xi n$ for all $i$.  Moreover, (v) will hold
since in $G''$ all the edges between $W_2'$ and $W_3'$
have at most $2|V_i|p^2$ common neighbours in $V_1$,
and therefore at most $4|W_1'|p^2$ in $W_1'$ (similar for other indices).
\end{proof}

\section{Obtaining balanced strong-super-regular triples}
\label{section_cleanup2}

In the previous section, we managed to decompose the graph into balanced super-regular triples, a triangle packing, and a small set of exceptional vertices. In this section, we will show how by slightly enlarging the triangle packing and the exceptional set, we can make these triples strong-super-regular. 

Our main tool, which will also be used in the next section, is the following lemma, which constructs small quasi-random matchings in super-regular triples. 
For the application in this section, in Theorem~\ref{thm_decomposition} below, 
$V_i'$s will be the sets of non-good vertices 
in each part of a regular partition of the host graph. We want to find vertex disjoint triangles that cover these sets of non-good vertices.
As an intermediate step, we construct random matchings $M_{ij}'$ which
later can be coupled with the non-good vertices in order to construct vertex-disjoint triangles.
See the discussion in Section~\ref{subsection_outline34} for more detailed description.
We would like to remark that even though the stronger assumption (A1) implies the weaker assumption (A2), we state both of them, as (A1) is much simpler and in one of the two applications of Lemma~\ref{lemma_randommatching}, we can verify that this stronger condition is satisfied.
Also note that the statement of this lemma holds not only for strong-super-regular triples 
coming from subgraphs of random graphs, but also for general strong-super-regular triples.

\begin{LEMMA}
  \label{lemma_randommatching}
  For all positive $\delta$ and $\eta$ with $\eta < 1/140$, there exist $\varepsilon(\delta)$ and $C(\delta, \eta)$ such that the following holds. Let $(V_1, V_2, V_3)$ be a $(\delta, \varepsilon, p)$-super-regular triple with $m = |V_1| = |V_2| = |V_3|$ and $p \geq C(\log m/m)^{1/2}$. For each $i$ and $j$, let $d_{ij}p$ be the density of $(V_i, V_j)$, let $q_{ij} = \eta/(d_{ij}pm)$, and let $E_{ij}$ be a subgraph of $(V_i, V_j)$ with $|E_{ij}| \leq \eta d_{ij}pm^2$. Form a set $M'_{ij}$ by selecting every edge in $(V_i, V_j) \setminus E_{ij}$ independently with probability $q_{ij}$ and let $M_{ij} \subset M'_{ij}$ be the set of all selected edges in $(V_i, V_j)$ that are not incident to any other edge in $M'_{12} \cup M'_{13} \cup M'_{23}$. Moreover, for each $i$, let $Q_i$ be the set of all vertices in $V_i$ that are covered by some edge in $M_{12} \cup M_{13} \cup M_{23}$. Assume that for each $i$, $j$, and $k$, there is a set $V'_i$ such that
  \begin{enumerate}
  \item[(A1)]
    the neighbourhood of every $v \in V'_i$ contains at most $\eta d_{ij}d_{ik}d_{jk}p^3m^2$ edges of $E_{jk}$ or
  \item[(A2)]
    for every $v \in V'_i$, every subgraph $(N''_j, N''_k)$ of $(N(v) \cap V_j, N(v) \cap V_k)$ such that $\deg(w, N''_k) \leq 2d_{ik}d_{jk}p^2m$ for all $w \in N''_j$ and $\deg(w, N''_j) \leq 2d_{ij}d_{jk}p^2m$ for all $w \in N''_k$ contains at most $\eta d_{ij}d_{ik}d_{jk}p^3m^2$ edges of $E_{jk}$.
  \end{enumerate}
  Then $M_{12} \cup M_{13} \cup M_{23}$ is a matching and with probability tending to $1$ as $m$ tends to infinity, for each $i$, $j$, and $k$,
  \begin{enumerate}
  \item[(M1)]
    $(\eta/2)m \leq |M_{ij}| \leq 2\eta m$,
  \item[(M2)]
    every $v \in V_i$ has at most $3\eta d_{ij}pm$ neighbours in $Q_j$,
  \item[(M3)]
    the neighbourhood of every $v \in V'_i$ contains at least $(\eta/2)\delta^2p^2m$ edges of $M_{jk}$, and
  \item[(M4)]
    the endpoints of each $\eta$-good edge in $(V_j, V_k)$ have at least $(1-4\eta)d_{ij}d_{ik}p^2m$ common neighbours in $V_i \setminus Q_i$.
  \end{enumerate}
\end{LEMMA}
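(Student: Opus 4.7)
My plan is to derive each of (M1)--(M4) by computing its expected value and applying a Chernoff-type concentration inequality, taking a union bound where necessary. The hypothesis $p \ge C(\log m/m)^{1/2}$ guarantees $p^2 m = \Omega(\log m)$, which keeps all the relevant expectations comfortably large. That $M_{12} \cup M_{13} \cup M_{23}$ is a matching is immediate from the construction.

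For (M1), the variable $|M'_{ij}|$ is binomial with mean $q_{ij}(|(V_i, V_j)| - |E_{ij}|) = (1 \pm \eta)\eta m$. For each edge $\{u,v\} \in M'_{ij}$, a union bound over the $(1\pm\varepsilon)(d_{ij} + d_{ik} + d_{jk})pm$ edges incident to $\{u,v\}$, each selected with the matching probability $q$, shows that $\{u,v\}$ has a conflict with probability $O(\eta)$. Hence $\Ex|M_{ij}| = (1 - O(\eta))\eta m$, and concentration for a bounded-difference function of independent Bernoullis (McDiarmid) yields (M1). The argument for (M2) is parallel: a vertex $w \in V_j$ lies in $Q_j$ only when some edge of $M'_{ij} \cup M'_{jk}$ is incident to $w$, an event of probability at most $2(1+\varepsilon)\eta$, so $\Ex\deg(v, Q_j) \le 2(1+\varepsilon)^2 \eta d_{ij}pm$, and Chernoff together with a union bound over $v \in V_i$ completes the step.

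For (M3), I would first apply Proposition~\ref{prop_goodtriangles} to each $v \in V_i'$ to produce subsets $N''_j \subset N(v) \cap V_j$ and $N''_k \subset N(v) \cap V_k$ containing at least $(1-O(\varepsilon))d_{ij}d_{ik}d_{jk}p^3 m^2$ edges, with each vertex of $N''_j$ having at most $(1+O(\varepsilon))d_{ik}d_{jk}p^2 m$ neighbours in $N''_k$ (and symmetrically). This is exactly the degree hypothesis needed to invoke (A2) on the sub-triple $(N''_j, N''_k)$, allowing me to conclude that at most $\eta d_{ij}d_{ik}d_{jk}p^3 m^2$ of its edges lie in $E_{jk}$; under (A1), the same conclusion is immediate. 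The remainder mirrors (M1): the number of $M'_{jk}$-edges inside $(N''_j, N''_k) \setminus E_{jk}$ is binomial with mean $(1-O(\eta))\eta d_{ij}d_{ik}p^2 m$, passing to $M_{jk}$ costs only another factor $1 - O(\eta)$, and Chernoff plus a union bound over $V_i'$ deliver at least $(\eta/2)\delta^2 p^2 m$ such edges with high probability.

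Finally, for (M4), I would fix an $\eta$-good edge $\{u,v\} \in (V_j, V_k)$ and let $T = N(u) \cap N(v) \cap V_i$, so $|T| \ge (1-\eta) d_{ij}d_{ik}p^2 m$ by goodness. Using $|T \cap Q_i| \le |T \cap V(M'_{ij})| + |T \cap V(M'_{ik})|$, I observe that $|T \cap V(M'_{ij})|$ is dominated by the number of selected edges in $(T, V_j)$, a binomial with mean at most $q_{ij} e(T, V_j) \le (1+\varepsilon)\eta |T|$ by typicality of the vertices of $T$. The key point is that no upper bound on $|T|$ is required, since I only ever use $|T \cap Q_i|$ as a proportion of $|T|$. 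A tight Chernoff estimate (valid because $\eta|T| \ge \eta \delta^2 p^2 m \gg \log m$ for $C$ large enough) yields $|T \cap Q_i| \le 2(1 + O(\eta))\eta|T|$ whp, whence $|T \setminus Q_i| \ge (1 - 3\eta)|T| \ge (1-3\eta)(1-\eta) d_{ij}d_{ik}p^2 m \ge (1 - 4\eta) d_{ij}d_{ik}p^2 m$; the union bound over at most $d_{jk}pm^2$ good edges is cheap. The hardest part of the write-up will be tracking the several $O(\eta)$ and $O(\varepsilon)$ slacks so that the final constants fit within the stated bounds; no step is conceptually deep.
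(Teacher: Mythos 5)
Your proposal is correct and follows essentially the same plan as the paper: for each of (M1)--(M4), express the quantity as a sum of independent (or effectively independent) indicators, compute the expectation, apply a Chernoff-type bound, and union over vertices or good edges where needed; for (M3), invoke Proposition~\ref{prop_goodtriangles} to produce the degree-bounded sub-pairs $(N''_j, N''_k)$ that serve both to trigger hypothesis (A2) and to control the conflict estimate.

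The only genuine divergence is in (M1): you propose applying McDiarmid's bounded-difference inequality directly to the function $|M_{ij}|$ (a Lipschitz function of the $\Theta(pm^2)$ independent edge-selections, with Lipschitz constant $\le 3$), whereas the paper first applies Chernoff to the binomial $|M'_{ij}|$ and then separately controls the loss $|M'_{ij}| - |M_{ij}|$ by counting ``conflicted'' endpoints, whose indicator events $\{\mathcal{A}_w : w \in V_i\}$ are mutually independent (they depend on disjoint sets of edge-indicators). Both routes are valid; McDiarmid gives a weaker tail (roughly $\exp(-\Omega(\eta^2 m/p))$ versus the paper's $o(m^{-1})$), but (M1) needs no union bound over vertices so this is enough. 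The paper's conflicted-endpoint device is reused wholesale in the proof of (M3) (the events $\mathcal{C}_w$ there are likewise mutually independent once the degree bounds of Proposition~\ref{prop_goodtriangles} are in force), which is what makes your ``passing to $M_{jk}$ costs only another factor $1-O(\eta)$'' go through; in a full write-up you would want to state explicitly that those same degree bounds are what keep the per-vertex conflict probability $O(\eta^2 d_{ij}d_{ik}d_{jk}p^3m)$, and that independence of the $\mathcal{C}_w$ is what lets Chernoff finish.
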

\begin{proof}
 Let $\varepsilon = \min\{1/100, \delta/50\}$. Fix $i$, $j$, and $k$ with $\{i,j,k\} = \{1,2,3\}$. For 
 each vertex $v \in V_i$, let $N_j = N(v) \cap V_j$ and $N_k = N(v) \cap V_k$. By construction, $M_{12} \cup M_{13} \cup M_{23}$ is a matching.

  \begin{CLAIM}
    \label{claim_M1}
    With probability $1 - o(1)$, $(\eta/2)m \leq|M_{ij}| \le 2\eta m$.
  \end{CLAIM}
  \begin{proof}
    By our assumption on $|E_{ij}|$, there are at least $(1-\eta)d_{ij}pm^2$ edges in $(V_i, V_j) \setminus E_{ij}$, so
    $\BBE[|M'_{ij}|] \ge (1 - \eta)d_{ij}pm^2q_{ij} = (1 - \eta)\eta m$, and Chernoff's inequality implies that $|M'_{ij}| \geq (3\eta/4)m$ with probability $1 - o(1)$. In order to estimate $|M_{ij}|$, note that $|M'_{ij}| - |M_{ij}|$ is at most the number of vertices in $V_i \cup V_j$ that are incident to an edge of $M'_{ij}$ and some other edge in $M'_{12} \cup M'_{13} \cup M'_{23}$. Let $\mathcal{A}_w$ denote the event that $w$ is such a ``bad'' vertex. Since $(V_1, V_2, V_3)$ is $(\delta, \varepsilon, p)$-super-regular, $\deg(v, V_j) \leq (1+\varepsilon)d_{ij}pm$ and $\deg(v, V_k) \leq (1+\varepsilon)d_{ik}pm$ for every $v \in V_i$. Hence, if $w \in V_i$, then
    \[
    P(\mathcal{A}_w) \leq \deg(w, V_j)q_{ij} \cdot (\deg(w,V_j)q_{ij} + \deg(w,V_k)q_{ik}) \leq (1+\varepsilon)^22\eta^2
    \]
    and the expected number of such ``bad'' vertices in $V_i$ is at most $(1+\varepsilon)^22\eta^2m$.
    The events $\{\mathcal{A}_w \colon w \in V_i\}$ are mutually independent, so by Chernoff's inequality, with probability at least $1 - o(m^{-1})$, there are at most $3\eta^2m$ ``bad'' vertices in $V_i$ and similarly, there are at most $3\eta^2m$ ``bad'' vertices in $V_j$. Hence, $|M_{ij}| \geq (3/4 - 6\eta)\eta m \geq (\eta/2)m$ with probability $1 - o(1)$. Finally, since the number of edges in $(V_i, V_j) \setminus E_{ij}$ is at most $d_{ij}pm^2$, we have $\BBE[|M'_{ij}|] \le d_{ij}pm^2q_{ij} = \eta m$, and Chernoff's inequality implies that $|M_{ij}| \leq |M'_{ij}| \leq 2\eta m$ with probability $1 - o(1)$.
  \end{proof}

  \begin{CLAIM}
    \label{claim_M2}
    For each fixed vertex $v$, with probability $1 - o(m^{-1})$, we have $\deg(v, Q_j) \leq 3\eta d_{ij}pm$.
  \end{CLAIM}
  \begin{proof}
    Let $Q'_i$ be the set of vertices in $V_i$ that are covered by some edge in $M'_{ij} \cup M'_{ik}$ and note that $Q'_i \supset Q_i$ (similarly define $Q_j'$ and $Q_k'$). For a vertex $w \in V_j$, let $\mathcal{B}_w$ denote the event that $w \in Q'_j$. Since $(V_1, V_2, V_3)$ is $(\delta, \varepsilon, p)$-super-regular, $\deg(w, V_i) \leq (1+\varepsilon)d_{ij}pm$ and $\deg(w, V_k) \leq (1+\varepsilon)d_{jk}pm$. Hence,
    \[
    P(\mathcal{B}_w) \leq \deg(w,V_j)q_{ij} + \deg(w,V_k)q_{ik} \leq 2(1+\varepsilon)\eta.
    \]
    The events $\{\mathcal{B}_w \colon w \in V_j\}$ are mutually independent and $|N_j| \geq (\delta/2)pm$, so by Chernoff's inequality, $|N_j \cap Q'_j| \leq (5\eta/2)|N_j|$ with probability at least $1 - e^{-c\eta\delta pm}$ for some absolute positive constant $c$. It follows that
    \[
    \deg(v, Q_j) \leq \deg(v, Q'_j) = |N_j \cap Q'_j| \leq (5\eta/2)(1+\varepsilon)d_{ij}pm \leq 3\eta d_{ij}pm
    \]
    with probability $1 - o(m^{-1})$.
  \end{proof}

  \begin{CLAIM}
    \label{claim_M3}
    For each fixed $v \in V'_i$, with probability $1 - o(m^{-1})$, the pair $(N_j, N_k)$ contains at least $(\eta / 2)d_{ij}d_{ik}p^2m$ edges of $M_{jk}$.
  \end{CLAIM}
  \begin{proof}
    Without loss of generality, we may assume that $(i,j,k) = (1,2,3)$. Since $v$ is $\varepsilon$-typical, $(1+\varepsilon^2)(1+2\varepsilon/\delta) \leq 2$, and $5\varepsilon + 2\varepsilon/\delta \leq 1/7$, Proposition~\ref{prop_goodtriangles} implies that there are sets $N''_2 \subset N_2$ and $N''_3 \subset N_3$ such that $(N''_2, N''_3)$ contains at least $(6/7)d_{12}d_{13}d_{23}p^3m^2$ edges, no vertex in $N''_2$ has more than $2d_{13}d_{23}p^2m$ neighbours in $N''_3$, and vice versa, no vertex in $N''_3$ has more than $2d_{12}d_{23}p^2m$ neighbours in $N''_2$. Since at most $\eta d_{12}d_{13}d_{23}p^3m^2$
    edges among $(N''_2, N''_3)$ belong to $E_{23}$ by either (A1) or (A2), it follows that
    \[
    \Ex[| M'_{23} \cap (N''_2, N''_3) | ] \geq (5/7)d_{12}d_{13}d_{23}p^3m^2q_{23} = (5/7)d_{12}d_{13}\eta p^2m.
    \]
    Since $|M'_{23} \cap (N''_2, N''_3)|$ is a sum of independent indicator random variables, Chernoff's inequality implies that for some absolute constant $c$,
    \[
    P\left( | M'_{23} \cap (N''_2, N''_3) | \geq (4/7)d_{12}d_{13}\eta p^2m \right) \geq 1 - e^{-c\eta\delta^2p^2m} \geq 1 - 1/m^2,
    \]
    provided that $C$ is sufficiently large.

    In order to estimate $|M_{23} \cap (N''_2, N''_3)|$, note that $|M'_{23} \cap (N''_2, N''_3)| - |M_{23} \cap (N''_2, N''_3)|$ is at most the number of vertices in $N''_2 \cup N''_3$ that are incident to an edge in $M'_{23} \cap (N''_2, N''_3)$ and some other edge in $M'_{12} \cup M'_{13} \cup M'_{23}$. Let $\mathcal{C}_w$ denote the event that $w$ is such a ``bad'' vertex. If $w \in N''_2$, then
    \begin{align*}
      P(\mathcal{C}_w) & \leq \deg(w, N''_3)q_{23} \cdot (\deg(w,V_3)q_{23} + \deg(w,V_1)q_{12}) \\
      & \leq 2d_{13}d_{23}p^2mq_{23} \cdot ((1+\varepsilon)d_{23}pmq_{23} + (1+\varepsilon)d_{12}pmq_{12}) = (1+\varepsilon)4\eta^2d_{13}\eta^2p.
    \end{align*}
    Since $|N''_2| \leq |N_2| \leq (1+\varepsilon)d_{12}pm$, the expected number of such ``bad'' vertices in $N''_2$ is at most $(1+\varepsilon)^24\eta^2d_{12}d_{23}p^2m$. The events $\{\mathcal{C}_w \colon w \in N''_2\}$ are mutually independent, so by Chernoff's inequality, for some absolute constant $c$, with probability at least $1 - e^{-c\delta^2\eta^2p^2m}$, there are at most $5\eta^2d_{12}d_{13}p^2m$ ``bad'' vertices in $N''_2$ and similarly, there are at most $5\eta^2d_{12}d_{13}p^2m$ ``bad'' vertices in $N''_3$. Hence, with probability $1 - o(m^{-1})$,
    \[
    |M_{23} \cap (N''_2, N''_3)| \geq (4/7 - 10\eta)d_{12}d_{13}\eta p^2m \geq (\eta/2)d_{12}d_{13}p^2m,
    \]
    provided that $C$ is sufficiently large.
  \end{proof}

  \begin{CLAIM}
    \label{claim_M4}
    With probability $1 - o(1)$, the endpoints of every $\eta$-good edge in $(V_j, V_k)$ have at least $(1-4\eta)d_{ij}d_{ik}p^2m$ common neighbours in $V_i \setminus Q_i$.
  \end{CLAIM}
  \begin{proof}
    For an arbitrary vertex $v \in V_i$, let $\mathcal{D}_v$ denote the event that $v \in Q'_i$. Clearly,
    \[
    P(\mathcal{D}_v) \leq \deg(v,V_j)q_{ij} + \deg(v,V_k)q_{ik} \leq 2(1+\varepsilon)\eta.
    \]
    Fix some $\eta$-good edge in $(V_j, V_k)$ and let $A \subset V_i$ be the set of common neighbours of its endpoints. Then $\BBE[|A \cap Q'_i|] = \sum_{v \in A} P(\mathcal{D}_v) \le |A| \cdot 2(1+\varepsilon)\eta$.
    Moreover by definition, $|A| \geq (1-\eta)d_{ij}d_{ik}p^2m \geq
    (1/2)\delta^2p^2m$. Since the events $\mathcal{D}_v$ are mutually
    independent, Chernoff's inequality implies that
    \[
    P(|A \cap Q'_i| \geq 3\eta|A|) \leq e^{-c\delta^2\eta p^2m}
    \]
    for some absolute positive constant $c$. Hence, if $C$ is sufficiently large, then with probability at least $1 - 1/m^3$,
    \[
    |A \setminus Q_i| \geq |A \setminus Q'_i| \geq (1-3\eta)|A| \geq (1-3\eta)(1-\eta)d_{ij}d_{ik}p^2m \geq (1-4\eta)d_{ij}d_{ik}p^2m.
    \]
    Since there are at most $m^2$ good edges, the claim is proved.
  \end{proof}

  Finally, note that Claims~\ref{claim_M1}--\ref{claim_M4} imply that (M1)--(M4) are satisfied with probability $1 - o(1)$ (one needs to apply the union bound over all choices of
  vertices in order to deduce (M2) and (M3) from \ref{claim_M2} and \ref{claim_M3}).
\end{proof}


Below is the main theorem of this section. It says that we can partition our graph into balanced strong-super-regular triples, a collection of vertex-disjoint triangles, and a set of at most $O(p^{-2})$ exceptional vertices. In the next section, we will prove that each of those strong-super-regular triples contains a triangle-factor.

\begin{THM}
  \label{thm_decomposition}
  For an arbitrary positive $\gamma$, there exists a positive $\delta$ such that for all $\varepsilon$, there exist constants $C$, $D$, and $\xi$ that satisfy the following. If $p \geq C(\log n/n)^{1/2}$, then a.a.s.~every $G \subset G(n,p)$ with $\delta(G) \geq (2/3+\gamma)np$ contains a subgraph $G' \subset G$ whose vertex set can be partitioned into sets $B$, $S$, and $(W'_i)_{i=1}^{3k}$, where $k \leq D$, such that
  \begin{enumerate}[(i)]
    \setlength{\itemsep}{1pt} \setlength{\parskip}{0pt}
    \setlength{\parsep}{0pt}
  \item
    $|B| \le Dp^{-2}$,
  \item
    $G[S]$ contains a perfect triangle packing,
  \item
    $(W'_{3t-2}, W'_{3t-1}, W'_{3t})$ is a $(\delta, \varepsilon,p)$-strong-super-regular in $G'$ for all $t \in [k]$, and
  \item
    $|W'_{3t-2}| = |W'_{3t-1}| = |W'_{3t}| \ge \xi n$ for all $t \in [k]$.
  \end{enumerate}
\end{THM}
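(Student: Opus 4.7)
The plan is to start from the super-regular decomposition given by Theorem~\ref{thm_cleanstage1} and upgrade it to a strong-super-regular one by absorbing the (few) non-good vertices of each triple into a small triangle packing built from the random matchings provided by Lemma~\ref{lemma_randommatching}. First I would apply Theorem~\ref{thm_cleanstage1} with parameters $\tilde\delta$ and $\tilde\varepsilon$ chosen stronger than the targets $\delta$ and $\varepsilon$, producing $B_0$, $S_0$ and balanced $(\tilde\delta,\tilde\varepsilon,p)$-super-regular triples $(W_{3t-2},W_{3t-1},W_{3t})$ in a subgraph $G''\subset G$ that additionally satisfy the common-neighbourhood bound (v). That common-neighbourhood bound is exactly what makes Proposition~\ref{prop_manygoodvertices} applicable, and so the set $V'_i\subset W_i$ of non-$\tilde\varepsilon'$-good vertices satisfies $|V'_i|\le\tilde\varepsilon'|W_i|$ for any $\tilde\varepsilon'$ of our choice, provided $\tilde\varepsilon$ is shrunk accordingly.

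For each triple I would then apply Lemma~\ref{lemma_randommatching} with a small constant $\eta<1/140$, with $V'_i$ being the non-good vertices, and with $E_{jk}\subset(W_j,W_k)$ defined as the set of ``over-concentrated'' edges whose endpoints have more than $K\tilde\varepsilon'd_{ij}d_{ik}p^2m$ common neighbours in $V'_i$, for a large constant $K$. A double count using the upper bound $4mp^2$ from (v) shows both that $|E_{jk}|\le\eta d_{jk}pm^2$ and that hypothesis (A1) of the lemma holds. The lemma then delivers random matchings $M_{12},M_{13},M_{23}$ whose union is a single matching and which satisfy (M1)--(M4). For each $i\in\{1,2,3\}$ I would build the bipartite auxiliary graph $H_i$ with parts $V'_i$ and $M_{jk}$, where $v\sim e$ iff $v$ together with the two endpoints of $e$ spans a triangle of $G''$: by (M3) every left-vertex has degree at least $(\eta/2)\delta^2p^2m$, whereas the exclusion of $E_{jk}$ caps every right-degree at $K\tilde\varepsilon'd_{ij}d_{ik}p^2m$. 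Choosing $\tilde\varepsilon'$ small enough in terms of $\eta$, $\delta$ and $K$ makes the left-degree dominate, and I would then invoke Proposition~\ref{prop_Hall} with a suitable threshold $L$ to extract a matching of $H_i$ that saturates $V'_i$. Pulling these matchings back and combining over $i$ yields a vertex-disjoint triangle packing $S_1$ covering every non-good vertex; vertex-disjointness is automatic because $M_{12}\cup M_{13}\cup M_{23}$ is itself a matching and the $V'_i$'s lie in different parts.

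Setting $B=B_0$, $S=S_0\cup S_1$ and $W'_i=W_i\setminus V(S_1)$, the triples $(W'_{3t-2},W'_{3t-1},W'_{3t})$ are balanced since each triangle of $S_1$ removes one vertex from each part, and by (M2) every vertex loses only $O(\eta pm)$ neighbours in the removed set, so Proposition~\ref{prop_typicalstability} keeps super-regularity intact with the tighter parameters $(\delta,\varepsilon,p)$. Because matching edges were chosen (via $E_{jk}$) to avoid having too many common neighbours in the $V'_i$'s, the good edges of $G''$ keep almost all of their common neighbours in the restricted third parts---this is where (M4) combines with the $E_{jk}$ bound---so every surviving vertex remains good in the restricted triple, making each $(W'_{3t-2},W'_{3t-1},W'_{3t})$ a $(\delta,\varepsilon,p)$-strong-super-regular triple. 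Conditions (i)--(iv) then follow, with $|B|\le Dp^{-2}$ inherited from Theorem~\ref{thm_cleanstage1}. The main obstacle is verifying Hall's condition for the auxiliary graphs $H_i$: the raw right-degree bound $4mp^2$ coming directly from (v) is too weak to compare with the lower bound $(\eta/2)\delta^2p^2m$ on left-degrees, which is exactly why one must first discard the over-concentrated edges into $E_{jk}$, and balancing the constants $\tilde\varepsilon'$, $\eta$ and $K$ so that the same $E_{jk}$ simultaneously powers the Hall step and the preservation of goodness after restriction is the most delicate part of the proof.
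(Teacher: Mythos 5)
Your high-level plan matches the paper's: start from Theorem~\ref{thm_cleanstage1}, identify the non-good vertices via Proposition~\ref{prop_manygoodvertices}, run Lemma~\ref{lemma_randommatching} to get random matchings, absorb the non-good vertices into a small triangle packing by a Hall argument, and then argue the residual triple is strong-super-regular. Your Hall argument differs from the paper's (you use a left-degree/right-degree comparison, whereas the paper invokes the small-set non-expansion property of $G(n,p)$ from Proposition~\ref{prop_smallexpansion}); that difference would be fine if the rest worked. But there is a genuine gap.

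The gap is the claimed verification of hypothesis (A1) of Lemma~\ref{lemma_randommatching} for your choice of $E_{jk}$ (the ``over-concentrated'' edges). The double count you sketch does control the \emph{total} size $|E_{jk}|$: each $e\in E_{jk}$ contributes $>K\tilde\varepsilon' d_{ij}d_{ik}p^2 m$ to the number of incidences $(v,e)$ with $v\in V'_i$ a common neighbour of the ends of $e$, while each $v\in V'_i$ contributes at most $O(d_{ij}d_{ik}d_{jk}p^3m^2)$, so $|E_{jk}|\lesssim d_{jk}pm^2/K$. But (A1) is a \emph{per-vertex} statement: for every $v\in V'_i$, the neighbourhood of $v$ must contain at most $\eta d_{ij}d_{ik}d_{jk}p^3m^2$ edges of $E_{jk}$. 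The double count only gives the average over $v\in V'_i$ of the number of $E_{jk}$-edges in $N(v)$, and that average is trivially bounded by $|N(v)\cap W_j|\cdot|N(v)\cap W_k|$ anyway; a single $v\in V'_i$ could a priori have essentially all of its $\Theta(d_{ij}d_{ik}d_{jk}p^3m^2)$ neighbourhood edges over-concentrated, which would defeat (A1) and therefore (M3). This is not a pathological worry: it is precisely the non-good vertices --- which is exactly what $V'_i$ is made of --- whose neighbourhoods one expects to be the most badly behaved, and the adversary controlling $G\subset G(n,p)$ can arrange exactly that. Because both your Hall step \emph{and} your goodness-preservation step rely on (M3)/(M4), the whole argument collapses at this point.

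The paper's resolution is structurally different. It does not use over-concentrated edges as $E_{jk}$; it uses $E_{jk}=(X_j,W_k)\cup(W_j,X_k)$, and it first runs an iterative \emph{cleaning-up procedure} that enlarges $X_i$ until every remaining vertex $v\in W_i\setminus X_i$ satisfies both (A) (few neighbours in each $X_j$) and (B) (few neighbourhood edges with many common neighbours in $X_i$). The cleaning-up makes hypothesis (A2) (not (A1)) verifiable for those vertices, and the proof that the procedure actually terminates with $|X_i|\le 3\varepsilon_1 m$ itself requires Propositions~\ref{prop_nontypicalvertices}~(ii) and~\ref{prop_randomgraphproperties2} together with property (v) of Theorem~\ref{thm_cleanstage1}. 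The paper also pulls out a further small set $Y_i\subset X_i$, which it pads to equal sizes and moves into $B$; without this extra set it could neither verify (A2) for the leftover $X_i\setminus Y_i$ vertices nor keep the triple balanced after absorption. Your proposal omits all of this --- the cleaning-up procedure, the verification that it stabilizes, the $Y_i$ carve-out, and the use of Proposition~\ref{prop_smallexpansion} in the Hall step --- and the short-cut it substitutes (over-concentrated-edge $E_{jk}$ together with a bare double count) does not close the gap.
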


\begin{proof}
  Let $\delta = \min\{\delta_{\ref{thm_cleanstage1}}(\gamma)/2, 3/4\}$. Without loss of generality, we may assume that $\varepsilon \leq 2/\delta$. Furthermore, let $\varepsilon_3 = \varepsilon_{\ref{prop_typicalstability}}(\varepsilon, 2\delta)$, $\varepsilon_1 = \min\{\varepsilon\delta^2/1180, \varepsilon_3\delta^2/40\}$, $\varepsilon_2 = \min\{\varepsilon_{\ref{prop_manygoodvertices}}(\varepsilon_1, 2\delta), \varepsilon_{\ref{thm_cleanstage1}}(\gamma), \varepsilon_{\ref{lemma_randommatching}}(\delta), \varepsilon/2\}$, and $\xi = \xi_{\ref{thm_cleanstage1}}(\varepsilon_2)/2$.
Let $\eta = 12\varepsilon_1/\delta^2$.
Moreover, let
\[ D = (6/\xi) \max\{D_{\ref{thm_cleanstage1}(\varepsilon_2)}, D_{\ref{prop_nontypicalvertices} (ii)}(1/4, \varepsilon_1\xi), D_{\ref{prop_randomgraphproperties2}}(\varepsilon_1\varepsilon_2 \delta^3\xi^2/4, 1/4) \} \]
and $C = \max\{ C_{\ref{thm_cleanstage1}}(\varepsilon_2), C_{\ref{prop_smallexpansion}}(3\varepsilon_1\xi) \}$.

  By Theorem~\ref{thm_cleanstage1}, there exists a further subgraph $G'$ of $G$ whose vertex set can be partitioned into sets $B_0$, $S_0$, and $(W_i)_{i=1}^{3k}$ such that $|B_0| \leq (D/2)p^{-2}$, $G[S_0]$ contains a perfect triangle packing, and for all $t \in \{1, \ldots, k\}$, the triple $(W_{3t-2}, W_{3t-1}, W_{3t})$ is  $(2\delta, \varepsilon_2, p)$-super-regular in $G'$ and satisfies $|W_{3t-2}| = |W_{3t-1}| = |W_{3t}|$. Moreover, the endpoints of no edge in $(W_{3t-2}, W_{3t-1})$ have more than $4|W_{3t}|p^2$ common neighbours in $W_{3t}$ (and a similar statement holds for other choices of indices). We will show that each such triple contains a slightly smaller $(\delta, \varepsilon, p)$-strong-super-regular triple in such a way that all but at most $O(p^{-1})$ leftover vertices can be covered by vertex-disjoint triangles. Obviously, this will imply the assertion of the theorem.

  Without loss of generality, we will only consider the triple $(W_1, W_2, W_3)$. For the sake of brevity, let $m = |W_1| = |W_2| = |W_3|$ and note that $m \geq 2\xi n$. Without loss of generality, we can condition on the event that $G(n,p)$ satisfies the assertions of
  \begin{itemize}
  \item
    Proposition~\ref{prop_nontypicalvertices}~(ii) with $\rho = 1/4$ and $\xi = \varepsilon_1m/n$,
  \item
    Proposition~\ref{prop_randomgraphproperties2} with $\varepsilon = \varepsilon_1\varepsilon_2\delta^3m^2/(4n^2)$, and $\rho = 1/4$, and
  \item
    Proposition~\ref{prop_smallexpansion} with $\xi = 6\varepsilon_1m/(2n)$.
  \end{itemize}

  For each $i$, let $X_i \subset W_i$ be the collection of vertices that are not $\varepsilon_1$-good. By Proposition~\ref{prop_manygoodvertices}, $|X_i| \leq \varepsilon_1 m$ and we may assume that
$|X_i| = \varepsilon_1 m$. We perform the following cleaning-up procedure. While constantly updating the sets $X_1$, $X_2$, and $X_3$, repeat the following. If there exists an $i$ and a vertex $v \in W_i \setminus X_i$ such that either
  \begin{enumerate}
  \item[(A)]
    $|N(v) \cap X_j| \geq 4\varepsilon_1pm$ for some $j$ or
  \item[(B)]
    the neighbourhood of $v$ contains more than $\varepsilon_2d_{12}d_{13}d_{23}p^3m^2$ edges whose endpoints have more than $5\varepsilon_1p^2m$ common neighbours in $X_i$,
  \end{enumerate}
  then move $v$ to $X_i$.

  \begin{CLAIM}
    The cleaning-up procedure finishes with $\varepsilon_1 m \le |X_i| \leq 3\varepsilon_1m$ for all $i$.
  \end{CLAIM}
  \begin{proof}
    Suppose that at some point in time, $|X_i| > 3\varepsilon_1m$ for some $i$, and consider the earliest such moment. Without loss of generality, we may assume that $i = 1$. Clearly, $3\varepsilon_1m < |X_1| \leq 4\varepsilon_1m$ and $|X_j| \leq 3\varepsilon_1m$ if $j \neq 1$. Since at the beginning, every $X_1$ contained at most $\varepsilon_1m$ vertices, $W_1$ contains either $\varepsilon_1m$ vertices satisfying (A) or $\varepsilon_1m$ vertices satisfying (B). The former is impossible, since $|X_j| \leq 3\varepsilon_1m$ for $j \neq 1$ and we assumed that $G(n,p)$ satisfies the assertion of Proposition~\ref{prop_nontypicalvertices}~(ii) with $\xi = \varepsilon_1m/n$. Since the endpoints of each edge in $(W_2, W_3)$ have at most $4p^2m$ common neighbours in $W_1$, the latter would imply that $(W_2, W_3)$ contains
    \[ (\varepsilon_1 m) \cdot (\varepsilon_2 d_{12}d_{13}d_{23}p^3 m^2) / (4p^2m) \ge  (\varepsilon_1\varepsilon_2\delta^3/4)pm^2 \]
    edges whose endpoints have more than $5\varepsilon_1p^2m$ common neighbours in $X_1$. Since $|X_1| \leq 4\varepsilon_1m$, this is impossible by our assumption that $G(n,p)$ satisfies the assertion of Proposition~\ref{prop_randomgraphproperties2} with $\varepsilon = \varepsilon_1\varepsilon_2\delta^3m^2/(4n^2)$, and $\rho = 1/4$.
  \end{proof}

  It is not hard to check that $(W_1 \setminus X_1, W_2 \setminus X_2, W_3 \setminus X_3)$ is $(\delta/2, \varepsilon', p)$-strong-super-regular. Unfortunately, this conclusion does not help us at the moment as we first need to absorb $X_1 \cup X_2 \cup X_3$ into vertex-disjoint triangles and in the process of absorbing those vertices, we may use some vertices from the triple $(W_1 \setminus X_1, W_2 \setminus X_2, W_3 \setminus X_3)$.

  For every $i$, let $Y_i \subset X_i$ be the set of vertices in $X_i$ that have more than $4\varepsilon_1pm$ neighbours in $X_j$ for some $j$ with $j \neq i$. Since $|X_j| \leq 3\varepsilon_1m$ and we assumed that $G(n,p)$ satisfies the assertion of Proposition~\ref{prop_nontypicalvertices}~(ii) with $\xi = \varepsilon_1m/n$, then $|Y_i| \leq Dp^{-1}/(6k)$. By adding arbitrary vertices of $X_i$ to $Y_i$, we can guarantee that $|Y_1| = |Y_2| = |Y_3|$. For every $i$ and $j$, let $E_{ij} = (X_i, W_j) \cup (W_i, X_j)$. Since $(W_i, W_j)$ is $(\varepsilon_2, p)$-regular and $|X_i| \ge \varepsilon_1 |W_i| \ge \varepsilon_2 |W_i|$, we have
  \[
  |E_{ij}| \leq (d_{ij}+\varepsilon)p(|X_i||W_j|+|W_i||X_j|) \leq (1+\varepsilon/\delta)6\varepsilon_1d_{ij}pm^2 \leq \eta d_{ij}pm^2.
  \]
  Fix a vertex $v \in W_i \setminus Y_i$. We check that (A2) in Lemma~\ref{lemma_randommatching} is satisfied. Let $(N''_j, N''_k)$ be as in (A2) in Lemma~\ref{lemma_randommatching}. Since $|N''_j \cap X_j| \leq |N(v) \cap X_j| \leq 4\varepsilon_1pm$ and similarly, $|N''_k \cap X_k| \leq 4\varepsilon_1pm$, we have
  \[
  |E_{jk} \cap (N''_j, N''_k)| \leq 4\varepsilon_1pm (2d_{ij}d_{jk}p^2m + 2d_{ik}d_{jk}p^2m) \leq (16\varepsilon_1/\delta) d_{ij}d_{ik}d_{jk}p^3m^2 \leq \eta d_{ij}d_{ik}d_{jk}p^3m^2.
  \]
  Lemma~\ref{lemma_randommatching} implies that a.a.s.~for each $i$ and $j$, there exists an $M_{ij} \subset (W_i \setminus X_i, W_j \setminus X_j)$ such that (M1)--(M4) in Lemma~\ref{lemma_randommatching} are satisfied with $V_i = W_i$ and $V'_i = W_i \setminus Y_i$ for each $i$. Let $Q_i$ be defined as in Lemma~\ref{lemma_randommatching}.

  \begin{CLAIM}
    \label{claim_Xi-Yicover}
    The sets $X_1 \setminus Y_1$, $X_2 \setminus Y_2$, and $X_3 \setminus Y_3$ can be covered by vertex-disjoint triangles that use only vertices in $Q_1 \cup X_1$, $Q_2 \cup X_2$, and $Q_3 \cup X_3$.
  \end{CLAIM}
  \begin{proof}
    Since $M_{12} \cup M_{13} \cup M_{23}$ is a matching whose edges are not incident to any vertex in $X_1 \cup X_2 \cup X_3$, it suffices to show that for each $i$, $j$, and $k$, the vertices of $X_i \setminus Y_i$ can be paired with some $|X_i \setminus Y_i|$ edges of $M_{jk}$ to form vertex-disjoint triangles.

    Let $H$ be the bipartite graph on the vertex set $(X_i \setminus Y_i) \cup M_{jk}$, where a vertex $w \in X_i \setminus Y_i$ is adjacent to an edge $\{u, v\} \in M_{jk}$ if and only if $\{u, v, w\}$ is a triangle in $(W_1, W_2, W_3)$. Clearly, it suffices to prove that $H$ contains a matching that covers $X_i \setminus Y_i$. We check that Hall's condition holds in $H$. Fix an arbitrary non-empty set $S \subset X_i \setminus Y_i$. If $|N_H(S)| \leq |S|$, then there would be an $x$ with $1 \leq x = |S| \leq |X_i| \leq 3\varepsilon_1m$ such that $G(n,p)$ contains some $x$ independent edges and $x$ vertices, each of which is adjacent to both ends of at least $(\eta/2)\delta^2p^2m$ of those edges, see (M3) in Lemma~\ref{lemma_randommatching}. This would contradict our assumption that $G(n,p)$ satisfies the assertion of Proposition~\ref{prop_smallexpansion} with $\xi = \eta\delta^2m/(2n) = 6\varepsilon_1m/(2n)$. Hence, $|N_H(S)| > |S|$ for all non-empty $S \subset X_i \setminus Y_i$.
  \end{proof}

  Fix any such triangle packing and for each $i$, let $X'_i = X_i \cup T_i$, where $T_i \subset Q_i$ is the set of vertices in $W_i \setminus X_i$ that are covered by the triangle packing. Note that $|T_i| = |X_j \setminus Y_j| + |X_k \setminus Y_k|$. Let $W'_i = W_i \setminus X'_i$. Since for each $i$, $|X'_i| = |X_i| + |T_i| = |X_1 \setminus Y_1| + |X_2 \setminus Y_2| + |X_3 \setminus Y_3| + |Y_i|$ and $|Y_1| = |Y_2| = |Y_3|$, the sets $W'_1$, $W'_2$, and $W'_3$ have the same number of elements. Denote this number by $m'$ and note that $m' \geq m - 9\varepsilon_1m \geq m/2 \geq \xi n$.

  \begin{CLAIM}
    \label{claim_strong-super-regular}
    The triple $(W'_1, W'_2, W'_3)$ is $(\varepsilon, \delta, p)$-strong-super-regular.
  \end{CLAIM}
  \begin{proof}
    Since $(W_1, W_2, W_3)$ is $(\varepsilon/2, p)$-regular with density at least $2\delta p$ and $m' \geq m/2$, Proposition~\ref{prop_inheritregularity} implies that $(W'_1, W'_2, W'_3)$ is $(\varepsilon, p)$-regular with density at least $\delta p$. Fix an index $i$, recall that $|X'_i| \leq 9\varepsilon_1m \leq \varepsilon_3m$, and let $v$ be an arbitrary vertex in $W'_i$. Without loss of generality, we may assume that $i = 1$. Since $v \not\in X_1$, (A) implies that $\deg(v,X_j) \leq 4\varepsilon_1pm$ for every $j$. Moreover, (M2) in Lemma~\ref{lemma_randommatching} implies that $\deg(v,Q_j) \leq 3\eta d_{ij}pm$. Hence,
    \[
    \deg(v, X'_j) \leq \deg(v, X_j) + \deg(v, T_j) \leq \deg(v, X_j) + \deg(v, Q_j) \leq \varepsilon_3pm,
    \]
    and by Proposition~\ref{prop_typicalstability}, $v$ becomes $\varepsilon$-typical in $(W'_1, W'_2, W'_3)$. It remains to show that $v$ is also $\varepsilon$-good. Since $v \not\in X_i$, it was $\varepsilon_1$-good in $(W_1, W_2, W_3)$ and it satisfies (B). Hence, the endpoints of all but at most $(\varepsilon_1+\varepsilon_2)d_{12}d_{13}d_{23}p^3m^2$ edges in the neighbourhood of $v$ have at least $(1-\varepsilon_1-5\varepsilon_1/\delta^2)d_{12}d_{13}p^2m$ common neighbours in $W_1 \setminus X_1$.
    Moreover by (M4), they have at most $4\eta d_{12}d_{13}p^2m$ common neighbours in $Q_1$. Since $1 -\varepsilon_1-5\varepsilon_1/\delta^2- 4\eta \geq (1+\varepsilon_2/\delta)^2(1-\varepsilon)$, each such edge is $\varepsilon$-good in the new triple. It follows that $v$ is $\varepsilon$-good.
  \end{proof}

  Finally, let $B = B_0 \cup \bigcup_{i=1}^{3k}Y_i$ and let $S = S_0 \bigcup_{i=1}^{3k}(T_i \cup (X_i \setminus Y_i))$. Clearly, the sets $B$, $S$, and $(W'_i)_{i=1}^{3k}$ partition the vertex set of $G$,
  \[
  |B| \leq |B_0| + \sum_{i=1}^{3k} |Y_i| \leq Dp^{-2}/2 +  3k \cdot Dp^{-1}/(6k) \leq Dp^{-2}/2,
  \]
  and $G[S]$ contains a perfect triangle packing. Finally, by Claim~\ref{claim_strong-super-regular}, for each $t \in [k]$, the triple $(W'_{3t-2}, W'_{3t-1}, W'_{3t})$ is $(\varepsilon, \delta, p)$-strong-super-regular and satisfies $|W'_{3t-2}| = |W'_{3t-1}| = |W'_{3t}| \geq \xi n$.
\end{proof}

\section{Perfect triangle packing in strong-super-regular triples}
\label{section_packingssregular}

In the previous section, we managed to decompose the graph into balanced strong-super-regular triples, a triangle packing, and a small set of exceptional vertices. In this section, we will show how to find a triangle-factor in each of those triples. We start this section by showing how to construct sets of ``buffer'' vertices and edges that will allow us to complete an almost-spanning triangle packing into a triangle-factor.

\begin{LEMMA} \label{lemma_main}
  For all positive constants $\delta$, $\xi$, and $\eta$ with $\eta \leq 1/140$, there exist constants $C(\delta,\eta,\xi)$ and $\varepsilon(\delta,\eta)$ such that if $p \geq C(\log n/n)^{1/2}$, then $G(n,p)$ a.a.s.~satisfies the following. Let $(W_1,W_2,W_3)$ be a $(\delta, \varepsilon, p)$-strong-super-regular triple in a subgraph of $G(n,p)$ such that $|W_1| = |W_2| = |W_3| \geq \xi n$. Then there exist edge sets $M_{12}$, $M_{13}$, $M_{23}$ and vertex sets $X_1$, $X_2$, $X_3$ with the following properties:
  \begin{enumerate}
  \item[(P1)]
    $M_{12} \cup M_{13} \cup M_{23}$ is a matching.
  \item[(P2)]
    For all $j$ and $k$, $M_{jk} \subset (W_j,W_k)$ and $(\eta/2) |W_j| \le |M_{jk}| \le 2\eta |W_j| $.
  \item[(P3)]
    For all $i$, $|X_i| \leq (\eta / 4)|W_i|$ and $X_i \subset W_i \setminus Q_i$, where $Q_i$ is the set of vertices in $W_i$ that are covered by some edge in $M_{ij} \cup M_{ik}$.
  \item[(P4)]
    \label{item_factorZi}
    For all $i$, $j$, and $k$, if $Z_i \subset W_i$ has size $|M_{jk}|$ and contains $X_i$, then the subgraph of $(W_1,W_2,W_3)$ induced by $Z_i$ and $M_{jk}$ contains a triangle-factor.
  \end{enumerate}
\end{LEMMA}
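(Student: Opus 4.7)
The plan is to construct the matchings $M_{ij}$ using Lemma~\ref{lemma_randommatching} and the buffer sets $X_i$ by a random sampling argument, and then to verify property (P4) by checking the hypotheses of Hall's theorem in the form of Proposition~\ref{prop_Hall}.

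I would first apply Lemma~\ref{lemma_randommatching} with $E_{ij}$ equal to the set of edges in $(W_i, W_j)$ that are not $\eta$-good and $V'_i = W_i$ for every $i$. By Proposition~\ref{prop_goodedges}, for $\varepsilon$ sufficiently small (depending on $\delta$ and $\eta$), $|E_{ij}| \leq \eta d_{ij} p m^2$; strong-super-regularity of $(W_1, W_2, W_3)$ ensures that every vertex of $W_i$ is $\varepsilon$-good, so assumption (A1) of Lemma~\ref{lemma_randommatching} is satisfied. This yields matchings $M_{12}, M_{13}, M_{23}$ satisfying conditions (M1)--(M4); in particular, every edge of $M_{jk}$ is $\eta$-good, so by (M4) it has at least $(1 - 4\eta) d_{ij} d_{ik} p^2 m = \Omega(p^2 m)$ common neighbours in $W_i \setminus Q_i$. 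Properties (P1) and (P2) are immediate. I then construct $X_i$ by sampling $\lfloor (\eta/4)|W_i| \rfloor$ vertices uniformly from $W_i \setminus Q_i$; by Chernoff's inequality applied to each edge $e \in M_{jk}$, together with a union bound over the at most $2\eta m$ edges and three triples $(i,j,k)$, with probability $1 - o(1)$ every $e \in M_{jk}$ has at least $L := C_0 p^{-2}$ common neighbours in $X_i$, for a suitable constant $C_0 = C_0(\delta, \eta, \xi)$; the lower bound $p \geq C(\log n/n)^{1/2}$ ensures the concentration since $p^2 m \gg \log m$. Property (P3) then holds by construction.

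To establish (P4), fix any admissible $Z_i \supset X_i$ and consider the bipartite graph $H_i$ on vertex set $Z_i \cup M_{jk}$ in which $z \in Z_i$ is adjacent to $e \in M_{jk}$ precisely when $\{z\} \cup e$ spans a triangle in $(W_1, W_2, W_3)$; a triangle factor corresponds exactly to a perfect matching in $H_i$. I apply Proposition~\ref{prop_Hall} with threshold $L$. Condition~(ii) is immediate: for any nonempty $T \subset M_{jk}$ with $|T| \leq L$, any single edge $e \in T$ already contributes at least $L$ neighbours in $X_i \subset Z_i$, so $|N_{H_i}(T) \cap Z_i| \geq L \geq |T|$. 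For condition~(i), suppose toward a contradiction that $|N_{H_i}(S)| < |S|$ for some $S \subset Z_i$ with $|Z_i \setminus S| \geq L$, and set $T_0 := N_{H_i}(S)$. By (M3) every $v \in S$ is adjacent to both endpoints of at least $(\eta/2)\delta^2 p^2 m \geq (\eta \delta^2 \xi / 2) n p^2$ edges of $M_{jk}$, all of which must lie in $T_0$. Taking $W \subset S$ with $|W| = |T_0|$ and $E := T_0$, the pair $(W, E)$ is of the form forbidden by Proposition~\ref{prop_smallexpansion} (with its parameter set to $\eta \delta^2 \xi / 2$), delivering the required contradiction provided that $|T_0|$ lies in the admissible range of that proposition.

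The main obstacle is handling the regime in which $|T_0|$ exceeds the admissible range of Proposition~\ref{prop_smallexpansion}, i.e.~when $|T_0|$ (and hence $|S|$) is a large fraction of $n$. In that range, $|Z_i \setminus S|$ is small, and one should attempt a dual argument on the $M_{jk}$-side: the complementary edge set $T' := M_{jk} \setminus T_0$ has $|T'| > |M_{jk}| - |S|$ and all of its common neighbours in $Z_i$ are forced to lie in $Z_i \setminus S$. Combining this with the sparsity estimates of Propositions~\ref{prop_randomgraphproperties1}(iii) and~\ref{prop_randomgraphproperties2} should either produce a second forbidden configuration for Proposition~\ref{prop_smallexpansion}, or give an improved lower bound on $|T_0|$ (of order $pm$) via double counting that reduces the problematic range. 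The constants governing $L$, the sampling rate of $X_i$, and the two applications of Proposition~\ref{prop_smallexpansion} must be tuned so that their admissible ranges interlock without leaving any unresolved intermediate regime; this interlocking is where the most delicate bookkeeping of the proof will be concentrated.
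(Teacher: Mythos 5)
Your first paragraph matches the paper: applying Lemma~\ref{lemma_randommatching} with $E_{ij}$ the non-good edges and $V'_i = W_i$, using (A1) via $\varepsilon$-goodness of the vertices, is exactly the paper's construction of the $M_{ij}$, and sampling $X_i$ from $W_i \setminus Q_i$ is essentially the paper's random binomial choice (with probability $\eta/5$). The divergence, and the gap, is in the Hall argument for (P4).

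First, a quantitative error: the common neighbourhood of an edge $e \in M_{jk}$ inside $X_i$ has size $\Theta(p^2 m)$ (by (M4) applied to $\eta$-good edges, then scaled by the sampling rate), which under $p \geq C(\log n/n)^{1/2}$ is $\Theta(\log n)$; this is vastly \emph{smaller} than $p^{-2}$, so you cannot take $L := C_0 p^{-2}$. If you fix this and take $L \asymp p^2 m$, then your argument for Hall condition~(ii) is valid but $L$ is only polylogarithmic, which means condition~(i) must now cover \emph{every} $S$ with $|Z_i\setminus S|\gtrsim p^2m$, i.e.\ essentially all of $Z_i$. This is exactly the regime that Proposition~\ref{prop_smallexpansion} cannot reach (it only forbids expansion up to $\xi n/2$ vertices with $x$ linear in $n$ but with its own constant), and you acknowledge you cannot close it. Your ``dual argument'' and ``double counting'' sketches do not rescue this: applying Proposition~\ref{prop_smallexpansion} to the complementary edge set $T'$ runs into the same range issue and does not produce a forbidden configuration of the required type.

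The missing ingredient is a uniform expansion statement over all constant-fraction subsets. The paper proves (Claim~\ref{claim_largeexpansion}) that with probability $1-o(1)$, for \emph{every} $Y_i\subset W_i$ of size $\beta m$, all but at most $\alpha\eta m$ edges of $M_{jk}$ have both endpoints in $N(y)$ for some $y\in Y_i$. The proof is a union bound over all ${m\choose\beta m}$ such subsets: for each fixed $Y_i$, the triple $(Y_i,W_j,W_k)$ is regular by Proposition~\ref{prop_inheritregularity}, hence typical a.a.s.\ by Proposition~\ref{prop_randomgraphtypical}, and Proposition~\ref{prop_goodedges} shows that all but $O(\alpha)\cdot d_{jk}pm^2$ edges of $(W_j,W_k)$ lie in the joint neighbourhood of some vertex of $Y_i$; Chernoff then bounds how many of the remaining ``unseen'' edges were selected into $M'_{jk}$, and the entropy of the union bound is dominated by the Chernoff exponent once $\beta\log(e/\beta)<\alpha\eta/30$. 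This permits taking $L=\alpha\eta m$ (a constant fraction of $m$) in Proposition~\ref{prop_Hall}; condition~(i) is then split at $|S|=(\eta/4)\delta^2m$ (small $S$ via Proposition~\ref{prop_smallexpansion}, large $S$ via Claim~\ref{claim_largeexpansion}), and condition~(ii) is also handled via Proposition~\ref{prop_smallexpansion} rather than by the ``one edge gives $L$ neighbours'' shortcut. Without this claim, or some equivalent uniform statement over constant-size subsets of $W_i$, the Hall verification does not close.
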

\begin{proof}
  For the sake of brevity, let $m = |W_1| = |W_2| = |W_3|$. Without loss of generality, we may assume that $\delta \leq 1$. Let $\alpha = \delta^2/24$ and let $\beta$ be a positive constant satisfying $\beta \log(e/\beta) < \alpha\eta/30$. Moreover, let $\varepsilon = \min\{\delta/2, \eta/4, \varepsilon_{\ref{lemma_randommatching}}(\delta), \beta \cdot (\varepsilon_0)_{\ref{prop_randomgraphtypical}}(\alpha\delta/16,\delta/2), \alpha\beta\delta/16\}$ and let $\varepsilon' = 4\varepsilon/\delta$. Finally, let $C$ be sufficiently large so that $C(\log n/n)^{1/2} \geq C_{\ref{lemma_randommatching}}(\log m/m)^{1/2}$ and without loss of generality we may assume that $G(n,p)$ satisfies the assertion of Proposition~\ref{prop_smallexpansion} with $\xi = \eta\delta^2m/(2n)$ and $\xi = \eta\delta^2m/(12n)$, and Proposition~\ref{prop_randomgraphtypical} with $\varepsilon'_{\ref{prop_randomgraphtypical}}=\alpha\delta/16$ and $\delta_{\ref{prop_randomgraphtypical}} = \delta/2$.

  For all $i$ and $j$, let $q_{ij} = \eta/(d_{ij}mp)$ and select each $\varepsilon'$-good edge of $(W_i,W_j)$ independently with probability $q_{ij}$. Let $M'_{ij}$ be the set of all selected edges in $(W_i,W_j)$ and let $M_{ij} \subset M'_{ij}$ be the set of all those edges that are not incident to any other selected edge. By Proposition~\ref{prop_goodedges}, $(W_i, W_j)$ contains at most $\eta d_{ij}pm^2$ edges that are not $\varepsilon'$-good. Since each $v \in W_i$ is $\varepsilon$-good, its neighbourhood contains at most $\eta d_{12}d_{13}d_{23}p^3m^2$ edges that are not $\varepsilon'$-good. Therefore, Lemma~\ref{lemma_randommatching} applies with $E_{ij}$ being the set of non-$\varepsilon'$-good edges in $(W_i, W_j)$ and $V_i = V'_i = W_i$.

 \begin{CLAIM}
   \label{claim_largeexpansion}
    With probability $1 - o(1)$, every set $Y_i \subset W_i$ of size $\beta m$ satisfies the following. All but at most $\alpha\eta m$ edges of $M_{jk}$ belong to the neighbourhood of some vertex of $Y_i$.
  \end{CLAIM}
  \begin{proof}
    Fix a $Y_i \subset W_i$ of size $\beta m$. By Proposition \ref{prop_inheritregularity}, the triple $(Y_i, W_j, W_k)$ is $(\varepsilon/\beta, p)$-regular, and the densities of all three of its parts are at least $\delta/2$. Moreover, since we assumed that $G(n,p)$ satisfies the assertion of Proposition~\ref{prop_randomgraphtypical}, $(Y_i, W_j, W_k)$ is $\alpha\delta/16$-typical and by our assumption on $\varepsilon$, it is $(\alpha\delta/16,p)$-regular. By Proposition~\ref{prop_goodedges}, all but at most $(\alpha/2)d_{23}pm^2$ edges between $W_j$ and $W_k$ are $\alpha/2$-good, so in particular all but at most $(\alpha/2)d_{23}pm^2$ edges in $(W_j, W_k)$ belong to the neighbourhood of some vertex in $Y_i$. Hence the expected number of edges chosen among those ``bad'' edges is at most $(\alpha/2)d_{23}pm^2 q_{23} = (\alpha \eta /2) m$. Chernoff's inequality implies that the probability that more than $\alpha\eta m$ of those edges are chosen to $M'_{jk}$ is at most $e^{-\alpha\eta m/30}$. Since there are ${m \choose \beta m}$ $\beta m$-subsets of $W_i$,
    \[
    {m \choose \beta m} \leq \left(\frac{em}{\beta m}\right)^{\beta m} = e^{\beta\log(e/\beta)m},
    \]
    and $\alpha\eta/30 > \beta\log(e/\beta)$, the probability that all sets $Y_i$ have the claimed property is $1 - o(1)$.
  \end{proof}

  Lemma~\ref{lemma_randommatching} and Claim~\ref{claim_largeexpansion} imply that there exist $M_{12}$, $M_{13}$, and $M_{23}$ such that $M_{12} \cup M_{13} \cup M_{23}$ is a matching and for all $i$, $j$, and $k$ (properties 1, 2, and 3 follow from (M1), (M3), and (M4) of Lemma \ref{lemma_randommatching}, respectively, whereas property 4 follows from Claim \ref{claim_largeexpansion}):
 \begin{enumerate}
  \item
    $(\eta/2)m \leq |M_{jk}| \le 2\eta m$,
  \item \label{item_degreeWiMjk}
    the neighbourhood of every vertex in $V_i$ contains at least $(\eta/2)\delta^2p^2m$ edges of $M_{jk}$,
  \item
    the endpoints of each edge of $M_{jk}$ have at least $(1/2)\delta^2p^2m$ common neighbours in $W_i \setminus Q_i$,
  \item \label{item_expandYi}
    for every set $Y_i \subset W_i$ of size $\beta m$, all but at most $\alpha\eta m$ edges of $M_{jk}$ belong to the neighbourhood of some vertex of $Y_i$.
  \end{enumerate}

  Fix any such $M_{12}$, $M_{13}$, and $M_{23}$. Next, for each $i$, let $X_i$ be a random binomial subset of $W_i \setminus Q_i$, where each element is included with probability $\eta/5$. A simple application of Chernoff's inequality combined with Property 3 above shows that if $C$ is sufficiently large, then with probability $1 - o(1)$, for all $i$, $j$, and $k$:
  \begin{enumerate}
    \setcounter{enumi}{4}
  \item \label{item_sizeXi}
    $|X_i| \leq (\eta/4)m$,
  \item \label{item_degreeMjkXi}
    the endpoints of each edge of $M_{jk}$ have at least $(\eta/12)\delta^2p^2m$ common neighbours in $X_i$.
  \end{enumerate}
  Let $X_1$, $X_2$, and $X_3$ be arbitrary sets satisfying \ref{item_sizeXi} and \ref{item_degreeMjkXi} and note that properties (P1)--(P3) are satisfied. It remains to show that (P4) is also satisfied.

  Fix a $Z_i \subset W_i$ of size $|M_{jk}|$ such that $X_i \subset Z_i$. Let $H$ be the bipartite graph on the vertex set $Z_i \cup M_{jk}$, where a vertex $w \in Z_i$ is adjacent to an edge $\{u,v\} \in M_{jk}$ if and only if $\{u,v,w\}$ is a triangle in $(W_1,W_2,W_3)$. Clearly, it suffices to prove that $H$ contains a perfect matching. We check that $H$ satisfies the assumptions of Proposition \ref{prop_Hall} with $A = Z_i$, $B = M_{jk}$, and $L = \alpha\eta m$.

  Fix an $S \subset Z_i$. If $0 < |S| \leq (\eta/4)\delta^2m$, then $|N_H(S)| > |S|$ or otherwise there would be an $x \in [1, (\eta/4)\delta^2m]$ such that $G(n,p)$ contains some $x$ independent edges and $x$ vertices, each of which is adjacent to both ends of at least $(\eta/2)\delta^2p^2m$ of those edges, see~\ref{item_degreeWiMjk}. This would contradict our assumption that $G(n,p)$ satisfies the assertion of Proposition~\ref{prop_smallexpansion} with $\xi = \eta\delta^2m/(2n)$. On the other hand, if $|S| \geq (\eta/4)\delta^2m \geq \beta m$, then by \ref{item_expandYi}, $|M_{jk} \setminus N_H(S)| \leq \alpha\eta m$. Hence, $|N_H(S)| \geq |S|$ as long as $|Z_i \setminus S| \geq \alpha\eta m$

  Finally, fix a $T \subset M_{jk}$ with $0 < |T| \leq \alpha\eta m = (\eta/24)\delta^2m$. If $|N_H(T)| < |T|$, then there would be an $x \in [1,(\eta/24)\delta^2m]$ such that $G(n,p)$ contains $x$ vertices and $x$ independent edges whose endpoints have at least $(\eta/12)\delta^2p^2m$ common neighbours among those $x$ vertices (recall that $X_i \subset Z_i$), see~\ref{item_degreeMjkXi}. This would contradict our assumption that $G(n,p)$ satisfies the assertion of Proposition~\ref{prop_smallexpansion} with $\xi = \eta\delta^2m/(12n)$.
\end{proof}

With Lemma~\ref{lemma_main} at hand, without much effort we can prove
Theorem \ref{thm_triangleblowup} which says that a balanced strong-super-regular
triple has a triangle-factor.

\begin{proof}[Proof of Theorem \ref{thm_triangleblowup}.]
Let $\eta=1/140$, $\varepsilon_1 =
\varepsilon_{\ref{prop_findtriangle}}(3\eta/2, \delta/2)$, and
$\varepsilon=(\eta/4)\min\{\varepsilon_{\ref{lemma_main}}(\delta,
\eta), \varepsilon_1\}$. Let $C =  C_{\ref{lemma_main}}(\delta,
\eta, \xi)$.
Let $(W_1, W_2, W_3)$ be a $(\delta, \varepsilon,
p)$-strong-super-regular triple and $m = |W_1| = |W_2| = |W_3|$. By
Lemma~\ref{lemma_main}, there exists a matching $M_{12}, M_{13},
M_{23}$ and sets $Q_1, Q_2, Q_3$ and $X_1, X_2, X_3$ satisfying
(P1), (P2), (P3), and (P4). Let $W_i' = W_i \setminus (Q_i \cup X_i)$
and note that
\[ |W_1'| = |W_1| - |Q_1| - |X_1| = |W_1| - |M_{12}| - |M_{13}| - |X_1|. \]
Let $x = m - |M_{12}| - |M_{13}| - |M_{23}|$ and note that $(1 - 6\eta)m < x < (1 - 3\eta/2)m$.
By applying Proposition~\ref{prop_findtriangle}, we can find $x$ vertex-disjoint
triangles inside the triple $(W_1', W_2', W_3')$.

Note that the remaining $|W_1'| - x = |M_{23}| - |X_1|$ vertices in $W_1'$ together with the set $X_1$, can be matched with the set $M_{23}$ to construct $|M_{23}|$ vertex-disjoint triangles, by property (P4). Similarly, the remaining vertices in $W_2' \cup X_2$ and $W_3' \cup X_3$ can be
matched with $M_{13}$ and $M_{12}$, respectively. Therefore, we have found a perfect triangle packing
of $(W_1, W_2, W_3)$.
\end{proof}

Finally, we briefly summarize Sections~\ref{section_cleanup1}, \ref{section_cleanup2}, and \ref{section_packingssregular} in the proof of our main result, Theorem~\ref{thm_main}.

\begin{proof}[Proof of Theorem~\ref{thm_main}]
  Let $\delta = \delta_{\ref{thm_decomposition}}(\gamma)$, $\varepsilon = \varepsilon_{\ref{thm_triangleblowup}}(\delta)$, $\xi = \xi_{\ref{thm_decomposition}}(\delta,\varepsilon)$ and $C=\max\{C_{\ref{thm_decomposition}}(\varepsilon),C_{\ref{thm_triangleblowup}}(\varepsilon, \xi)\},  D=D_{\ref{thm_decomposition}}(\varepsilon)$.
By Theorem \ref{thm_decomposition}, there exist set $B$, $S$, and $(W_i)_{i=1}^{3k}$ which
satisfies (i) - (iv) of Theorem \ref{thm_decomposition}. Furthermore for each $t \in [k]$, by Theorem \ref{thm_triangleblowup},
each $(\delta,\varepsilon,p)$-strong-super-regular triple $(W_{3t-2}, W_{3t-1}, W_{3t})$ contains
a perfect triangle packing. Therefore all the vertices except $B$ can be covered by
vertex-disjoint triangles. Since $|B| \le Dp^{-2}$, this completes the proof.
\end{proof}

\section{Concluding Remarks}
\label{section_concluding-remarks}

An immediate question we would like to ask is whether the assumption on $p$ in Theorem~\ref{thm_main} can be relaxed. Even though our argument breaks down (for a few reasons) if $p \ll (\log n/n)^{1/2}$, we believe that the conclusion of Theorem~\ref{thm_main} still holds under the (weaker) assumption that $p \gg n^{-1/2}$. If this was true, it would completely resolve the problem of determining the local resilience of $G(n,p)$ with respect to the property of containing an almost spanning triangle packing.

We also believe that a similar argument can be used to obtain an extension of the theorem of Hajnal and Szemer{\'e}di~\cite{HaSz} for larger cliques to the setting of sparse random graphs. Clearly, the edge probability $p$ would have to be sufficiently large so that a corresponding form of Lemma~\ref{lemma_typicalregulartriple} holds. However, in our opinion, the importance of such a result does not justify the technical complications one would have to face in order to prove it.

The more intriguing and interesting question comes from the attempt to embed general spanning or almost spanning
graphs (by general we mean graphs that are not disjoint unions of a fixed graph) into sparse regular pairs.
This gives rise to the following question.

\begin{QUES}
  Can we develop an embedding lemma for general graphs into
  regular pairs in random graphs for some $p=n^{-o(1)}$? How
  should the definition of strong-super-regularity be extended?
\end{QUES}

It is quite likely that such an embedding lemma will provide another proof of the
theorem of B\"{o}ttcher, Kohayakawa, and Taraz~\cite{BoKoTa} on embedding almost spanning subgraphs.
However, one can hope for a better result where the graph we want to embed is smaller than
the host graph by a sublinear number of vertices.
To achieve this, one will most likely need to develop a tool similar to that of Theorem~\ref{thm_decomposition}.

Another question can be asked regarding embedding of spanning subgraphs.
Proposition~\ref{prop_removetriangles} shows that as many as $\Omega(p^{-2})$ vertices
have to be left out from the largest triangle packing. More generally, if every vertex
of some graph $H$ is contained in a triangle, then we cannot hope to embed $H$ into
a sparse host graph of the same order. However, this is no longer the case when $H$ is
bipartite. Thus we recall the following question posed by B{\"o}ttcher, Kohayakawa,
and Taraz~\cite{BoKoTa09}.

\begin{QUES}
  Is it possible to have a perfect embedding for bipartite graphs?
\end{QUES}

In fact, it might be true that what actually matters is not that the graph is bipartite,
but the fact that there are enough vertices which are not contained in a copy of a triangle.
See~\cite{HuLeSu}, where such a result is proved for dense graphs.

\medskip

\noindent {\bf Acknowledgements.}
We are indebted to the anonymous referee for their extremely careful reading of the paper and many helpful comments and suggestions.
The bulk of this work was done when the second and the third authors were visiting the first author in the Department of Mathematics at the University of California, San Diego.

\bibliography{TrianglePacking}{}
\bibliographystyle{amsplain}

\end{document}